\providecommand{\U}[1]{\protect\rule{.1in}{.1in}}
\newtheorem{theorem}{Theorem}
\newtheorem{corollary}[theorem]{Corollary}
\newtheorem{definition}[theorem]{Definition}
\newtheorem{example}[theorem]{Example}
\newtheorem{lemma}[theorem]{Lemma}
\newtheorem{proposition}[theorem]{Proposition}
\newtheorem{remark}[theorem]{Remark}
\newenvironment{proof}[1][Proof]{\noindent\textbf{#1.} }{\ \rule{0.5em}{0.5em}}
\newcommand\footnoteref[1]{\protected@xdef\@thefnmark{\ref{#1}}\@footnotemark}
\begin{document}

\title{A Piecewise Deterministic Markov Toy Model for Traffic/Maintenance and
Associated Hamilton-Jacobi Integrodifferential Systems on Networks}
\author{Dan Goreac\thanks{Universit\'{e} Paris-Est, LAMA, UMR8050, 5, boulevard
Descartes, Cit\'{e} Descartes, Champs-sur-Marne, 77454 Marne-la-Vall\'{e}e,
France} \thanks{Corresponding author, Email : Dan.Goreac@univ-mlv.fr, Tel. :
+33 (0)1 60 95 75 27, Fax : +33 (0)1 60 95 75 45}
\thanks{\textbf{Acknowledgement.} The work of the first author has been
partially supported by he French National Research Agency project PIECE,
number \textbf{ANR-12-JS01-0006.}}, Magdalena Kobylanski\footnotemark[1]
\thanks{Email : Magdalena.Kobylanski@univ-mlv.fr}, Miguel
Martinez\footnotemark[1] \thanks{Email : Miguel.Martinez@univ-mlv.fr}
\and \date{}}
\maketitle

\begin{abstract}
We study optimal control problems in infinite horizon when the dynamics belong
to a specific class of piecewise deterministic Markov processes constrained to
star-shaped networks (corresponding to a toy traffic model). We adapt the
results in \cite{Soner86_2} to prove the regularity of the value function and
the dynamic programming principle. Extending the networks and Krylov's
"shaking the coefficients" method, we prove that the value function can be
seen as the solution to a linearized optimization problem set on a convenient
set of probability measures. The approach relies entirely on viscosity
arguments. As a by-product, the dual formulation guarantees that the value
function is the pointwise supremum over regular subsolutions of the associated
Hamilton-Jacobi integrodifferential system. This ensures that the value
function satisfies Perron's preconization for the (unique) candidate to
viscosity solution.

\end{abstract}

\textbf{Mathematics Subject Classification. } 49L25, 93E20, 60J25, 49L20

\textbf{Acknowledgement.} The authors would like to thank the anonymous
referees for constructive remarks allowing to improve the manuscript.

\section{Introduction}

This paper aims at the study of optimal control problems in infinite horizon
when the dynamics belong to a specific class of piecewise deterministic Markov
processes constrained to networks. The starting point is a toy model inspired
by traffic. Our point of view is the one of a traffic regulator who observes
the generic traffic $X_{\cdot}$ and has the possibility to intervene in the
regulation by imposing speed limits via some (external) control. In this basic
model, the generic vehicle should remain on some star-shaped network
containing several edges bound to a common intersection. At the same time as
the traffic, the regulator should ensure the maintenance of the network by
observing a second (pure jump) component $\Gamma_{\cdot}$ (known as mode). The
functionality of the network evolves stochastically and damage to a specific
edge occurs exponentially distributed with a parameter $\lambda\left(
X,\Gamma,\alpha\right)  $ depending on the traffic, on the previous state of
the network and on regulator's control policy $\alpha$. In this context of
controlled switched Piecewise Deterministic Markov Processes (PDMP), the
regulator seeks to minimize its (discounted) operating cost
\[
v^{\delta}\left(  x,\gamma\right)  :=\inf_{\alpha,X_{\cdot}^{x,\gamma,\alpha
}\in network}\mathbb{E}\left[  \int_{0}^{\infty}e^{-\delta t}l_{\Gamma
_{t}^{x,\gamma,\alpha}}\left(  X_{t}^{x,\gamma,\alpha},\alpha_{t}\right)
dt\right]  .
\]
In this paper, we study the Hamilton-Jacobi integrodifferential systems on
networks associated to the previous control problem.

To our best knowledge, for deterministic dynamics, the constrained optimal
control problem with continuous cost was studied for the first time in
\cite{Soner86_1} (see also \cite{Soner86_2} for a stochastic framework). The
value function of an infinite horizon control problem with space constraints
was characterized as a continuous solution to a corresponding
Hamilton--Jacobi--Bellman equation. For discontinuous cost functionals, the
deterministic control problem with state constraints was studied in
\cite{frankowska_plaskacz_98}, \cite{frankowska_vinter_98},
\cite{plaskacz_quincampoix_00} using viability theory tools. However, the
results of these papers do not directly apply to (deterministic) control
problems on star-shaped networks. Several very recent results are available on
this subject when dealing with deterministic systems (cf.
\cite{Schieborn:2013aa}, \cite{AchdouCamilliCutri2013},
\cite{Imbert_Monneau_Zidani}, \cite{CamilliMarchi2013},
\cite{AchdouOudetTchou2013}, \cite{ImbertMoneau2015_Flux}). The cited papers
rely either on Bellman's approach or on Perron's method for the existence of
solutions of the associated Hamilton-Jacobi equation and propose several
methods for the uniqueness part. There is also an increasing literature on
problems inspired by stratified domains or interfaces and discontinuities that
partly share the same difficulties (e.g. \cite{Bresssan_Hong_2007},
\cite{Barnard_Wolenski_2013}, \cite{rao_siconolfi_zidani},
\cite{BarlesBrianiChasseigne2013}, \cite{BarlesBrianiChasseigneSIAM2014}).

Our control problem is governed by a switch PDMP with characteristic triple
$\left(  f,\lambda,Q\right)  $ (cf. \cite{davis_93}, see also Section
\ref{constr+ass} for the explicit construction). A switch process is often
used to model various aspects in biology (see \cite{cook_gerber_tapscott_98},
\cite{crudu_debussche_radulescu_09}, \cite{Wainrib_Thieullen_Pakdaman_2010},
\cite{crudu_Debussche_Muller_Radulescu_2012}, \cite{G8}), reliability or
storage (in \cite{Boxma_Kaspi_Kella_Perry_2005},
\cite{DufourDutuitGonzalesZhang2008}), finance (in \cite{Rolski_Schmidli_2009}%
), communication networks (\cite{graham2009},
\cite{Bardet_Christen_Guillin_Malrieu_Zitt_2012}). We proceed as follows. In
the first part, we prove that $v^{\delta}$ satisfies, in some generalized
viscosity sense the associated Hamilton-Jacobi integrodifferential equation.
As in the deterministic counterpart, we use Bellman's approach. We begin
Section \ref{Section_DPP_reg} with proving the regularity of the deterministic
value function and the dynamic programming principle (DPP) for this case. For
available (active) roads, the controllability assumptions are the same as
those in \cite{AchdouCamilliCutri2013}. However, entering inactive roads from
intersection should be prohibited and other assumptions must be made for this
case in order to guarantee the uniform continuity of the value function. Next,
we iterate the value functions and the DPP between jumps to prove the uniform
continuity of the (stochastic) value function and the DPP. As a by-product, we
prove that the value function satisfies in a (relaxed) viscosity sense the
associated Hamilton-Jacobi integrodifferential system (in Section
\ref{Section_Existence}).

We then focus on a different notion of uniqueness (in Section
\ref{Section_Uniqueness}): The well-known method of Perron consists in
proposing the supremum over regular subsolution as candidate to the viscosity
solution. Using this intuition, we proceed backward and prove that the value
function given in the previous section is the pointwise supremum over such
regular subsolutions (with a slightly modified notion). The major argument in
proving this result is to extend the intersection with some additional
directions and impose convenient extensions of the dynamics. Then, we adapt
Krylov's "shaking the coefficients" method (cf. \cite{krylov_00},
\cite{barles_jakobsen_02}) to exhibit a sequence of regular subsolutions of
our Hamilton-Jacobi system converging to the initial control problem. These
arguments allow the linearization of the value function. It is shown (in
Theorem \ref{TH_Lin_Network}) that the value function can be interpreted in
connection to an optimization problem set on a family of convenient
probability measures. This family is completely described by the Dynkin
operator of our process. Moreover, the dual value allows one to state that the
initial value function is, indeed, the pointwise supremum over regular subsolutions.

The paper is organized as follows. In Section \ref{constr+ass}, we recall the
basic construction of piecewise deterministic Markov switch processes and give
the main assumptions on the dynamics. We present our traffic model and
introduce the different types of admissible controls and the controllability
assumptions in Section 3. Section \ref{Section_DPP_reg} is dedicated to the
study of regularity of the value function and the dynamic programming
principles. The basic ingredient is the technical projection Lemma
\ref{Projection_Lemma} allowing to prove the uniform continuity of the value
function in the deterministic setting (in Theorem \ref{Th_Cont_0}). We proceed
as in \cite{Soner86_2} by iterating the value function and the dynamic
programming principle. In Section \ref{Section_Existence}, we introduce a
sequential relaxation of the dynamics and prove that the regular value
function exhibited before satisfies, in some generalized viscosity sense, the
associated Hamilton-Jacobi intergrodifferential system. Section
\ref{Section_Uniqueness} is dedicated to the linearization of our value
function. We begin with extending the graph and the dynamics by mirroring the
trajectories in the inactive case and using the inertia otherwise. We briefly
present the adaptation of Krylov's "shaking the coefficients" method and
exhibit a family of regular subsolutions converging to the initial value
function (in Theorem \ref{ThConvergence}). The main ingredients in proving the
convergence are successive projection arguments given by Lemmas
\ref{Projection_Lemma_eps} and \ref{Projection_Lemma_eps_Partii} (whose proofs
are postponed to the Appendix). The main result (Theorem \ref{TH_Lin_Network})
shows that the value function can be interpreted in connection to an
optimization problem set on a family of convenient probability measures.
Moreover, the dual of this problem allows one to characterize the value as the
pointwise supremum over regular subsolutions (as predicted by Perron's method).

\section{Standard construction of controlled switched PDMPs\label{constr+ass}}

We consider $A$ \ (the control space) to be a compact subspace of a metric
space $%
\mathbb{R}
^{d}$ and $%
\mathbb{R}
^{m}$ be the state space, for some $d,m\geq1.$ Moreover, we consider a finite
set $E.$

We summarize the construction of controlled piecewise deterministic Markov
processes (PDMP) of switch type (cf. \cite{Davis_84}, \cite{Davis_86},
\cite{davis_93}) having as characteristic triple $f_{\gamma}:%
\mathbb{R}
^{m}\times A\longrightarrow%
\mathbb{R}
^{m},$ for all $\gamma\in E,$ $\lambda:%
\mathbb{R}
^{m}\times E\times A\longrightarrow%
\mathbb{R}
_{+}$ and $Q:%
\mathbb{R}
^{m}\times E^{2}\times A\longrightarrow\left[  0,1\right]  .$ These functions
are assumed to satisfy some usual continuity conditions (to be made precise at
the end of the section). The switch PDMP is constructed on a space $\left(
\Omega,\mathcal{F},\mathbb{P}\right)  $ allowing to consider a sequence of
independent, $\left[  0,1\right]  $ uniformly distributed random variables
(e.g. the Hilbert cube starting from $\left[  0,1\right]  $ endowed with its
Lebesgue measurable sets and the Lebesgue measure for coordinate, see
\cite[Section 23]{davis_93}). We let $\mathbb{L}^{0}\left(
\mathbb{R}
_{+}\times%
\mathbb{R}
^{m}\times E;A\right)  $ denote the space of $A$-valued Borel measurable
functions defined on $%
\mathbb{R}
^{m}\times E\times%
\mathbb{R}
_{+}$. Whenever $\alpha_{1}\in\mathbb{L}^{0}\left(
\mathbb{R}
_{+}\times%
\mathbb{R}
^{m}\times E;A\right)  $ and $\left(  t_{0},x_{0},\gamma_{0}\right)  \in%
\mathbb{R}
_{+}\times%
\mathbb{R}
^{m}\times E,$ we consider the ordinary differential equation%
\[
\left\{
\begin{array}
[c]{l}%
dy_{\gamma_{0}}\left(  t;t_{0},x_{0},\alpha_{1}\right)  =f_{\gamma_{0}}\left(
y_{\gamma_{0}}\left(  t;t_{0},x_{0},\alpha_{1}\right)  ,\alpha_{1}\left(
t-t_{0};x_{0},\gamma_{0}\right)  \right)  dt,\text{ }t\geq t_{0},\\
y_{\gamma_{0}}\left(  t_{0};t_{0},x_{0};\alpha_{1}\right)  =x_{0.}%
\end{array}
\right.
\]
For the sake of simplicity, whenever $t_{0}=0,$ we denote by $y_{\gamma_{0}%
}\left(  t;x_{0},\alpha_{1}\right)  $ the solution of the previous ordinary
differential equation such that $y_{\gamma_{0}}\left(  0;x_{0},\alpha
_{1}\right)  =x_{0}$.

We pick the first jump time $\tau_{1}$ such that the jump rate is
$\lambda\left(  y_{\gamma_{0}}\left(  t;x_{0},\alpha_{1}\right)  ,\gamma
_{0},\alpha_{1}\left(  t;x_{0},\gamma_{0}\right)  \right)  $ i.e.%
\[
\mathbb{P}\left(  \tau_{1}\geq t\text{ }/\text{ }y_{\gamma_{0}}\left(
0;x_{0};\alpha_{1}\right)  =x_{0}\right)  =\exp\left(  -\int_{0}^{t}%
\lambda\left(  y_{\gamma_{0}}\left(  s;x_{0},\alpha_{1}\right)  ,\gamma
_{0},\alpha_{1}\left(  s;x_{0},\gamma_{0}\right)  \right)  ds\right)  .
\]
The controlled piecewise deterministic Markov processes (PDMP) is defined by
\[
\left(  X_{t}^{x_{0},\gamma_{0},\alpha},\Gamma_{t}^{x_{0},\gamma_{0},\alpha
}\right)  =\left(  y_{\gamma_{0}}\left(  t;x_{0},\alpha_{1}\right)
,\gamma_{0}\right)  ,\text{ if }t\in\left[  0,\tau_{1}\right)  .
\]
The post-jump location is denoted by $\left(  Y_{1},\Upsilon_{1}\right)  .$
Since we deal with continuous switching, $Y_{1}=y_{\gamma_{0}}\left(  \tau
_{1};x_{0},\alpha_{1}\right)  $ and $\Upsilon_{1}$ is a random variable who
has $Q\left(  y_{\gamma_{0}}\left(  \tau;x_{0},\alpha\right)  ,\gamma
_{0},\alpha_{1}\left(  \tau,x_{0},\gamma_{0}\right)  ,\cdot\right)  $ as
conditional distribution given $\tau_{1}=\tau.$ Starting from $\left(
Y_{1},\Upsilon_{1}\right)  $ at time $\tau_{1}$, we select the inter-jump time
$\tau_{2}-\tau_{1}$ such that
\[
\mathbb{P}\left(  \tau_{2}-\tau_{1}\geq t\text{ }/\text{ }\tau_{1},\left(
Y_{1},\Upsilon_{1}\right)  \right)  =\exp\left(  -\int_{\tau_{1}}^{\tau_{1}%
+t}\lambda\left(  y_{\Upsilon_{1}}\left(  s;\tau_{1},Y_{1},\alpha_{2}\right)
,\Upsilon_{1},\alpha_{2}\left(  s-\tau_{1};Y_{1},\Upsilon_{1}\right)  \right)
ds\right)  ,
\]
where $\alpha_{2}\in\mathbb{L}^{0}\left(
\mathbb{R}
_{+}\times%
\mathbb{R}
^{m}\times E;A\right)  $. We set
\[
\left(  X_{t}^{x_{0},\gamma_{0},\alpha},\Gamma_{t}^{x_{0},\gamma_{0},\alpha
}\right)  =\left(  y_{\Upsilon_{1}}\left(  t;\tau_{1},Y_{1},\alpha_{2}\right)
,\Upsilon_{1}\right)  ,\text{ if }t\in\left[  \tau_{1},\tau_{2}\right)  .
\]
The post-jump location $\left(  Y_{2},\Upsilon_{2}\right)  $ satisfies%
\[
\mathbb{P}\left(  \left(  Y_{2},\Upsilon_{2}\right)  \in\mathcal{Y\times
E}\text{ }/\text{ }\tau_{2},\tau_{1},Y_{1},\Upsilon_{1}\right)  =\mathbf{1}%
_{y_{\Upsilon_{1}}\left(  \tau_{2};\tau_{1},Y_{1},\alpha_{2}\right)
\in\mathcal{Y}}Q\left(  y_{\Upsilon_{1}}\left(  \tau_{2};\tau_{1},Y_{1}%
,\alpha_{2}\right)  ,\Upsilon_{1},\mathcal{E},\alpha_{2}\left(  \tau_{2}%
-\tau_{1};Y_{1},\Upsilon_{1}\right)  \right)  ,
\]
for all Borel sets $\mathcal{Y}\subset%
\mathbb{R}
^{m}$ and $\mathcal{E\subset}E.$ (Of course, the set $E$ is endowed with the
discrete topology.) And so on.

Throughout the paper, unless stated otherwise, we assume the following:

(A1) The functions $f_{\gamma}:%
\mathbb{R}
^{m}\times A\longrightarrow%
\mathbb{R}
^{m}$ are uniformly continuous on $%
\mathbb{R}
^{m}\times A$ and there exists a positive real constant $C>0$ such that
\begin{equation}
\left\langle f_{\gamma}\left(  x,a\right)  -f_{\gamma}\left(  y,a\right)
,x-y\right\rangle \leq C\left\vert x-y\right\vert ^{2},\text{ and }\left\vert
f_{\gamma}\left(  x,a\right)  \right\vert \leq C, \tag{A1}\label{A1}%
\end{equation}
for all $x,y\in%
\mathbb{R}
^{m}$ and all $a\in A.$

(A2) The function $\lambda:%
\mathbb{R}
^{m}\times E\times A\longrightarrow%
\mathbb{R}
_{+}$ is uniformly continuous on $%
\mathbb{R}
^{m}\times\left\{  \gamma\right\}  \times A$ and there exists a positive real
constant $C>0$ such that
\begin{equation}
\left\vert \lambda\left(  x,\gamma,a\right)  -\lambda\left(  y,\gamma
,a\right)  \right\vert \leq C\left\vert x-y\right\vert ,\text{ and }%
\lambda\left(  x,\gamma,a\right)  \leq C, \tag{A2}\label{A2}%
\end{equation}
for all $x,y\in%
\mathbb{R}
^{m},$ all $\gamma\in E$ and all $a\in A.$

(A3) The function $Q:%
\mathbb{R}
^{m}\times E^{2}\times A\longrightarrow\left[  0,1\right]  $ is a stochastic
matrix : i.e. $%
{\textstyle\sum\limits_{\gamma^{\prime}\in E}}
Q\left(  x,\gamma,\gamma^{\prime},a\right)  =1,$ for all $\gamma\in E$ and all
$\left(  x,a\right)  \in%
\mathbb{R}
^{m}\times A.$ Moreover, we assume that $Q\left(  x,\gamma,\gamma,a\right)
=0,$ for all $\gamma\in E$ and that there exists some positive real constant
$C>0$ such that
\begin{equation}
\sup_{\substack{a\in A\\\gamma,\gamma^{\prime}\in E}}\left\vert Q\left(
x,\gamma,\gamma^{\prime},a\right)  -Q\left(  y,\gamma,\gamma^{\prime
},a\right)  \right\vert \leq C\left\vert x-y\right\vert . \tag{A3}\label{A3}%
\end{equation}

(A4) The cost functions $l_{\gamma}:%
\mathbb{R}
^{m}\times A\longrightarrow%
\mathbb{R}
$ are uniformly continuous on $%
\mathbb{R}
^{m}\times A$ and there exists a positive real constant $C>0$ such that
\begin{equation}
\left\vert l_{\gamma}\left(  x,a\right)  -l_{\gamma}\left(  y,a\right)
\right\vert \leq C\left\vert x-y\right\vert ,\text{ and }\left\vert l_{\gamma
}\left(  x,a\right)  \right\vert \leq C, \tag{A4}\label{A4}%
\end{equation}
for all $x,y\in%
\mathbb{R}
^{m}$ and all $a\in A.$

\begin{remark}
(i) The assumptions (\textbf{A1-A4}) are quite standard when dealing with
viscosity theory in PDMP. They appear under this form in \cite{Soner86_2} and
are needed to infer the uniform continuity of the value function.

(ii) We have chosen this presentation in order to emphasize the continuity of
the $X$ component (continuous switch). Readers who are familiar with the
construction in \cite{Soner86_2}, may skip this subsection and just think of a
characteristic triple
\begin{align*}
\overline{f} &  :%
\mathbb{R}
^{m+d}\times A\longrightarrow%
\mathbb{R}
^{m+d},\text{ }\overline{f}\left(  \left(  x,\gamma\right)  ,a\right)
=\left(  f_{\gamma}\left(  x,a\right)  ,0_{%
\mathbb{R}
^{d}}\right)  ,\text{ }\overline{\lambda}=\lambda\\
\overline{Q} &  :%
\mathbb{R}
^{m+d}\times A\rightarrow\mathcal{P}\left(
\mathbb{R}
^{m+d}\right)  ,\text{ }\overline{Q}\left(  \left(  x,\gamma\right)
,a,dy,d\theta\right)  =\delta_{x}\left(  dy\right)  Q\left(  x,\gamma
,d\theta\right)  .
\end{align*}
Here, $\mathcal{P}\left(
\mathbb{R}
^{m+d}\right)  $ stands for the family of probability measures on $%
\mathbb{R}
^{m+d}.$
\end{remark}

\section{A traffic problem}

We consider a traffic problem on a network given by \ :

- a family of vertices $\left(  e_{j}\right)  _{j=1,2,...,N},$ for some $N\in%
\mathbb{N}
^{\ast}\smallsetminus\left\{  1\right\}  ,$

- a central intersection denoted by $O.$%

\[%
{\parbox[b]{2.1811in}{\begin{center}
\includegraphics[
height=1.9052in,
width=2.1811in
]%
{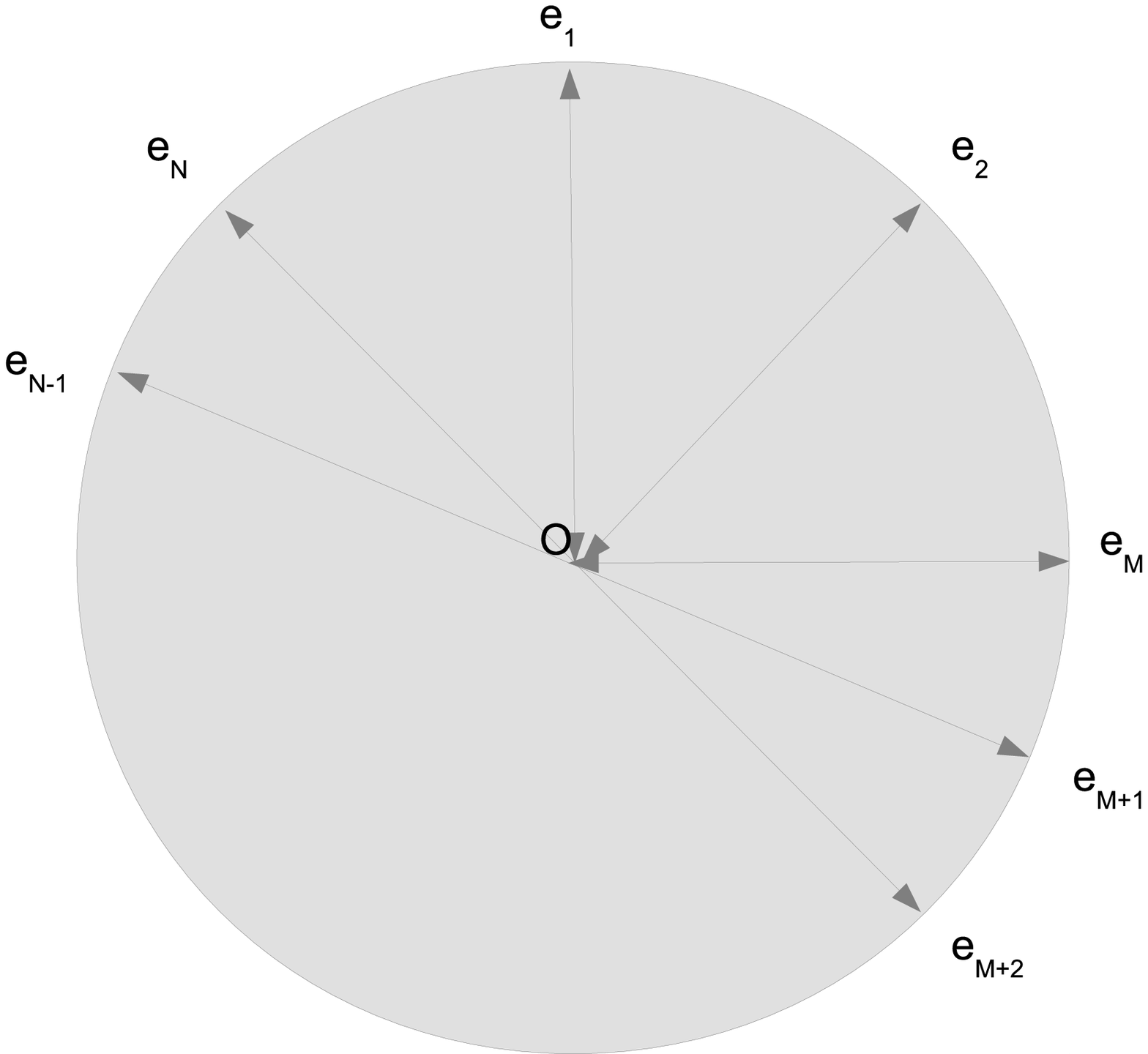}%
\\
Fig 1. Original intersection
\end{center}}}%
\]

We let $J_{j}:=\left(  0,1\right)  e_{j},$ for all $j=1,2,...,N,$
$\mathcal{G}:\mathcal{=}\underset{j=1,2,...,N}{\mathcal{\cup}}\left[
0,1\right)  e_{j}$ and $\overline{\mathcal{G}}:\mathcal{=}%
\underset{j=1,2,...,N}{\mathcal{\cup}}\left[  0,1\right]  e_{j}$.

Our point of view is the one of a traffic regulator who observes the generic
traffic and has the possibility to intervene in the regulation by imposing
speed limits via some (external control). Given an initial point
$x\in\overline{\mathcal{G}}$, the generic vehicle will move (in a continuous
trajectory $X_{t}$) on $\overline{\mathcal{G}}$. At the same time as the
actual traffic, the regulator observes the quality of the road ($\Gamma_{t}$)
and distinguishes between roads which are functional (active) and those which
need repairing (inactive). For functional roads, speeding up the traffic at
the intersection in both directions is possible. In the inactive case, the
road needs repairing and the vehicles that have just entered the road are
directed to the junction. We emphasize that we have a simplified toy model in
which the $x$ component stands for the position of a generic vehicle (in
opposition with the usual density component in traffic models).

This leads to controlled switch PDMP dynamics $\left(  X_{t}^{x,\gamma,\alpha
},\Gamma_{t}^{x,\gamma,\alpha}\right)  $ governed by the speed of the vehicle
$f,$ a jump parameter $\lambda$ depending on both the traffic and the quality
of the road $\lambda$ and a postjump transition $Q$ specifying functionality
of the network. We denote by $E$ the family of all possible functionality
variables (e.g. $\left\{  0,1\right\}  ^{N}$) and introduce, for all
$j=1,2,...,N$ a partition of $E=E_{j}^{active}\cup E_{j}^{inactive}.$

Given an initial couple describing the position and configuration $\left(
x,\gamma\right)  \in\overline{\mathcal{G}}\times E,$ we introduce the set of
feasible (network-constrained) controls for the deterministic framework by
setting%
\[
\mathcal{A}_{\gamma,x}:=\left\{  \alpha:%
\mathbb{R}
_{+}\longrightarrow A:\alpha\text{ is Borel measurable, }y_{\gamma}\left(
t;x,\alpha\right)  \in\overline{\mathcal{G}},\text{ for all }t\geq0,\text{
}\right\}  ,
\]
for all $\left(  x,\gamma\right)  \in\overline{\mathcal{G}}\times E.$ In
general, without further assumptions, these sets might be empty. We will
specify hereafter some controllability conditions that guarantee consistence
of these sets. We also introduce the set of constant, locally-admissible
controls for the deterministic problem by setting
\[
A_{\gamma,x}=\left\{  a\in A:y_{\gamma}\left(  t;x,a\right)  \in
\overline{\mathcal{G}},\text{ for some }\theta>0\text{ and all }t\in\left[
0,\theta\right]  \right\}  ,
\]
for all $\left(  x,\gamma\right)  \in\overline{\mathcal{G}}\times E.$

Unless stated otherwise, throughout the paper, we will use the following assumptions.

\textbf{(Aa)} There exist nonempty subsets $A^{\gamma,j}\subset A$ such that%
\begin{align*}
A_{\gamma,x}  &  =A^{\gamma,j},\text{ if }x\in J_{j},\\
A_{\gamma,O}  &  =\underset{j=1,2,...,N}{\cup}\left\{  a\in A^{\gamma
,j}:f\left(  O,a\right)  \in%
\mathbb{R}
_{+}e_{j}\right\}  ,\\
A_{\gamma,e_{j}}  &  =\left\{  a\in A^{\gamma,j}:\left\langle f_{\gamma
}\left(  e_{j},a\right)  ,e_{j}\right\rangle \leq0\right\}  \neq
\emptyset\text{,}%
\end{align*}
for all $\gamma\in E$ and all $j=1,2,...,N.$ Moreover, we assume that, for
every $\gamma\in E$ and every $j=1,2,...,N,$ either $A_{\gamma,e_{j}%
}=A^{\gamma,j}$ or, otherwise, there exists some $\beta>0$ and some
$a_{\gamma,j}\in A^{\gamma,j}$ satisfying%
\[
\left\langle f_{\gamma}\left(  e_{j},a_{\gamma,j}\right)  ,e_{j}\right\rangle
<-\beta.
\]

\textbf{(Ab)}

\textit{The active case :}

For all $\gamma\in E_{j}^{active},$ there exist some $\beta>0$ and some
$a_{\gamma,j}^{+},a_{\gamma,j}^{-}\in A^{\gamma,j}$ such that
\[
\left\langle f_{\gamma}\left(  O,a_{\gamma,j}^{+}\right)  ,e_{j}\right\rangle
>\beta\text{ and }\left\langle f_{\gamma}\left(  O,a_{\gamma,j}^{-}\right)
,e_{j}\right\rangle <-\beta.
\]

\textit{The inactive case :}

For $\gamma\in E_{j}^{inactive},$ there exist some $\beta>0,$ $1>\eta>0$,
$\kappa\in\left[  0,1\right)  $ and $a_{\gamma,j}^{-},a_{\gamma,j}^{0}\in
A^{\gamma,j}$ such that
\[
\left\langle f_{\gamma}\left(  x,a_{\gamma,j}^{-}\right)  ,e_{j}\right\rangle
\leq-\beta\left\langle x,e_{j}\right\rangle ^{\kappa},
\]
for all $x\in J_{j},$ $\left\vert x\right\vert \leq\eta$ and $f_{\gamma
}\left(  O,a_{\gamma,j}^{0}\right)  =0.$ Moreover,
\[
\left\langle f_{\gamma}\left(  x,a\right)  ,e_{j}\right\rangle \leq0,
\]
for all $a\in A^{\gamma,j}$ and all $x\in J_{j},$ $\left\vert x\right\vert
\leq\eta.$

\textbf{(Ac) }Whenever $\gamma\in E_{j}^{inactive},$ $l_{\gamma}\left(
O,a\right)  =l_{\gamma}\left(  O\right)  ,$ for all $a\in A^{\gamma,j}.$

\begin{remark}
\label{RemarkAss}(i) The condition (\textbf{Ab}) states that if the road is
functional (active), then one has a behavior similar to the one introduced in
\cite{AchdouCamilliCutri2013} (speeding up the traffic at the intersection in
both directions is possible).

If the road is inactive, then, again according to (\textbf{Ab}), for the cars
that have "just" entered the road, the only possibility is to move back into
the intersection (the road needs clearing up for repairing). A measure
($a_{\gamma,j}^{-}$) is possible to get them off this inactive road within a
controlled time and, eventually, they are allowed to stay in $O$ (due to the
control $a_{\gamma,j}^{0}$) until the road is repaired.

The condition (\textbf{Ac}) is intended for technical reasons. It can be
interpreted as : if the road is inactive, the presence of vehicles at the
entrance of the road prevents the authority to intervene and repair the road
and thus involves a certain cost. For vehicles that intend to get to $e_{j},$
there is a global "waiting" cost at junction. However, if $\left\{  a\in
A^{\gamma,j}:f\left(  O,a\right)  \in%
\mathbb{R}
_{+}e_{j}\right\}  =A^{\gamma,j}$, then (\textbf{Ac}) is no longer necessary.

(ii) Under the assumption (\textbf{Aa}), if $A_{\gamma,e_{j}}\neq A^{\gamma
,j},$ then there exists $\frac{1}{2}>\eta>0$ such that
\[
\left\langle f_{\gamma}\left(  x,a_{\gamma,j}\right)  ,e_{j}\right\rangle
<-\beta,
\]
whenever $\left\vert x-e_{j}\right\vert \leq\eta.$ Similarly, under the
assumption (\textbf{Ab}), for every $\gamma\in E_{j}^{active}$ and some
$\eta>0,$
\[
\left\langle f_{\gamma}\left(  x,a_{\gamma,j}^{-}\right)  ,e_{j}\right\rangle
<-\beta,\text{ }\left\langle f_{\gamma}\left(  x,a_{\gamma,j}^{+}\right)
,e_{j}\right\rangle >\beta,
\]
whenever $\left\vert x\right\vert \leq\eta.$
\end{remark}

Our assumptions guarantee the following.

\begin{proposition}
\label{Prop_nonempty}Under the assumptions (\textbf{Aa}) and (\textbf{Ab}),
the set $\mathcal{A}_{\gamma,x}$ is nonempty for all $\left(  x,\gamma\right)
\in\overline{\mathcal{G}}\times E.$
\end{proposition}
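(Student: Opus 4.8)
The plan is to build, for every $\left(x,\gamma\right)$, an explicit feasible control by a case analysis on the position of $x$ (interior of an edge, endpoint, or the intersection $O$) and on whether the edge carrying $x$ is active or inactive. The basic reduction I would use first is that, by \textbf{(Aa)}, for $x\in J_{j}$ one has $A_{\gamma,x}=A^{\gamma,j}$; since $\overline{\mathcal{G}}$ is locally the one-dimensional segment $\mathbb{R}e_{j}$ near such an $x$, any $a\in A^{\gamma,j}$ must keep the velocity tangent, i.e. $f_{\gamma}\left(x,a\right)\parallel e_{j}$. Parametrising the edge by $s=\left\langle x,e_{j}\right\rangle \in\left[0,1\right]$, a constant control $a\in A^{\gamma,j}$ thus produces the scalar autonomous equation $\dot{s}=\phi_{a}\left(s\right):=\left\langle f_{\gamma}\left(se_{j},a\right),e_{j}\right\rangle$, whose solutions are monotone while $s\in\left(0,1\right)$. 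This monotonicity is the structural fact that makes everything tractable.

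For the inactive case ($\gamma\in E_{j}^{inactive}$) the target is $O$: by \textbf{(Ab)}, $f_{\gamma}\left(O,a_{\gamma,j}^{0}\right)=0$, so once at $O$ the constant control $a_{\gamma,j}^{0}$ keeps the trajectory at $O$ forever. To reach $O$ I would first note that $\left\langle f_{\gamma}\left(x,a_{\gamma,j}^{-}\right),e_{j}\right\rangle \leq-\beta s^{\kappa}$ with $\kappa\in\left[0,1\right)$ on $\left\{s\leq\eta\right\}$; a comparison argument then shows $a_{\gamma,j}^{-}$ drives $s$ to $0$ in finite time (finiteness is exactly where $\kappa<1$ enters) while keeping $s$ nonincreasing, hence inside $\left[0,\eta\right]$. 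For an interior start with $s_{0}>\eta$ I run any $a\in A^{\gamma,j}$: by monotonicity $s$ either converges to an interior equilibrium in $\left(0,1\right)$---where the trajectory rests forever---or reaches the controlled zone $\left\{s\leq\eta\right\}$ (resp. the endpoint $e_{j}$) in finite time, after which I switch to $a_{\gamma,j}^{-}$ and then $a_{\gamma,j}^{0}$.

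For the active case ($\gamma\in E_{j}^{active}$) there need be no rest point, so I would instead construct a trajectory that oscillates near $O$. By Remark \ref{RemarkAss}(ii) there is $\eta\in\left(0,1\right)$ with $\left\langle f_{\gamma}\left(x,a_{\gamma,j}^{+}\right),e_{j}\right\rangle >\beta$ and $\left\langle f_{\gamma}\left(x,a_{\gamma,j}^{-}\right),e_{j}\right\rangle <-\beta$ for $\left|x\right|\leq\eta$. I first steer $s$ into $\left[\eta/4,3\eta/4\right]$ (using $a_{\gamma,j}^{+}$ from below or $a_{\gamma,j}^{-}$ from above, or, from an interior start $s_{0}>\eta$, any $a\in A^{\gamma,j}$, invoking monotonicity to reach either an interior equilibrium, the endpoint $e_{j}$---handled by the nonempty $A_{\gamma,e_{j}}$ of \textbf{(Aa)}---or the zone $s\leq\eta$). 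Then I alternate $a_{\gamma,j}^{+}$ (from $\eta/4$ up to $3\eta/4$) and $a_{\gamma,j}^{-}$ (from $3\eta/4$ down to $\eta/4$); since both speeds are bounded away from $0$ on $\left[0,\eta\right]$, each half-cycle takes a time bounded below, so the switching times diverge and the resulting piecewise-constant control is Borel and keeps the continuous trajectory inside $\left[\eta/4,3\eta/4\right]\subset\left(0,1\right)$ for all times. Finally, for $x=O$ I simply select any edge $j$ and apply the inactive rest (via $a_{\gamma,j}^{0}$) or the active oscillation according to the type of $j$.

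The step I expect to be the main obstacle is the interior of an active edge, where \textbf{(Ab)} gives no control on the sign of $\phi_{a}$ away from $O$ and $e_{j}$: there is in general neither a rest control nor a feedback realising a sliding mode (no convexity of $A$ is assumed). The resolution I rely on is exactly the monotonicity of the scalar autonomous flow under a fixed $a\in A^{\gamma,j}$, which forbids oscillation and forces the trajectory either to settle at an interior equilibrium (immediate feasibility) or to exit monotonically, in finite time, into one of the two zones where \textbf{(Ab)} and \textbf{(Aa)} do give explicit admissible controls. A secondary point needing care is that the active oscillation uses infinitely many switches; I must check these do not accumulate (they do not, since the speeds are bounded below on $\left[0,\eta\right]$), so that the control is genuinely defined on all of $\mathbb{R}_{+}$.
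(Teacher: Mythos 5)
Your proof is correct and rests on the same basic mechanism as the paper's: an explicit piecewise-constant control assembled from the distinguished controls $a_{\gamma,j}^{+},a_{\gamma,j}^{-},a_{\gamma,j}^{0}$ of (\textbf{Aa})--(\textbf{Ab}), together with a uniform lower bound on the time between switches so that the control is genuinely defined on all of $\mathbb{R}_{+}$. The constructions differ in the details. The paper shuttles the trajectory across the whole edge, from $O$ to $e_{j}$ under $a_{\gamma,j}^{+}$ and back under some $a\in A_{\gamma,e_{j}}$ (and parks at $O$ via $a_{\gamma,j}^{0}$ in the inactive case), the lower bound $0.5/\max\left(  \left\vert f\right\vert _{0},1\right)  $ on each leg coming from the length of the traversal; it leaves implicit what happens when a leg never terminates, namely that the constant control then stalls at an interior equilibrium and is admissible by itself. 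You instead oscillate inside the window $\left[  \eta/4,3\eta/4\right]  e_{j}$, where Remark \ref{RemarkAss}(ii) bounds the speeds away from zero in both directions, and you make the interior-equilibrium alternative explicit through the monotonicity of the scalar autonomous flow on each edge (which needs the forward uniqueness supplied by the one-sided Lipschitz bound in (\textbf{A1})). Your version is somewhat longer but more self-contained on exactly the point the paper glosses over, and it avoids having to discuss the behaviour of $a_{\gamma,j}^{+}$ far from the junction; both arguments are valid.
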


\begin{proof}
If $\gamma\in E_{j}^{active}$ and $x\in\left[  0,0.5\right)  e_{j},$ we
define
\[
t_{x,\gamma,e_{j}}^{+}:=\inf\left\{  t>0:y_{\gamma}\left(  t;x,a_{\gamma
,j}^{+}\right)  =e_{j}\right\}  ,
\]
If $x\in\left[  0.5,1\right]  e_{j},$ we let
\[
t_{x,\gamma,O}^{-}:=\inf\left\{  t>0:y_{\gamma}\left(  t;x,a\right)
=O\right\}  ,
\]
where $a$ is any point of $A_{\gamma,e_{j}}$. One notices that $t_{x,\gamma
,e_{j}}^{+}\geq\frac{0.5}{\max\left(  \left\vert f\right\vert _{0},1\right)
}$ and $t_{x,\gamma,O}^{-}\geq\frac{0.5}{\max\left(  \left\vert f\right\vert
_{0},1\right)  },$ where $\left\vert f\right\vert _{0}=\underset{\gamma\in
E,\text{ }x\in\overline{\mathcal{G}},\text{ }a\in A}{\max}\left\vert
f_{\gamma}\left(  x,a\right)  \right\vert $. For $x\in\left[  0,0.5\right)
e_{j},$ we set
\[
\alpha_{x,\gamma}^{0}\left(  t\right)  :=\left\{
\begin{array}
[c]{c}%
a_{\gamma,j}^{+},\text{ if }t\in\left[  0,t_{x,\gamma,e_{j}}^{+}\right)
\cup\left[  t_{x,\gamma,e_{j}}^{+}+t_{e_{j},\gamma,O}^{-},t_{x,\gamma,e_{j}%
}^{+}+t_{e_{j},\gamma,O}^{-}+t_{O,\gamma,e_{j}}^{+}\right)  \cup...,\\
a,\text{ otherwise.}%
\end{array}
\right.
\]
The estimates on $t^{+,-}$ imply that $\alpha_{x,\gamma}^{0}$ is defined on $%
\mathbb{R}
_{+}.$ Moreover, it is clear that $\alpha_{x,\gamma}^{0}\in A_{\gamma,x}.$
Similar construction holds true for $x\in\left[  0.5,1\right]  e_{j}.$ If
$\gamma\in E_{j}^{inactive},$ one gets similar results by replacing
$a_{\gamma,j}^{+}$ with $a_{\gamma,j}^{0}$. (In fact, in this case, if
$t_{x,\gamma,O}^{-}$ is finite, then the solution stays at $O$ after the time
$t_{x,\gamma,O}^{-}$). This concludes the proof of our assertion.
\end{proof}

We introduce the set $\mathcal{A}_{ad}$ given by%
\begin{equation}
\mathcal{A}_{ad}:=\left\{
\begin{array}
[c]{c}%
\alpha:%
\mathbb{R}
_{+}\times\overline{\mathcal{G}}\times E\longrightarrow A:\alpha\text{ is
Borel measurable,}\\
X_{t}^{x_{0},\gamma_{0},\alpha}\in\overline{\mathcal{G}},\text{ for all }%
t\geq0,\mathbb{P-}a.s.,\text{ for all }\left(  x_{0},\gamma_{0}\right)
\in\overline{\mathcal{G}}\times E\text{ }%
\end{array}
\right\}  . \label{Aad}%
\end{equation}
Here, $X_{t}^{x_{0},\gamma_{0},\alpha}$ is the continuous component of our
PDMP constructed as in Section \ref{constr+ass} by using $\alpha_{i}=\alpha,$
for all $i\geq1$.

\begin{remark}
\label{RemarkAad}(a) Under the assumptions \textbf{(Aa, Ab) }it is clear that
$\mathcal{A}_{ad}$ is nonempty. In fact, it suffices to note that all the
times $t^{+},t^{-}$ in the previous proposition are measurable functions of
$\left(  x,\gamma\right)  .$

(b) The set $\mathcal{A}_{\gamma,x}$ can be seen as a subset of $\mathcal{A}%
_{ad}$ by choosing some $\overline{\alpha}_{0}\in\mathcal{A}_{ad}$ and setting%
\[
\overline{\alpha}\left(  t,y,\eta\right)  :=\left\{
\begin{array}
[c]{c}%
\alpha\left(  t\right)  ,\text{ if }\left(  y,\eta\right)  =\left(
x,\gamma\right)  ,\\
\overline{\alpha}_{0}\left(  t;y,\eta\right)  ,\text{ otherwise,}%
\end{array}
\right.
\]
for all $\alpha\in\mathcal{A}_{\gamma,x}.$
\end{remark}

\begin{example}
Let us exhibit a simple example for which the previous assumptions
(particularly (A1), (Aa-Ab)) are satisfied. We consider $N=3$ and
$e_{1}:=\left(
\begin{array}
[c]{c}%
0\\
1
\end{array}
\right)  ,$ $e_{2}:=\left(
\begin{array}
[c]{c}%
1\\
0
\end{array}
\right)  =:-e_{3},$ $A:=\left[  -1,1\right]  e_{1}\cup\left[  -1,1\right]
e_{2},$ $E:=\left\{
\begin{array}
[c]{c}%
\left(  0,0,0\right)  ,\left(  0,1,1\right)  ,\\
\left(  1,0,0\right)  ,\left(  1,1,1\right)
\end{array}
\right\}  \subset\left\{  0,1\right\}  ^{3},$%
\[
f_{\gamma}\left(  x,a\right)  :=\gamma_{1}\left\langle a,e_{1}\right\rangle
e_{1}+\gamma_{2}\left\langle a,e_{2}\right\rangle e_{2}-\left\vert
a\right\vert \left[
\begin{array}
[c]{c}%
\left(  1-\gamma_{1}\right)  \left\langle x,e_{1}\right\rangle ^{\frac{1}{2}%
}e_{1}+\left(  1-\gamma_{2}\right)  \left(  \left\langle x,e_{2}\right\rangle
^{+}\right)  ^{\frac{1}{2}}e_{2}\\
+\left(  1-\gamma_{2}\right)  \left(  \left\langle x,e_{3}\right\rangle
^{+}\right)  ^{\frac{1}{2}}e_{3}%
\end{array}
\right]  ,\text{ }%
\]
for $x\in%
\mathbb{R}
\times%
\mathbb{R}
_{+},$ $\gamma=\left(  \gamma_{1},\gamma_{2},\gamma_{3}\right)  \in E.$ Here,
$z^{+}=\max\left(  z,0\right)  ,$ for $z\in%
\mathbb{R}
$. Then $E_{1}^{inactive}:=\left\{  \gamma\in E:\gamma_{1}=0\right\}  $ and
$E_{2}^{inactive}=E_{3}^{inactive}:=\left\{  \gamma\in E:\gamma_{2}=0\right\}
.$ The reader is invited to note that $f_{\gamma}$ is Lipschitz-continuous for
active configurations. Also, we wish to note that, for this particular case,
whenever $J_{2}$ is inactive (i.e. $\gamma\in E_{2}^{inactive}$)$,$
$f_{\gamma}\left(  re_{2},a\right)  =-f_{\gamma}\left(  -re_{2},a\right)  $,
for all $r\in%
\mathbb{R}
.$ The intersection acts as a mirror in the inactive case.

The cost $l$ can be chosen increasing with the speed, very high as one reaches
the intersection and null at the destination vertex. Moreover, it can be
chosen decreasing with respect to the number of available/ active roads

$l_{\gamma}\left(  \left(
\begin{array}
[c]{c}%
x_{1}\\
0
\end{array}
\right)  ,a\right)  :=l_{0}+\frac{1}{\gamma_{1}+\gamma_{2}+1}\left(
1-\left\vert x_{1}\right\vert \right)  ^{2}+\left\vert a\right\vert \left(
\left\vert x_{1}\right\vert -\left\vert x_{1}\right\vert ^{2}\right)  $ and
symmetrically for $\left(
\begin{array}
[c]{c}%
0\\
x_{2}%
\end{array}
\right)  $. Here, $l_{0}>0$ is some minimal cost.

The rate $\lambda$ can be chosen in a similar way as a propensity function :
we define $\widetilde{\lambda}_{\gamma}\left(  x,a\right)  :=\lambda
_{0}l_{\gamma}\left(  x,a\right)  $ for some $\lambda_{0}>0,$ then
$\lambda_{\gamma}\left(  x,a\right)  :=\underset{\gamma^{\prime}\in
E\smallsetminus\left\{  \gamma\right\}  }{\sum}\widetilde{\lambda}%
_{\gamma^{\prime}}\left(  x,a\right)  .$ The jump measure $Q$ can be chosen
proportional to the relative contribution to the propensity function
\[
Q\left(  x,\gamma,\gamma^{\prime},a\right)  :=\left\{
\begin{array}
[c]{c}%
\frac{\widetilde{\lambda}_{\gamma^{\prime}}\left(  x,a\right)  }%
{\lambda_{\gamma}\left(  x,a\right)  },\text{ if }\gamma^{\prime}\in
E\smallsetminus\left\{  \gamma\right\}  ,\\
0,\text{ if }\gamma^{\prime}=\gamma.
\end{array}
\right.
\]

\end{example}

\section{The dynamic programming principle and the regularity of the value
function(s)\label{Section_DPP_reg}}

The aim of the traffic regulator will be to minimize the expectation of the
(infinite horizon, discounted) operating cost $l$ satisfying (for the time
being and unless stated otherwise), the assumption \textbf{(A4)}%
\[
\inf_{\alpha}\mathbb{E}\left[  \int_{0}^{\infty}e^{-\delta t}l_{\Gamma
_{t}^{x,\gamma,\alpha}}\left(  X_{t}^{x,\gamma,\alpha},\alpha_{t}\right)
dt\right]  .
\]
The discount parameter $\delta>0$ will be fixed throughout the paper. The set
of control policies (keeping the vehicle on the network) as well as the
meaning of $\alpha_{t}$ will be given later on.

The program of this first part relies on the paper \cite{Soner86_2} : we study
the regularity properties in the deterministic setting via some projection
argument, then define some iterated value functions. Next, we prove the
uniform continuity of these iterates and the dynamic programming principles
(DPP). This leads to a regular limit function satisfying a DPP.

Throughout the paper, if $\phi$ is a bounded real-valued function on some set
$\mathcal{X\times}F,$ where $\mathcal{X\subset%
\mathbb{R}
}^{M}$ and $F$ is compact such that $\phi\left(  \cdot,\varsigma\right)  $ is
Lipschitz-continuous for all $\varsigma\in F,$ we set
\[
\left\vert \phi\right\vert _{0}:=\sup_{\left(  y,\varsigma\right)
\in\mathcal{X\times}F}\left\vert \phi\left(  y,\varsigma\right)  \right\vert
\text{ and }Lip\left(  \phi\right)  :=\sup_{\varsigma\in F}\sup
_{\substack{y,y^{\prime}\in\mathcal{X}\\y\neq y^{\prime}}}\frac{\left\vert
\phi\left(  y,\varsigma\right)  -\phi\left(  y^{\prime},\varsigma\right)
\right\vert }{\left\vert y-y^{\prime}\right\vert }.
\]
Whenever $f$ is not Lipschitz continuous (recall that (A1) is weaker than
Lipschitz-continuity), by abuse of notation, we let
\[
Lip\left(  f\right)  :=\sup_{\left(  \gamma,a\right)  \in E\times A}%
\sup_{\substack{y,y^{\prime}\in\mathcal{%
\mathbb{R}
}^{m}\\y\neq y^{\prime}}}\frac{\left\langle f_{\gamma}\left(  y,a\right)
-f_{\gamma}\left(  y^{\prime},a\right)  ,y-y^{\prime}\right\rangle
}{\left\vert y-y^{\prime}\right\vert ^{2}}.
\]
Of course, whenever the function $f$ \ is only defined and satisfies the
regularity assumptions on $\overline{\mathcal{G}}$, the supremum can be taken
over $j=1,2,...,N$ and $y,y^{\prime}$ which are colinear with $e_{j}$ and
$a\in A^{\gamma,j}$.

\subsection{A projection argument}

Whenever $\varepsilon>0$ is small enough, we let%
\[
t_{\varepsilon}:=-\frac{1}{\delta}\ln\left(  \frac{\varepsilon\delta
}{2\left\vert f\right\vert _{0}}\right)  ,\rho_{\varepsilon}:=\frac{\eta}%
{4}e^{-Lip\left(  f\right)  t_{\varepsilon}}.
\]

We will make extensive use of the following result.

\begin{lemma}
\label{Projection_Lemma}We assume (\textbf{Aa-Ac}) and (\textbf{A1-A4}) to
hold true.

(i) There exists some $C>0$ such that, for every $\varepsilon>0$, every
$\gamma\in E$, $x,y\in J_{1}\cup\left\{  O,e_{1}\right\}  $ satisfying
$\left\vert x-y\right\vert \leq\rho_{\varepsilon}^{\frac{2}{1-\kappa}}$ and
every $\alpha\in\mathcal{A}_{\gamma,x}$, there exists $\mathcal{P}%
_{x,y}\left(  \alpha\right)  \in\mathcal{A}_{\gamma,y}$ such that
\begin{equation}
\left\vert y_{\gamma}\left(  t;y,\mathcal{P}_{x,y}\left(  \alpha\right)
\right)  -y_{\gamma}\left(  t;x,\alpha\right)  \right\vert \leq C\left\vert
x-y\right\vert ^{\frac{1-\kappa}{2}}, \label{Projection_a}%
\end{equation}
and
\begin{equation}
\left\vert \int_{0}^{t}e^{-\delta s}l_{\gamma}\left(  y_{\gamma}\left(
s;y,\mathcal{P}_{x,y}\left(  \alpha\right)  \right)  ,\mathcal{P}_{x,y}\left(
\alpha\right)  \left(  s\right)  \right)  ds-\int_{0}^{t}e^{-\delta
s}l_{\gamma}\left(  y_{\gamma}\left(  s;x,\alpha\right)  ,\alpha\left(
s\right)  \right)  ds\right\vert \leq C\left\vert x-y\right\vert
^{\frac{1-\kappa}{2}}, \label{Projection_b}%
\end{equation}
for all $t\leq t_{\varepsilon}$.

(ii) Moreover, if $\alpha\in\mathcal{A}_{ad},$ then, for every $\varepsilon>0$
and every $\left(  \gamma,x\right)  \in E\times\left(  J_{1}\cup\left\{
O,e_{1}\right\}  \right)  ,$ there exists $\mathcal{P}_{\left(  x,\gamma
\right)  }\left(  \alpha\right)  \in\mathcal{A}_{ad}$ such that the previous
inequalities are satisfied with $\mathcal{P}_{\left(  x,\gamma\right)
}\left(  \alpha\right)  \left(  \cdot;y,\gamma\right)  $ replacing
$\mathcal{P}_{x,y}\left(  \alpha\right)  \left(  \cdot\right)  ,$ for all
$y\in J_{1}\cup\left\{  O,e_{1}\right\}  $ satisfying $\left\vert
x-y\right\vert \leq\rho_{\varepsilon}^{\frac{2}{1-\kappa}}.$
\end{lemma}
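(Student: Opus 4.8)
The plan is to reduce the problem to a scalar one on the edge $[0,1]e_{1}$ and to correct the naive ``copy the control'' strategy only in small time-windows near the two boundary points $O$ and $e_{1}$, where the state constraint $X\in\overline{\mathcal{G}}$ is active. Since an admissible control $\alpha\in\mathcal{A}_{\gamma,x}$ keeps the trajectory in $\overline{\mathcal{G}}$, while the vehicle travels in the interior $J_{1}$ the drift $f_{\gamma}(y_{\gamma}(t;x,\alpha),\alpha(t))$ must be colinear with $e_{1}$; writing $r(t):=\langle y_{\gamma}(t;x,\alpha),e_{1}\rangle$ the dynamics reduce to the scalar equation $\dot{r}=g_{\gamma}(r,a):=\langle f_{\gamma}(re_{1},a),e_{1}\rangle$ on $[0,1]$, with $r=0$ ($O$) and $r=1$ ($e_{1}$) as the constrained boundaries (the general index $j$ follows by relabeling, and a trajectory visiting $O$ is cut into successive sojourns in single edges, the passage through $O$ being handled by the controllability at the junction). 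First I would record the basic Gronwall estimate: by the one-sided Lipschitz bound in (A1), $\frac{d}{dt}\tfrac{1}{2}\lvert y_{\gamma}(t;y,\alpha)-y_{\gamma}(t;x,\alpha)\rvert^{2}\le Lip(f)\lvert y_{\gamma}(t;y,\alpha)-y_{\gamma}(t;x,\alpha)\rvert^{2}$, whence $\lvert y_{\gamma}(t;y,\alpha)-y_{\gamma}(t;x,\alpha)\rvert\le e^{Lip(f)t}\lvert x-y\rvert$ for $t\le t_{\varepsilon}$. Given the definition of $\rho_{\varepsilon}$ and the hypothesis $\lvert x-y\rvert\le\rho_{\varepsilon}^{2/(1-\kappa)}$, this already controls the difference, so the whole difficulty is to keep the perturbed trajectory inside $\overline{\mathcal{G}}$ without destroying the estimate.

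Next I would construct $\mathcal{P}_{x,y}(\alpha)$ by setting it equal to $\alpha$ on every sub-interval where the copied trajectory from $y$ stays in the bulk of the edge, and by splicing in the controllability controls near the boundaries. Near $e_{1}$, and near $O$ in the active case ($\gamma\in E_{1}^{active}$), the controllability is non-degenerate: by (Aa), (Ab) and Remark \ref{RemarkAss}(ii) there are controls $a_{\gamma,1}$, $a_{\gamma,1}^{+}$, $a_{\gamma,1}^{-}$ producing an inward drift of size $\ge\beta$ on a whole $\eta$-neighborhood, so whenever the copied trajectory would cross the $x$-trajectory at a boundary I insert the corresponding inward control; this correction acts on a time set of length $O(\lvert x-y\rvert)$ and the contraction furnished by the one-sided Lipschitz term keeps the two trajectories $O(\lvert x-y\rvert)$-close there. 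The nonemptiness and consistency of these splicings are exactly those of Proposition \ref{Prop_nonempty}.

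The main obstacle is the inactive case near $O$ ($\gamma\in E_{1}^{inactive}$), where by (Ab) every admissible drift satisfies $g_{\gamma}(r,a)\le0$ and the only outward push $a_{\gamma,1}^{-}$ is degenerate, $g_{\gamma}(r,a_{\gamma,1}^{-})\le-\beta r^{\kappa}$, while $a_{\gamma,1}^{0}$ freezes the state at $O$. Here I would use the change of variable $s=r^{1-\kappa}$, which turns $\dot{r}=-\beta r^{\kappa}$ into $\dot{s}=-(1-\kappa)\beta$, a constant-speed motion reaching $O$ in time proportional to $r^{1-\kappa}$; the H\"older bound $\lvert r_{x}^{1-\kappa}-r_{y}^{1-\kappa}\rvert\le\lvert x-y\rvert^{1-\kappa}$ then controls the time-mismatch between the two trajectories, and the option of waiting at $O$ with $a_{\gamma,1}^{0}$ (whose cost is control-independent by (Ac)) lets me re-synchronize them. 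This leaves an initial mismatch of order $\lvert x-y\rvert^{1-\kappa}$; amplifying it by the factor $e^{Lip(f)t_{\varepsilon}}=\tfrac{\eta}{4\rho_{\varepsilon}}$ and using the threshold hypothesis in the form $\lvert x-y\rvert^{(1-\kappa)/2}\le\rho_{\varepsilon}$ bounds the propagated error by $\tfrac{\eta}{4}\lvert x-y\rvert^{(1-\kappa)/2}$, which is precisely \eqref{Projection_a}; this is exactly the calibration for which $\rho_{\varepsilon}$ and the exponent $(1-\kappa)/2$ are designed. The cost estimate \eqref{Projection_b} then follows from \eqref{Projection_a} together with (A4): on the intervals where the controls agree the integrand differs by at most $Lip(l)\,C\lvert x-y\rvert^{(1-\kappa)/2}$, on the short correction intervals the difference is bounded using $\lvert l\rvert\le C$ and the smallness of their measure, and on the waiting phases at $O$ the values coincide by (Ac); summing against the weight $e^{-\delta s}$ gives the claim.

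Finally, for (ii) I would note that every object used in the construction---the hitting times of the levels near $O$ and $e_{1}$, the switching instants, and the choice among $a_{\gamma,1}$, $a_{\gamma,1}^{\pm}$, $a_{\gamma,1}^{0}$---is a Borel-measurable function of the pair of starting points, exactly as in Remark \ref{RemarkAad}(a). Carrying out the construction of (i) simultaneously for all admissible $y$ in the $\rho_{\varepsilon}^{2/(1-\kappa)}$-ball around $x$, and using a default control $\overline{\alpha}_{0}\in\mathcal{A}_{ad}$ elsewhere, produces a single Borel feedback $\mathcal{P}_{(x,\gamma)}(\alpha)$ keeping every trajectory in $\overline{\mathcal{G}}$; hence it lies in $\mathcal{A}_{ad}$ and satisfies the stated inequalities for all such $y$.
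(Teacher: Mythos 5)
Your proposal is correct and follows essentially the same route as the paper's proof: a case-by-case splicing construction near $O$ and $e_{1}$ using the controllability controls $a_{\gamma,1},a_{\gamma,1}^{\pm},a_{\gamma,1}^{0}$, the hitting-time bound $t\leq\frac{r^{1-\kappa}}{\left(  1-\kappa\right)  \beta}$ for the degenerate inward drift, Gronwall in the bulk calibrated against $\rho_{\varepsilon}$ and $t_{\varepsilon}$, assumption (\textbf{Ac}) for the cost during the waiting phases at $O$, and Borel measurability of the hitting times for part (ii). The only cosmetic difference is that after a boundary correction the paper has the follower retrace the leader's path with a pure time delay (so the residual gap is $\left\vert f\right\vert _{0}$ times the delay and no further corrections can be needed), whereas you propagate a residual spatial mismatch by Gronwall over $\left[  0,t_{\varepsilon}\right]  $; your calibration still closes, but the delay/re-synchronization device is what guarantees the corrections do not cascade.
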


\begin{remark}
\label{Uniform_remark_one} (i) A brief look at the proof shows that the
constant $C$ in the previous lemma only depends on $Lip\left(  l\right)
,\left\vert l\right\vert _{0},Lip\left(  f\right)  ,\left\vert f\right\vert
_{0}$ and $\beta$ but not of the actual coefficient $f$ nor of the actual cost
function $l.$

(ii) The assumption (\textbf{Ac}) is only needed if $\left\{  a\in
A^{\gamma,1}:f\left(  O,a\right)  \in%
\mathbb{R}
_{+}e_{1}\right\}  \neq A^{\gamma,1}.$ Otherwise, both the cases (b1) and the
analogous (c3.2) in the proof need not being treated as special cases.
\end{remark}

At this point, we introduce the value function for the deterministic case
($\lambda=0$, or, equivalently, the road functionality $\gamma$ is immutable)
by setting%
\[
v_{0}^{\delta}\left(  x,\gamma\right)  =\inf_{\alpha\in\mathcal{A}_{\gamma,x}%
}\int_{0}^{\infty}e^{-\delta t}l_{\gamma}\left(  y_{\gamma}\left(
t;x,\alpha\right)  ,\alpha\left(  t\right)  \right)  dt,
\]
for all $x\in\overline{\mathcal{G}}$ and all $\gamma\in E.$

As a consequence of our projection lemma, we get the following continuity
result :

\begin{theorem}
\label{Th_Cont_0}The deterministic value functions $v_{0}^{\delta}\left(
\cdot,\gamma\right)  $ are bounded and uniformly continuous on $\overline
{\mathcal{G}}.$
\end{theorem}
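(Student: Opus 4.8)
The plan is to deduce both boundedness and uniform continuity of $v_0^\delta(\cdot,\gamma)$ directly from the projection Lemma \ref{Projection_Lemma}, together with the bounds in (\textbf{A1}) and (\textbf{A4}). Boundedness is immediate: since $\left\vert l_\gamma\right\vert_0\leq C$ by (\textbf{A4}) and $\mathcal{A}_{\gamma,x}\neq\emptyset$ by Proposition \ref{Prop_nonempty}, every admissible control yields a cost bounded in absolute value by $\int_0^\infty e^{-\delta t}C\,dt=C/\delta$, so $\left\vert v_0^\delta(x,\gamma)\right\vert\leq C/\delta$ uniformly in $(x,\gamma)$. The substance is the modulus of continuity, and the key observation is that the definition of $t_\varepsilon$ makes the infinite-horizon tail negligible: since $\left\vert l_\gamma\right\vert_0\leq\left\vert l\right\vert_0$, one has $\int_{t_\varepsilon}^\infty e^{-\delta t}\left\vert l_\gamma\right\vert_0\,dt = \frac{\left\vert l\right\vert_0}{\delta}e^{-\delta t_\varepsilon}$, and by the choice $t_\varepsilon=-\frac1\delta\ln\!\big(\frac{\varepsilon\delta}{2\left\vert f\right\vert_0}\big)$ this tail is controlled by a quantity of order $\varepsilon$ (after absorbing the fixed constants $\left\vert l\right\vert_0,\left\vert f\right\vert_0$ into $C$). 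Thus it suffices to compare the costs only on the finite window $[0,t_\varepsilon]$, which is exactly where the projection estimate (\ref{Projection_b}) applies.

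The central argument is then a standard two-sided inequality. Fix $\gamma\in E$ and $x,y$ in $\overline{\mathcal{G}}$ with $\left\vert x-y\right\vert$ small; by symmetry of the roles of $x$ and $y$ it is enough to bound $v_0^\delta(y,\gamma)-v_0^\delta(x,\gamma)$. Given any $\alpha\in\mathcal{A}_{\gamma,x}$ that is near-optimal for $x$, I would apply the projection lemma to produce $\mathcal{P}_{x,y}(\alpha)\in\mathcal{A}_{\gamma,y}$, which is a genuine competitor in the infimum defining $v_0^\delta(y,\gamma)$. Splitting the cost at time $t_\varepsilon$, the tail on $[t_\varepsilon,\infty)$ for both controls is $O(\varepsilon)$ by the paragraph above, while the difference of the costs over $[0,t_\varepsilon]$ is bounded by $C\left\vert x-y\right\vert^{\frac{1-\kappa}{2}}$ via (\ref{Projection_b}). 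Taking the infimum over $\alpha$ yields
\[
v_0^\delta(y,\gamma)-v_0^\delta(x,\gamma)\leq C\left\vert x-y\right\vert^{\frac{1-\kappa}{2}}+C\varepsilon,
\]
valid whenever $\left\vert x-y\right\vert\leq\rho_\varepsilon^{\frac{2}{1-\kappa}}$. Interchanging $x$ and $y$ gives the reverse bound, and optimizing the free parameter $\varepsilon$ against $\left\vert x-y\right\vert$ (so that both the $\varepsilon$ term and the admissibility threshold $\rho_\varepsilon^{2/(1-\kappa)}$ are compatible with the given separation) produces a uniform modulus of continuity.

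Two points will require care. First, the projection lemma as stated is local: it concerns a single edge $J_1\cup\{O,e_1\}$ and requires $\left\vert x-y\right\vert\leq\rho_\varepsilon^{2/(1-\kappa)}$. To conclude uniform continuity on all of $\overline{\mathcal{G}}$, I must handle points lying on different edges and points near the intersection $O$; the natural route is to exploit the star-shaped structure, reducing any pair $x,y$ to comparisons along single edges (relabelling $J_1$) and, when $x,y$ sit on distinct edges, routing the comparison through $O$. Second, the anticipated main obstacle is the bookkeeping of how $\varepsilon$, $\rho_\varepsilon$, and $t_\varepsilon$ interlock: because $\rho_\varepsilon$ decays exponentially in $t_\varepsilon$ (hence polynomially/logarithmically in $\varepsilon$), the admissibility constraint $\left\vert x-y\right\vert\leq\rho_\varepsilon^{2/(1-\kappa)}$ forces a nontrivial relation between the achievable modulus and $\varepsilon$. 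Extracting a clean (though possibly non-H\"older) uniform modulus $\omega(\left\vert x-y\right\vert)\to0$ from this interlocking, rather than a naive power of $\left\vert x-y\right\vert$, is the delicate quantitative step; the uniformity in $(x,\gamma)$ is then guaranteed by Remark \ref{Uniform_remark_one}(i), since the constant $C$ depends only on $Lip(l),\left\vert l\right\vert_0,Lip(f),\left\vert f\right\vert_0,\beta$ and not on the base point.
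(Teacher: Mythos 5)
Your proposal is correct and follows essentially the same route as the paper: near-optimal control for $x$, the projection Lemma \ref{Projection_Lemma} to transplant it to $y$, the tail bound $\frac{\left\vert l\right\vert_{0}}{\delta}e^{-\delta t_{\varepsilon}}=O(\varepsilon)$ from the choice of $t_{\varepsilon}$, and symmetry in $x,y$. The only simplification you missed is that the paper avoids your "delicate quantitative step" entirely: it proves pointwise continuity (treating $x=O$ separately by taking limits along each edge) and then invokes compactness of $\overline{\mathcal{G}}$ to upgrade to uniform continuity, so no explicit optimization of $\varepsilon$ against $\left\vert x-y\right\vert$ is needed.
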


\begin{proof}
Since the domain $\overline{\mathcal{G}}$ is compact, it suffices to prove
that $v^{\delta}\left(  \cdot,\gamma\right)  $ is continuous. Let us fix
$x\in\overline{\mathcal{G}}\setminus\left\{  O\right\}  $ and consider
$\varepsilon>0.$ Without loss of generality, we assume that $x\in J_{1}%
\cup\left\{  e_{1}\right\}  .$ Then, there exists some $\alpha\in
\mathcal{A}_{\gamma,x}$ such that
\[
v_{0}^{\delta}\left(  x,\gamma\right)  +\varepsilon\geq\int_{0}%
^{t_{\varepsilon}}e^{-\delta t}l_{\gamma}\left(  y_{\gamma}\left(
t;x,\alpha\right)  ,\alpha\left(  t\right)  \right)  dt-\frac{1}{\delta
}e^{-\delta t_{\varepsilon}}\left\vert l\right\vert _{0}.
\]
Hence, for every $y\in J_{1}\cup\left\{  e_{1},O\right\}  $ such that
$\left\vert x-y\right\vert \leq\rho_{\varepsilon}^{\frac{2}{1-\kappa}},$ using
the previous lemma, there exists $\mathcal{P}_{x,y}\left(  \alpha\right)
\in\mathcal{A}_{\gamma,y}$ such that
\begin{align*}
v_{0}^{\delta}\left(  x,\gamma\right)  +\varepsilon &  \geq\int_{0}%
^{t_{\varepsilon}}e^{-\delta t}l_{\gamma}\left(  y\left(  t;x,\mathcal{P}%
_{x,y}\left(  \alpha\right)  \right)  ,\mathcal{P}_{x,y}\left(  \alpha\right)
\left(  t\right)  \right)  dt-C\rho_{\varepsilon}-\frac{1}{\delta}e^{-\delta
t_{\varepsilon}}\left\vert l\right\vert _{0}\\
&  \geq\int_{0}^{\infty}e^{-\delta t}l_{\gamma}\left(  y\left(
t;x,\mathcal{P}_{x,y}\left(  \alpha\right)  \right)  ,\mathcal{P}_{x,y}\left(
\alpha\right)  \left(  t\right)  \right)  dt-C\rho_{\varepsilon}-\frac
{2}{\delta}e^{-\delta t_{\varepsilon}}\left\vert l\right\vert _{0}\\
&  \geq v_{0}^{\delta}\left(  y,\gamma\right)  -C\rho_{\varepsilon}%
-\frac{\left\vert l\right\vert _{0}}{\left\vert f\right\vert _{0}}\varepsilon.
\end{align*}
The continuity property follows by recalling that $\varepsilon>0$ is arbitrary
and $\underset{\varepsilon\rightarrow0}{\lim}\rho_{\varepsilon}=0.$ In the
case when $x=O,$ the same arguments yield
\[
\lim_{\substack{y\rightarrow O\\y\in J_{j}}}v_{0}^{\delta}\left(
y,\gamma\right)  =v_{0}^{\delta}\left(  O,\gamma\right)  ,
\]
for every $j=$ $1,2,...,N.$ The proof of our theorem is now complete.
\end{proof}

\begin{remark}
\label{Uniform_remark_two} The reader is invited to note that the continuity
modulus of $v_{0}^{\delta}$ depends only on $Lip\left(  l\right)  ,$
$\left\vert l\right\vert _{0},$ $Lip\left(  f\right)  ,$ $\left\vert
f\right\vert _{0}$ and $\beta$ but not of the actual coefficient $f$ nor of
the actual cost function $l.$
\end{remark}

\subsection{Iterated value function}

Following the ideas of \cite{Soner86_2}, we introduce the iterated value
functions $v_{m}^{\delta}$ defined by%
\[
v_{m}^{\delta}\left(  x,\gamma\right)  :=\inf_{\alpha\in\mathcal{A}_{ad}}%
J_{m}\left(  x,\gamma,\alpha\right)  ,
\]
where
\[
J_{m}\left(  x,\gamma,\alpha\right)  :=\mathbb{E}\left[  \int_{0}^{\tau_{1}%
}e^{-\delta t}l_{\gamma}\left(  X_{t}^{x,\gamma,\alpha},\alpha\left(
t;x,\gamma\right)  \right)  dt+e^{-\delta\tau_{1}}v_{m-1}^{\delta}\left(
Y_{1},\Upsilon_{1}\right)  \right]  .
\]
We recall that $\left(  Y_{1},\Upsilon_{1}\right)  $ are the post-jump
locations at the first jump time $\tau_{1}$ depending on $x,\gamma,\alpha,$
(cf. Section \ref{constr+ass}). Hence, we have $\left(  Y_{1},\Upsilon
_{1}\right)  =\left(  X_{\tau_{1}}^{x,\gamma,\alpha},\Gamma_{\tau_{1}%
}^{x,\gamma,\alpha}\right)  $ and $\tau_{1}=\tau_{1}^{x,\gamma,\alpha}.$ The
process is constructed as in section \ref{constr+ass} using $\alpha_{i}%
=\alpha\in\mathcal{A}_{ad},$ for all $i\geq1.$ The reader is invited to note
that a simple recurrence argument yields
\begin{equation}
\left\vert v_{m}^{\delta}\left(  x,\gamma\right)  \right\vert \leq
\frac{\left\vert l\right\vert _{0}}{\delta},\text{ for all }\left(
x,\gamma\right)  \in\overline{\mathcal{G}}\times E.
\end{equation}

Throughout the section, unless stated otherwise, we assume (\textbf{Aa-Ac})
and (\textbf{A1-A4}) to hold true. In order to simplify our presentation, we
assume that $\lambda$ and $Q$ are independent of the control parameter $a.$
The general case follows from similar arguments as those of Lemma
\ref{Projection_Lemma} (the estimates on $l$) if one assumes

\textbf{(Ac') }Whenever $\gamma\in E_{j}^{inactive},$ $Q\left(  O,\gamma
,\gamma^{\prime},a\right)  =Q\left(  O,\gamma,\gamma^{\prime}\right)  $ and
$\lambda\left(  O,\gamma,a\right)  =\lambda\left(  O,\gamma\right)  .$

Again, \textbf{(Ac') }is only needed for those $j$ such that $\gamma\in
E_{j}^{inactive}$ and $\left\{  a\in A^{\gamma,j}:f\left(  O,a\right)  \in%
\mathbb{R}
_{+}e_{j}\right\}  \neq A^{\gamma,j}.$

The same arguments as those employed in \cite[Lemma 3.1]{Soner86_2} yield

\begin{lemma}
\label{LemmaDPPItterate}Let us assume that $v_{m-1}^{\delta}\left(
\cdot,\gamma\right)  $ is continuous on $\overline{\mathcal{G}}$. Then, for
every $T>0,$ one has%
\[
v_{m}^{\delta}\left(  x,\gamma\right)  =\inf_{\alpha\in\mathcal{A}_{ad}%
}\mathbb{E}\left[
\begin{array}
[c]{c}%
\int_{0}^{\tau_{1}\wedge T}e^{-\delta t}l_{\gamma}\left(  y_{\gamma}\left(
t;x,\alpha\right)  ,\alpha\left(  t;x,\gamma\right)  \right)  dt\\
+e^{-\delta\tau_{1}}v_{m-1}^{\delta}\left(  Y_{1},\Upsilon_{1}\right)
\mathbf{1}_{\tau_{1}\leq T}+e^{-\delta T}v_{m}^{\delta}\left(  y_{\gamma
}\left(  T;x,\alpha\right)  ,\gamma\right)  \mathbf{1}_{\tau_{1}>T}%
\end{array}
\right]  ,
\]
for all $\left(  x,\gamma\right)  \in\overline{\mathcal{G}}\times E.$
\end{lemma}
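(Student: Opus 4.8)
The plan is to prove the identity by the two standard inequalities, the whole point being that \emph{before the first jump the continuous component is purely deterministic}. Write
\[
\Phi(x,\gamma,\alpha):=\mathbb{E}\Big[\int_{0}^{\tau_{1}\wedge T}e^{-\delta t}l_{\gamma}\big(y_{\gamma}(t;x,\alpha),\alpha(t;x,\gamma)\big)\,dt+e^{-\delta\tau_{1}}v_{m-1}^{\delta}(Y_{1},\Upsilon_{1})\mathbf{1}_{\tau_{1}\le T}+e^{-\delta T}v_{m}^{\delta}\big(y_{\gamma}(T;x,\alpha),\gamma\big)\mathbf{1}_{\tau_{1}>T}\Big],
\]
so that the claim is $v_{m}^{\delta}(x,\gamma)=\inf_{\alpha\in\mathcal{A}_{ad}}\Phi(x,\gamma,\alpha)$. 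Fix $\alpha\in\mathcal{A}_{ad}$ and set $z:=y_{\gamma}(T;x,\alpha)$. Since $\Gamma$ does not change and $X$ follows the deterministic flow up to $\tau_1$, on the event $\{\tau_{1}>T\}$ the state at time $T$ is the \emph{single} deterministic point $(z,\gamma)$; this is exactly what lets one avoid any measurable-selection step over a continuum of post-$T$ starting positions.

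First I would record the restart (flow) decomposition of $J_m$. Using the explicit construction of Section \ref{constr+ass}, the survival function of $\tau_{1}$ is $\exp\!\big(-\int_{0}^{t}\lambda(y_{\gamma}(s;x,\alpha),\gamma)\,ds\big)$, and the ODE semigroup identity $y_{\gamma}(T+s;x,\alpha)=y_{\gamma}(s;z,\alpha^{(T)})$ holds, where $\alpha^{(T)}(s;y,\eta):=\alpha(T+s;y,\eta)$. Conditioning on the history $\mathcal{F}_{T}$ generated by the process up to $T$ and splitting $\int_{0}^{\tau_{1}}=\int_{0}^{\tau_{1}\wedge T}+\mathbf{1}_{\tau_{1}>T}\int_{T}^{\tau_{1}}$, the memoryless form of the jump law gives
\[
J_{m}(x,\gamma,\alpha)=\mathbb{E}\Big[\int_{0}^{\tau_{1}\wedge T}e^{-\delta t}l_{\gamma}\big(y_{\gamma}(t;x,\alpha),\alpha(t;x,\gamma)\big)\,dt+e^{-\delta\tau_{1}}v_{m-1}^{\delta}(Y_{1},\Upsilon_{1})\mathbf{1}_{\tau_{1}\le T}+\mathbf{1}_{\tau_{1}>T}\,e^{-\delta T}J_{m}\big(z,\gamma,\alpha^{(T)}\big)\Big].
\]
Here the continuity of $v_{m-1}^{\delta}$ guarantees that $v_{m-1}^{\delta}(Y_{1},\Upsilon_{1})$ is a bounded measurable random variable, so all integrals are well defined.

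For $v_{m}^{\delta}\ge\inf_{\alpha}\Phi$, I would note $\alpha^{(T)}\in\mathcal{A}_{ad}$ (the shifted feedback keeps the trajectory issued from $z$ inside $\overline{\mathcal{G}}$) and insert $J_{m}(z,\gamma,\alpha^{(T)})\ge v_{m}^{\delta}(z,\gamma)$ into the decomposition, which yields $J_{m}(x,\gamma,\alpha)\ge\Phi(x,\gamma,\alpha)\ge\inf_{\alpha}\Phi$; taking the infimum over $\alpha$ gives the inequality. For the reverse inequality, fix $\varepsilon>0$ and choose $\tilde\alpha^{0}\in\mathcal{A}_{ad}$ with $J_{m}(z,\gamma,\tilde\alpha^{0})\le v_{m}^{\delta}(z,\gamma)+\varepsilon$, then concatenate by setting $\tilde\alpha=\alpha$ on $[0,T)$ and $\tilde\alpha=\tilde\alpha^{0}(\cdot-T;\cdot,\cdot)$ on $[T,\infty)$, reglued into $\mathcal{A}_{ad}$ as in Remark \ref{RemarkAad}(b). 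Since $\tilde\alpha$ agrees with $\alpha$ before $T$, the laws of $\tau_{1}\wedge T$, of $(Y_{1},\Upsilon_{1})\mathbf{1}_{\tau_{1}\le T}$ and of $\{\tau_{1}>T\}$ are unchanged, while on $\{\tau_{1}>T\}$ the continuation is a fresh PDMP from $(z,\gamma)$ driven by $\tilde\alpha^{0}$; the same decomposition then gives $J_{m}(x,\gamma,\tilde\alpha)\le\Phi(x,\gamma,\alpha)+\varepsilon\,e^{-\delta T}\mathbb{P}(\tau_{1}>T)\le\Phi(x,\gamma,\alpha)+\varepsilon$. Hence $v_{m}^{\delta}(x,\gamma)\le\Phi(x,\gamma,\alpha)+\varepsilon$, and letting $\varepsilon\downarrow0$ and taking the infimum closes the proof.

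The hard part will be the clean justification of the restart identity: one must verify, from the explicit construction, that conditionally on $\{\tau_{1}>T\}$ and $\mathcal{F}_{T}$ the post-$T$ evolution is \emph{exactly} a PDMP launched from the deterministic point $(z,\gamma)$ with the shifted control, and that the concatenated feedback $\tilde\alpha$ genuinely belongs to $\mathcal{A}_{ad}$ (measurability together with the state constraint for every initial datum). These are the points where the assumptions (\textbf{Aa--Ac}) and the continuity of $v_{m-1}^{\delta}$ do the real work; the remainder is the bookkeeping of the two inequalities, exactly as in \cite[Lemma 3.1]{Soner86_2}.
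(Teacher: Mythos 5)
Your proof is correct and follows essentially the same route as the paper, which omits the argument and invokes the two-inequality restart proof of Lemma 3.1 in \cite{Soner86_2}: the deterministic location $z=y_{\gamma}(T;x,\alpha)$ on $\{\tau_{1}>T\}$ removes any measurable-selection difficulty, and the concatenation via Remark \ref{RemarkAad}(b) handles admissibility. The only point to phrase more carefully is that the naively shifted control $\alpha^{(T)}(s;y,\eta)=\alpha(T+s;y,\eta)$ need not belong to $\mathcal{A}_{ad}$ for arbitrary initial data (the tail of $\alpha(\cdot;y,\eta)$ was designed for the trajectory that is at $y_{\gamma}(T;y,\alpha)$ at time $T$, not at $y$); one should instead set $\alpha^{(T)}(s;z,\gamma):=\alpha(T+s;x,\gamma)$ and patch with a fixed admissible control elsewhere, exactly as you already do for the concatenated control --- and since only the value at $(z,\gamma)$ enters $J_{m}(z,\gamma,\cdot)$, this is harmless.
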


The proof is identical (no changes needed) to the one in \cite[Lemma
3.1]{Soner86_2} and will be omitted from our (already long enough) presentation.

\begin{theorem}
The functions $v_{m}^{\delta}\left(  \cdot,\gamma\right)  $ are uniformly
continuous on $\overline{\mathcal{G}},$ for all $m\geq0$ and uniformly with
respect to $\gamma\in E.$
\end{theorem}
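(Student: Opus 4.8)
We want uniform continuity of $v_m^\delta(\cdot,\gamma)$ on $\overline{\mathcal G}$, uniformly in $m\ge 0$ and in $\gamma\in E$. Let me think about how to do this by induction on $m$, using the projection Lemma \ref{Projection_Lemma} as the main tool, exactly as the deterministic Theorem \ref{Th_Cont_0} does.

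Let me set up the structure. The base case $m=0$ is Theorem \ref{Th_Cont_0}, which gives uniform continuity of $v_0^\delta(\cdot,\gamma)$ with a modulus depending only on $Lip(l),|l|_0,Lip(f),|f|_0,\beta$ (by Remark \ref{Uniform_remark_two}) — crucially NOT on $m$. For the inductive step, assume $v_{m-1}^\delta(\cdot,\gamma)$ is uniformly continuous with modulus $\omega_{m-1}$, independent of $\gamma$. I want to control $|v_m^\delta(x,\gamma) - v_m^\delta(y,\gamma)|$ for $x,y$ close.

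The engine is the projection map $\mathcal P_{(x,\gamma)}(\alpha)\in\mathcal A_{ad}$ from part (ii) of Lemma \ref{Projection_Lemma}. Given a near-optimal $\alpha\in\mathcal A_{ad}$ for $v_m^\delta(x,\gamma)$, I build $\mathcal P_{(x,\gamma)}(\alpha)$ and compare $J_m(x,\gamma,\alpha)$ with $J_m(y,\gamma,\mathcal P_{(x,\gamma)}(\alpha))$. Three terms must be controlled up to time $t_\varepsilon$ and then the tail past $t_\varepsilon$ is absorbed by the discount (as in the proof of Theorem \ref{Th_Cont_0}): the running cost integral up to $\tau_1\wedge t_\varepsilon$, the discounted post-jump value $e^{-\delta\tau_1}v_{m-1}^\delta(Y_1,\Upsilon_1)$, and the density of the jump time $\tau_1$ which differs between the two trajectories through $\lambda$.

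Here is the plan. For the running-cost term, inequality (\ref{Projection_b}) of Lemma \ref{Projection_Lemma} directly gives a bound $C|x-y|^{\frac{1-\kappa}{2}}$ on $[0,t_\varepsilon]$. For the post-jump term I use (\ref{Projection_a}) to bound the trajectory gap $|Y_1^y - Y_1^x|\le C|x-y|^{\frac{1-\kappa}{2}}$, then feed this through the inductive modulus $\omega_{m-1}$ so that $|v_{m-1}^\delta(Y_1^y,\cdot)-v_{m-1}^\delta(Y_1^x,\cdot)|\le\omega_{m-1}(C|x-y|^{\frac{1-\kappa}{2}})$; I also need the jump \emph{law} (the exponential density built from $\lambda$ and the post-jump kernel $Q$) for the two trajectories to be close, which follows from (A2)–(A3) (Lipschitz continuity of $\lambda$ and $Q$) together with the trajectory estimate (\ref{Projection_a}). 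The cleanest bookkeeping is to work with the DPP form of Lemma \ref{LemmaDPPItterate} at horizon $T=t_\varepsilon$, so that the "$\tau_1>T$" term carries $v_m^\delta$ itself but multiplied by $e^{-\delta t_\varepsilon}$, which is of order $\varepsilon$ by the definition of $t_\varepsilon$; this is what lets the recursion close without the modulus blowing up with $m$.

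The point requiring care — and the main obstacle — is to verify that the resulting modulus is \emph{uniform in $m$}. Writing $\omega_m$ for the modulus of $v_m^\delta$, the comparison above yields a bound of the shape
\[
\omega_m(r)\;\le\; C\,r^{\frac{1-\kappa}{2}} \;+\; \varepsilon(r)\cdot\frac{2|l|_0}{\delta} \;+\; \sup_{s\le C r^{\frac{1-\kappa}{2}}}\omega_{m-1}(s),
\]
where $\varepsilon(r)\to 0$ as $r\to 0$ through the relation $|x-y|\le\rho_\varepsilon^{\frac{2}{1-\kappa}}$. A naive reading adds a fixed error at each level and would let the modulus degrade with $m$; the resolution is that the contribution carrying $v_{m-1}^\delta$ (resp. $v_m^\delta$) enters with a factor strictly smaller than $1$ because it is discounted by $e^{-\delta\tau_1}$ on $\{\tau_1\le t_\varepsilon\}$ (resp. by $e^{-\delta t_\varepsilon}$ on $\{\tau_1>t_\varepsilon\}$), and under (A2) the probability $\mathbb P(\tau_1\le t_\varepsilon)$ is bounded away from $1$ uniformly. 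Thus the recursion for $\omega_m$ is a contraction up to a small additive term, giving a geometric series that sums to a finite, $m$-independent modulus $\omega_\infty$. I would make this rigorous by fixing $\varepsilon$ as a function of $r=|x-y|$ (exactly as in Theorem \ref{Th_Cont_0}), extracting the contraction factor $q=\sup_\gamma\mathbb E[e^{-\delta\tau_1}]<1$ from (A2), and then proving by induction that $\omega_m(r)\le \frac{1}{1-q}\big(C r^{\frac{1-\kappa}{2}}+\varepsilon(r)\tfrac{2|l|_0}{\delta}\big)$ for all $m$. The $O$-case is handled separately by taking one-sided limits along each edge $J_j$, as in Theorem \ref{Th_Cont_0}. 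This establishes uniform continuity of $v_m^\delta(\cdot,\gamma)$, uniformly in $m$ and $\gamma$.
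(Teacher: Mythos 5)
Your proposal follows the paper's proof essentially step for step: induction on $m$ with Theorem \ref{Th_Cont_0} as the base case, the dynamic programming principle of Lemma \ref{LemmaDPPItterate} at horizon $T=t_{\varepsilon}$, the projection $\mathcal{P}_{\left(x,\gamma\right)}\left(\alpha\right)$ of Lemma \ref{Projection_Lemma} (ii) applied to a near-optimal control, the estimates (\ref{Projection_a})--(\ref{Projection_b}) combined with (A2)--(A3) to compare the jump intensity and the post-jump kernel along the two trajectories, and a functional inequality for the moduli in which the $v_{m}^{\delta}$-term is damped by $e^{-\delta t_{\varepsilon}}<1$. All the key ingredients are correctly identified.

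The one step that fails as written is the closing of the recursion. The claimed $m$-independent bound $\omega_{m}\left(r\right)\leq\frac{1}{1-q}\left(Cr^{\frac{1-\kappa}{2}}+\varepsilon\left(r\right)\frac{2\left\vert l\right\vert_{0}}{\delta}\right)$ does not propagate through the inequality $\omega_{m}\left(r\right)\leq\varepsilon+Cr^{\frac{1-\kappa}{2}}+\omega_{m-1}\left(Cr^{\frac{1-\kappa}{2}}\right)+e^{-\delta t_{\varepsilon}}\omega_{m}\left(Cr^{\frac{1-\kappa}{2}}\right)$: the moduli on the right are evaluated at $Cr^{\frac{1-\kappa}{2}}$, which for $C>1$ and small $r$ is \emph{larger} than $r$, so substituting the Ansatz produces the power $r^{\left(\frac{1-\kappa}{2}\right)^{2}}$ and the same closed form cannot reappear on the left. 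This is precisely why the paper does not attempt an explicit H\"{o}lder-type modulus: it iterates the inequality along the sequence $r_{n}=C^{-\frac{2}{1+\kappa}\left[\left(\frac{1-\kappa}{2}\right)^{-n}-1\right]}$, chosen so that $Cr_{n}^{\frac{1-\kappa}{2}}=r_{n-1}$, truncates after $n$ steps using $\omega_{m}\leq2\left\vert l\right\vert_{0}/\delta$, and lets $n\rightarrow\infty$; there the factor $e^{-\delta t_{\varepsilon}}<1$ kills the $\omega_{m}$-chain, while the accumulated $\omega_{m-1}$-terms (which enter with coefficient $1$, not $q<1$) vanish only because of the induction hypothesis $\omega_{m-1}\left(0+\right)=0$. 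Your contraction idea can still be salvaged to give equicontinuity in $m$ by running the paper's iteration on $\sup_{m}\omega_{m}$ rather than on your closed form, but the explicit bound you propose is false as stated. A minor additional point: $\mathbb{P}\left(\tau_{1}\leq t_{\varepsilon}\right)$ is \emph{not} bounded away from $1$ uniformly in $\varepsilon$ (since $t_{\varepsilon}\rightarrow\infty$); the quantity that is uniformly below $1$ is $\mathbb{E}\left[e^{-\delta\left(\tau_{1}\wedge t_{\varepsilon}\right)}\right]\leq\frac{\left\vert\lambda\right\vert_{0}}{\delta+\left\vert\lambda\right\vert_{0}}+e^{-\left(\delta+\left\vert\lambda\right\vert_{0}\right)t_{\varepsilon}}$.
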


\begin{proof}
We prove our theorem by recurrence over $m.$ For $m=0,$ we invoke theorem
\ref{Th_Cont_0}. Let us assume that $v_{m-1}^{\delta}\left(  \cdot
,\gamma^{\prime}\right)  $ is continuous for all $\gamma^{\prime}\in E$. We
let $\omega_{m-1}$ be the continuity modulus
\[
\omega_{m-1}\left(  r\right)  :=\sup\left\{  \left\vert v_{m-1}^{\delta
}\left(  x,\gamma^{\prime}\right)  -v_{m-1}^{\delta}\left(  y,\gamma^{\prime
}\right)  \right\vert :\left\vert x-y\right\vert \leq r,\text{ }\gamma
^{\prime}\in E\right\}  .
\]
We also introduce
\[
\omega_{m}\left(  \gamma,r\right)  :=\sup\left\{  \left\vert v_{m}^{\delta
}\left(  x,\gamma\right)  -v_{m}^{\delta}\left(  y,\gamma\right)  \right\vert
:\left\vert x-y\right\vert \leq r\right\}  ,
\]
for all $r>0.$ Obviously, $\omega_{m}\left(  r\right)  =\underset{\gamma\in
E}{\sup}$ $\omega_{m}\left(  \gamma,r\right)  $. It is straightforward that
$\omega_{m}\left(  r\right)  \leq2\frac{\left\vert l\right\vert _{0}}{\delta}%
$. Let us fix, for the time being, $\left(  \gamma,x,y\right)  \in
E\times\overline{\mathcal{G}}^{2}$, $\varepsilon>0$ and assume that
$\left\vert x-y\right\vert \leq\rho_{\varepsilon}^{\frac{2}{1-\kappa}}$. Then,
due to the previous lemma, there exists some admissible control process
$\alpha\in\mathcal{A}_{ad}$ such that
\[
v_{m}^{\delta}\left(  x,\gamma\right)  \geq-\varepsilon+\mathbb{E}\left[
\begin{array}
[c]{c}%
\int_{0}^{\tau_{1}\wedge t_{\varepsilon}}e^{-\delta t}l_{\gamma}\left(
y_{\gamma}\left(  t;x,\alpha\right)  ,\alpha\left(  t;x,\gamma\right)
\right)  dt\\
+e^{-\delta\tau_{1}}v_{m-1}^{\delta}\left(  Y_{1},\Upsilon_{1}\right)
\mathbf{1}_{\tau_{1}\leq t_{\varepsilon}}+e^{-\delta t_{\varepsilon}}%
v_{m}^{\delta}\left(  y_{\gamma}\left(  t_{\varepsilon};x,\alpha\right)
,\gamma\right)  \mathbf{1}_{\tau_{1}>t_{\varepsilon}}.
\end{array}
\right]
\]
We denote by $\widetilde{\alpha}$ the admissible control process
$\mathcal{P}_{\left(  x,\gamma\right)  }\left(  \alpha\right)  \in
\mathcal{A}_{ad}$ given by the assertion (ii) in Lemma \ref{Projection_Lemma}.
Moreover, we let $\widetilde{\tau}_{1}$ be the first jump time starting from
$\left(  y,\gamma\right)  $ and using the control $\widetilde{\alpha}.$ We
introduce the following notations :
\begin{align*}
y(t)  &  :=y_{\gamma}\left(  t;x,\alpha\right)  ,\text{ }\alpha(t):=\alpha
\left(  t;x,\gamma\right)  ,\text{ }\lambda\left(  t\right)  :=\lambda\left(
y\left(  t\right)  ,\gamma\right)  ,\text{ }\Lambda\left(  t\right)
=\exp\left(  -\int_{0}^{t}\lambda\left(  s\right)  ds\right)  ,\\
\widetilde{y}(t)  &  :=y_{\gamma}\left(  t;y,\widetilde{\alpha}\right)
,\text{ }\widetilde{\alpha}(t):=\widetilde{\alpha}\left(  t;y,\gamma\right)
,\text{ }\widetilde{\lambda}\left(  t\right)  :=\lambda\left(  \widetilde{y}%
(t),\gamma\right)  ,\text{ }\widetilde{\Lambda}\left(  t\right)  =\exp\left(
-\int_{0}^{t}\widetilde{\lambda}\left(  s\right)  ds\right)  .
\end{align*}
Then%
\[
v_{m}^{\delta}\left(  y,\gamma\right)  \leq%
\begin{array}
[c]{c}%
\mathbb{E}\left[  \int_{0}^{\widetilde{\tau}_{1}\wedge t_{\varepsilon}%
}e^{-\delta t}l_{\gamma}\left(  \widetilde{y}\left(  t\right)
,\widetilde{\alpha}\left(  t\right)  \right)  dt\right] \\
+\mathbb{E}\left[  e^{-\delta\widetilde{\tau}_{1}}v_{m-1}^{\delta}\left(
\widetilde{Y}_{1},\widetilde{\Upsilon}_{1}\right)  \mathbf{1}_{\tau_{1}\leq
t_{\varepsilon}}+e^{-\delta t_{\varepsilon}}v_{m}^{\delta}\left(
\widetilde{y}\left(  t_{\varepsilon}\right)  ,\gamma\right)  \mathbf{1}%
_{\widetilde{\tau}_{1}>t_{\varepsilon}}.\right]
\end{array}
\]
The right-hand member can be written as%
\begin{align}
I_{m}\left(  y,\gamma,\widetilde{\alpha}\right)   &  =\int_{0}^{t_{\varepsilon
}}\widetilde{\lambda}\left(  t\right)  \widetilde{\Lambda}\left(  t\right)
\int_{0}^{t}e^{-\delta s}l_{\gamma}\left(  \widetilde{y}\left(  s\right)
,\widetilde{\alpha}\left(  s\right)  \right)  dsdt\label{I_m}\\
&  +\int_{0}^{t_{\varepsilon}}\widetilde{\lambda}\left(  t\right)
\widetilde{\Lambda}\left(  t\right)  e^{-\delta t}%
{\textstyle\sum\limits_{\gamma^{\prime}\in E\setminus\left\{  \gamma\right\}
}}
v_{m-1}^{\delta}\left(  \widetilde{y}\left(  t\right)  ,\gamma^{\prime
}\right)  Q\left(  \widetilde{y}\left(  t\right)  ,\gamma,\gamma^{\prime
}\right)  dt\nonumber\\
&  +\widetilde{\Lambda}\left(  t_{\varepsilon}\right)  \int_{0}%
^{t_{\varepsilon}}e^{-\delta t}l_{\gamma}\left(  \widetilde{y}\left(
t\right)  ,\widetilde{\alpha}\left(  t\right)  \right)  dt+\widetilde{\Lambda
}\left(  t_{\varepsilon}\right)  e^{-\delta t_{\varepsilon}}v_{m}^{\delta
}\left(  \widetilde{y}\left(  t_{\varepsilon}\right)  ,\gamma\right)
.\nonumber
\end{align}
Then, using the estimates (\ref{Projection_a}) in Lemma \ref{Projection_Lemma}
and recalling that (\textbf{A2}) holds true, one has%
\begin{align}
I_{m}\left(  y,\gamma,\widetilde{\alpha}\right)   &  \leq C\left\vert
x-y\right\vert ^{\frac{1-\kappa}{2}}+\int_{0}^{t_{\varepsilon}}\lambda\left(
t\right)  \Lambda\left(  t\right)  \int_{0}^{t}e^{-\delta s}l_{\gamma}\left(
\widetilde{y}\left(  s\right)  ,\widetilde{\alpha}\left(  s\right)  \right)
dsdt\label{Ineq0}\\
&  +\int_{0}^{t_{\varepsilon}}\lambda\left(  t\right)  \Lambda\left(
t\right)  e^{-\delta t}%
{\textstyle\sum\limits_{\gamma^{\prime}\in E\setminus\left\{  \gamma\right\}
}}
v_{m-1}^{\delta}\left(  \widetilde{y}\left(  t\right)  ,\gamma^{\prime
}\right)  Q\left(  \widetilde{y}\left(  t\right)  ,\gamma,\gamma^{\prime
}\right)  dt\nonumber\\
&  +\Lambda\left(  t_{\varepsilon}\right)  \int_{0}^{t_{\varepsilon}%
}e^{-\delta t}l_{\gamma}\left(  \widetilde{y}\left(  t\right)
,\widetilde{\alpha}\left(  t\right)  \right)  dt+\Lambda\left(  t_{\varepsilon
}\right)  e^{-\delta t_{\varepsilon}}v_{m}^{\delta}\left(  \widetilde{y}%
\left(  t_{\varepsilon}\right)  ,\gamma\right)  ,\nonumber
\end{align}
for some generic constant $C>0$ independent of $\varepsilon$, $\gamma
,y,x,\alpha$ which may change from one line to another. This constant only
depends on the supremum norm and the Lipschitz constants of $\lambda,Q,f$ and
$l.$ Again by (\ref{Projection_a}) and using the assumption (\textbf{A3}), we
get%
\begin{align}
&
{\textstyle\sum\limits_{\gamma^{\prime}\in E\setminus\left\{  \gamma\right\}
}}
v_{m-1}^{\delta}\left(  \widetilde{y}\left(  t\right)  ,\gamma^{\prime
}\right)  Q\left(  \widetilde{y}\left(  t\right)  ,\gamma,\gamma^{\prime
}\right)  -%
{\textstyle\sum\limits_{\gamma^{\prime}\in E\setminus\left\{  \gamma\right\}
}}
v_{m-1}^{\delta}\left(  y\left(  t\right)  ,\gamma^{\prime}\right)  Q\left(
y\left(  t\right)  ,\gamma,\gamma^{\prime}\right) \label{Ineq0.1}\\
&  \leq\omega_{m-1}\left(  C\left\vert x-y\right\vert ^{\frac{1-\kappa}{2}%
}\right)  +%
{\textstyle\sum\limits_{\gamma^{\prime}\in E\setminus\left\{  \gamma\right\}
}}
\left\vert v_{m-1}^{\delta}\left(  y\left(  t\right)  ,\gamma^{\prime}\right)
\right\vert \left\vert Q\left(  \widetilde{y}\left(  t\right)  ,\gamma
,\gamma^{\prime}\right)  -Q\left(  y\left(  t\right)  ,\gamma,\gamma^{\prime
}\right)  \right\vert \nonumber\\
&  \leq\omega_{m-1}\left(  C\left\vert x-y\right\vert ^{\frac{1-\kappa}{2}%
}\right)  +C\left\vert x-y\right\vert ^{\frac{1-\kappa}{2}},\nonumber
\end{align}
for all $t\leq t_{\varepsilon}$. Moreover%
\[
e^{-\delta t_{\varepsilon}}v_{m}^{\delta}\left(  \widetilde{y}\left(
t_{\varepsilon}\right)  ,\gamma^{\prime}\right)  \leq e^{-\delta
t_{\varepsilon}}v_{m}^{\delta}\left(  y\left(  t_{\varepsilon}\right)
,\gamma^{\prime}\right)  +e^{-\delta t_{\varepsilon}}\omega_{m}\left(
C\left\vert x-y\right\vert ^{\frac{1-\kappa}{2}}\right)  .
\]
Returning to (\ref{Ineq0}) and using (\ref{Ineq0.1}) and the previous
relation, we get%
\[
v_{m}^{\delta}\left(  y,\gamma\right)  \leq v_{m}^{\delta}\left(
x,\gamma\right)  +\varepsilon+C\left\vert x-y\right\vert ^{\frac{1-\kappa}{2}%
}+\omega_{m-1}\left(  C\left\vert x-y\right\vert ^{\frac{1-\kappa}{2}}\right)
+e^{-\delta t_{\varepsilon}}\omega_{m}\left(  C\left\vert x-y\right\vert
^{\frac{1-\kappa}{2}}\right)  .
\]
Hence, whenever $\left\vert x-y\right\vert \leq r\leq\rho_{\varepsilon}%
^{\frac{2}{1-\kappa}}$,
\[
\omega_{m}\left(  r,\gamma\right)  \leq\varepsilon+Cr^{\frac{1-\kappa}{2}%
}+\omega_{m-1}\left(  Cr^{\frac{1-\kappa}{2}}\right)  +e^{-\delta
t_{\varepsilon}}\omega_{m}\left(  Cr^{\frac{1-\kappa}{2}}\right)  .
\]
Taking the supremum over $\gamma\in E,$ we can replace $\omega_{m}\left(
r,\gamma\right)  $ with $\omega_{m}\left(  r\right)  .$ We can assume, without
loss of generality, that $C>1$ and the conclusion follows (similar to Lemma
3.3 in \cite{Soner86_2}). Indeed, one considers $r=C^{-\frac{2}{1+\kappa
}\left[  \left(  \frac{1-\kappa}{2}\right)  ^{-n}-1\right]  }$ and iterates in
the previous inequality to get
\begin{align*}
\omega_{m}\left(  C^{-\frac{2}{1+\kappa}\left[  \left(  \frac{1-\kappa}%
{2}\right)  ^{-n}-1\right]  }\right)   &  =\varepsilon\frac{1}{1-e^{-\delta
t_{\varepsilon}}}+e^{-\delta t_{\varepsilon}\left(  n-1\right)  }%
{\textstyle\sum\limits_{k=0}^{n-1}}
\omega_{m-1}\left(  C^{-\frac{2}{1+\kappa}\left[  \left(  \frac{1-\kappa}%
{2}\right)  ^{-k}-1\right]  }\right)  e^{\delta kt_{\varepsilon}}\\
&  +e^{-\delta t_{\varepsilon}\left(  n-1\right)  }%
{\textstyle\sum\limits_{k=0}^{n-1}}
\left(  C^{-\frac{2}{1+\kappa}\left[  \left(  \frac{1-\kappa}{2}\right)
^{-k}-1\right]  }\right)  e^{\delta kt_{\varepsilon}}+2e^{-\delta
t_{\varepsilon}n}\frac{\left\vert l\right\vert _{0}}{\delta},
\end{align*}
for $n$ large enough and recall that $\varepsilon>0$ is arbitrary. Then, by
the recurrence assumption and allowing $n\rightarrow\infty,$ one gets
\[
\omega_{m}\left(  0\right)  \leq\varepsilon\frac{1}{1-e^{-\delta
t_{\varepsilon}}}=\frac{\varepsilon}{1-\frac{\varepsilon\delta}{2\left\vert
f\right\vert _{0}}}.
\]
To complete the proof, one only needs to recall that this inequality holds
true for arbitrary $\varepsilon>0.$
\end{proof}

\begin{remark}
\label{Uniform_remark_three} In fact, all these continuity moduli depend only
the supremum norm and the Lipschitz constants of $\lambda,Q,f$ and $l$ but the
particular choice of the coefficients is irrelevant (see also Remark
\ref{Uniform_remark_two}).
\end{remark}

As a corollary, using the same proof as in the first part of Theorem 3.4 in
\cite{Soner86_2}, we get

\begin{corollary}
Under our assumptions (\textbf{A1-A4, Aa-Ac, Ac')}, the value function
$v^{\delta}\left(  \gamma,\cdot\right)  $ given by%
\[
v^{\delta}\left(  x,\gamma\right)  :=\inf_{\alpha\in\mathcal{A}_{ad}^{%
\mathbb{N}
}}\mathbb{E}\left[
{\textstyle\sum\limits_{n\geq0}}
\int_{\tau_{n}}^{\tau_{n+1}}e^{-\delta t}l_{\Gamma_{\tau_{n}}^{x,\gamma
,\alpha}}\left(  y_{\Gamma_{\tau_{n}}^{\gamma,x,\alpha}}\left(  t;X_{\tau_{n}%
}^{x,\gamma,\alpha},\alpha_{n+1}\right)  ,\alpha_{n+1}\left(  t-\Gamma
_{\tau_{n}}^{x,\gamma,\alpha};X_{\tau_{n}}^{x,\gamma,\alpha},\Gamma_{\tau_{n}%
}^{x,\gamma,\alpha}\right)  \right)  \right]
\]
is bounded and uniformly continuous on $\overline{\mathcal{G}},$ for all
$\gamma\in E.$ Moreover, it satisfies the following Dynamic Programming
Principle :%
\[
v^{\delta}\left(  x,\gamma\right)  =\inf_{\alpha\in\mathcal{A}_{ad}}%
\mathbb{E}\left[
\begin{array}
[c]{c}%
\int_{0}^{T\wedge\tau_{1}}e^{-\delta t}l_{\gamma}\left(  y_{\gamma}\left(
t;x,\alpha\right)  ,\alpha\left(  t;x,\gamma\right)  \right)  dt\\
+e^{-\delta\left(  T\wedge\tau_{1}\right)  }v^{\delta}\left(  y_{\gamma
}\left(  T\wedge\tau_{1};x,\alpha\right)  ,\Gamma_{T\wedge\tau_{1}}%
^{x,\gamma,\alpha}\right)
\end{array}
\right]  ,
\]
for all $T>0$ and all $\left(  \gamma,x\right)  \in E\times\overline
{\mathcal{G}}.$
\end{corollary}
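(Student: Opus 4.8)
The plan is to realize $v^{\delta}$ as the uniform limit of the iterates $v_{m}^{\delta}$ and then to transfer both the regularity and the dynamic programming principle from the iterates to the limit. First I would introduce the one-jump dynamic programming operator $T$ acting on bounded functions $\phi:\overline{\mathcal{G}}\times E\rightarrow\mathbb{R}$ by
\[
\left(T\phi\right)\left(x,\gamma\right):=\inf_{\alpha\in\mathcal{A}_{ad}}\mathbb{E}\left[\int_{0}^{\tau_{1}}e^{-\delta t}l_{\gamma}\left(X_{t}^{x,\gamma,\alpha},\alpha\left(t;x,\gamma\right)\right)dt+e^{-\delta\tau_{1}}\phi\left(Y_{1},\Upsilon_{1}\right)\right],
\]
so that, directly from the definition of the iterated value functions, $v_{m}^{\delta}=Tv_{m-1}^{\delta}=T^{m}v_{0}^{\delta}$. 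Using the bound $\lambda\leq C$ from (\textbf{A2}), the first jump time $\tau_{1}$ dominates stochastically an exponential variable of parameter $C$, whence $\sup_{x,\gamma,\alpha}\mathbb{E}\left[e^{-\delta\tau_{1}}\right]\leq\frac{C}{C+\delta}<1$. Since $\left\vert T\phi-T\psi\right\vert _{0}\leq\left\vert \phi-\psi\right\vert _{0}\sup_{x,\gamma,\alpha}\mathbb{E}\left[e^{-\delta\tau_{1}}\right]$, the operator $T$ is a contraction for the supremum norm, so $\left(v_{m}^{\delta}\right)_{m}=\left(T^{m}v_{0}^{\delta}\right)_{m}$ converges uniformly to the unique bounded fixed point $v^{\ast}$ of $T$, and the a priori bound $\left\vert v_{m}^{\delta}\right\vert \leq\frac{\left\vert l\right\vert _{0}}{\delta}$ passes to the limit.

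Next I would identify $v^{\ast}$ with the value function $v^{\delta}$. This amounts to establishing the dynamic programming principle at the first jump time for the full problem, namely $v^{\delta}=Tv^{\delta}$: conditioning on $\left(\tau_{1},Y_{1},\Upsilon_{1}\right)$ and invoking the time-homogeneous (strong) Markov structure of the PDMP, the continuation cost from the post-jump position is again the full value $v^{\delta}\left(Y_{1},\Upsilon_{1}\right)$, while the infimum over control sequences in $\mathcal{A}_{ad}^{\mathbb{N}}$ splits into the control on the first interval and the control sequence governing the continuation. I expect this identification to be the main obstacle, since it requires a careful measurable-selection argument for the splitting of the infimum over infinite control sequences and its interchange with the conditional expectation; this is, however, precisely the step carried out in \cite{Soner86_2}, and only the PDMP structure recalled in Section \ref{constr+ass} is needed. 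By uniqueness of the fixed point one gets $v^{\ast}=v^{\delta}$, and therefore $v_{m}^{\delta}\rightarrow v^{\delta}$ uniformly on $\overline{\mathcal{G}}\times E$.

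The regularity statement is then immediate. Each $v_{m}^{\delta}\left(\cdot,\gamma\right)$ is uniformly continuous on the compact set $\overline{\mathcal{G}}$ by the previous theorem (with moduli controlled as in Remark \ref{Uniform_remark_three}), the set $E$ is finite, and the convergence $v_{m}^{\delta}\rightarrow v^{\delta}$ is uniform; a uniform limit of continuous functions on a compact set is continuous, hence uniformly continuous. Thus $v^{\delta}\left(\cdot,\gamma\right)$ is bounded and uniformly continuous, uniformly in $\gamma\in E$.

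Finally, I would derive the announced dynamic programming principle by letting $m\rightarrow\infty$ in Lemma \ref{LemmaDPPItterate}. For fixed $T>0$ the right-hand side there involves $v_{m-1}^{\delta}$ and $v_{m}^{\delta}$ weighted by discount factors bounded by $1$; since $\left\vert v_{m-1}^{\delta}-v^{\delta}\right\vert _{0}$ and $\left\vert v_{m}^{\delta}-v^{\delta}\right\vert _{0}$ tend to $0$, one may interchange the limit with both the infimum over $\mathcal{A}_{ad}$ and the expectation, replacing $v_{m-1}^{\delta}$ and $v_{m}^{\delta}$ by $v^{\delta}$. It then remains to repackage the resulting terms pathwise: on $\left\{\tau_{1}\leq T\right\}$ one has $T\wedge\tau_{1}=\tau_{1}$, $y_{\gamma}\left(T\wedge\tau_{1};x,\alpha\right)=Y_{1}$ and $\Gamma_{T\wedge\tau_{1}}^{x,\gamma,\alpha}=\Upsilon_{1}$, whereas on $\left\{\tau_{1}>T\right\}$ one has $T\wedge\tau_{1}=T$ and $\Gamma_{T\wedge\tau_{1}}^{x,\gamma,\alpha}=\gamma$; combining the two contributions yields exactly the stated identity with the single terminal term $e^{-\delta\left(T\wedge\tau_{1}\right)}v^{\delta}\left(y_{\gamma}\left(T\wedge\tau_{1};x,\alpha\right),\Gamma_{T\wedge\tau_{1}}^{x,\gamma,\alpha}\right)$.
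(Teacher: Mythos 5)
Your proposal is correct and follows essentially the same route as the paper, which realizes $v^{\delta}$ as the uniform limit of the iterates $v_{m}^{\delta}$ and passes to the limit in Lemma \ref{LemmaDPPItterate}; your contraction-mapping packaging of the convergence (via $\sup\mathbb{E}\left[e^{-\delta\tau_{1}}\right]\leq\frac{C}{C+\delta}<1$) is just a reformulation of the paper's observation that boundedness of $\lambda$ prevents accumulation of the jump times. The one step you flag as delicate --- the identification $v^{\delta}=Tv^{\delta}$, i.e. the splitting of the infimum over $\mathcal{A}_{ad}^{\mathbb{N}}$ at the first jump time together with the measurable-selection argument --- is precisely the part the paper also delegates to the first part of Theorem 3.4 in \cite{Soner86_2}, so nothing is missing relative to the paper's own (omitted) argument.
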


Again, once we have established the ingredients of uniform continuity in the
previous theorem, the proof is identical with the first part of Theorem 3.4 in
\cite{Soner86_2} and will be omitted from our (long enough) paper. One
iterates Lemma \ref{LemmaDPPItterate} to get $v_{m}^{\delta}$ and recalls that
$\lambda$ is bounded and, thus, the jumping times cannot accumulate.

\section{Existence of the viscosity solution\label{Section_Existence}}

At this point, we introduce the following Hamilton-Jacobi integrodifferential
system%
\begin{equation}
\delta v^{\delta}\left(  x,\gamma\right)  +\sup_{a\in A_{\gamma,x}}\left\{
\begin{array}
[c]{c}%
-\left\langle f_{\gamma}\left(  x,a\right)  ,Dv^{\delta}\left(  x,\gamma
\right)  \right\rangle -l_{\gamma}\left(  x,a\right) \\
-\lambda\left(  x,\gamma,a\right)
{\textstyle\sum\limits_{\gamma^{\prime}\in E}}
Q\left(  x,\gamma,\gamma^{\prime},a\right)  \left(  v^{\delta}\left(
x,\gamma^{\prime}\right)  -v^{\delta}\left(  x,\gamma\right)  \right)
\end{array}
\right\}  =0. \label{HJ}%
\end{equation}

\subsection{Relaxing the dynamics}

In addition to the standard assumptions \textbf{(Aa-Ac)}, we will need the following.

\textbf{(Ad) }For every $1\leq j\leq N,$ every $\gamma\in E$ and every $x\in
J_{j},$ there exists $\theta>0$ such that, whenever $\alpha\in\mathcal{A}%
_{ad},$ one has $\alpha\left(  t;x,\gamma\right)  \in A^{\gamma,j}$ for almost
all $t\in\left[  0,\theta\right]  .$

For every $x\mathcal{\in}\overline{\mathcal{G}}$, we let $\mathcal{T}%
_{x}\left(  \overline{\mathcal{G}}\right)  $ denote the set of tangent
directions to $\overline{\mathcal{G}}$ at $x$ : $\mathcal{T}_{x}\left(
\overline{\mathcal{G}}\right)  =%
\mathbb{R}
e_{j}$ if $x\in J_{j},$ $\mathcal{T}_{e_{j}}\left(  \overline{\mathcal{G}%
}\right)  =%
\mathbb{R}
_{-}e_{j}$ and $\mathcal{T}_{O}\left(  \overline{\mathcal{G}}\right)
=\underset{1\leq j\leq N}{\cup}%
\mathbb{R}
_{+}e_{j}$. The set $\mathcal{M}_{+}\left(  E\right)  $ denotes the family of
(positive) measures $\zeta=\left(  \zeta\left(  \gamma\right)  \right)
_{\gamma\in E}$ $\in%
\mathbb{R}
_{+}^{E}.$ The following standard notations will be employed throughout the
section.%
\begin{align*}
\overline{FL}\left(  x,\gamma\right)   &  :=\left\{
\begin{array}
[c]{c}%
\left(  \xi,\zeta,\eta\right)  \in\mathcal{T}_{x}\left(  \overline
{\mathcal{G}}\right)  \times\mathcal{M}_{+}\left(  E\right)  \times%
\mathbb{R}
:\exists\left(  \alpha_{n}\right)  _{n}\subset\mathcal{A}_{ad},\text{ }\left(
t_{n}\right)  _{n}\subset%
\mathbb{R}
_{+},\text{ s.t.}\\
\underset{n\rightarrow\infty}{\lim}t_{n}=0,\underset{n\rightarrow\infty}{\lim
}\frac{1}{t_{n}}\int_{0}^{t_{n}}f_{\gamma}\left(  x,\alpha_{n}\left(
s;x,\gamma\right)  \right)  ds=\xi,\\
\underset{n\rightarrow\infty}{\lim}\frac{1}{t_{n}}\int_{0}^{t_{n}}%
\lambda\left(  x,\gamma,\alpha_{n}\left(  s;x,\gamma\right)  \right)  Q\left(
x,\gamma,\alpha_{n}\left(  s;x,\gamma\right)  \right)  ds=\zeta,\\
\underset{n\rightarrow\infty}{\lim}\frac{1}{t_{n}}\int_{0}^{t_{n}}l_{\gamma
}\left(  x,\alpha_{n}\left(  s;x,\gamma\right)  \right)  ds=\eta
\end{array}
\right\}  ,\\
\overline{F}\left(  x,\gamma\right)   &  :=\left\{
\begin{array}
[c]{c}%
\left(  \xi,\zeta\right)  \in\mathcal{T}_{x}\left(  \mathcal{G}\right)
\times\mathcal{M}_{+}\left(  E\right)  :\exists\left(  \alpha_{n}\right)
_{n}\subset\mathcal{A}_{ad},\text{ }\left(  t_{n}\right)  _{n}\subset%
\mathbb{R}
_{+},\text{ s.t.}\\
\underset{n\rightarrow\infty}{\lim}t_{n}=0,\underset{n\rightarrow\infty}{\lim
}\frac{1}{t_{n}}\int_{0}^{t_{n}}f_{\gamma}\left(  x,\alpha_{n}\left(
s;x,\gamma\right)  \right)  ds=\xi,\\
\underset{n\rightarrow\infty}{\lim}\frac{1}{t_{n}}\int_{0}^{t_{n}}%
\lambda\left(  x,\gamma,\alpha_{n}\left(  s;x,\gamma\right)  \right)  Q\left(
x,\gamma,\alpha_{n}\left(  s;x,\gamma\right)  \right)  ds=\zeta
\end{array}
\right\}  ,\\
\overline{fl}\left(  x,\gamma,a\right)   &  :=\left(  f_{\gamma}\left(
x,a\right)  ,\lambda\left(  x,\gamma,a\right)  Q\left(  x,\gamma,a\right)
,l_{\gamma}\left(  x,a\right)  \right)  .
\end{align*}

\begin{remark}
\label{Rem_relax}(a) The reader is invited to note that, in the previous
notations, $"\left(  \alpha_{n}\right)  _{n}\subset\mathcal{A}_{ad}"$ (resp.
$"\alpha\left(  s;x,\gamma\right)  "$) and can be replaced by $"\left(
\alpha_{n}\right)  _{n}\subset\mathcal{A}_{\gamma,x}"$ (resp. $"\alpha\left(
s\right)  "$, see also the second part of Remark \ref{RemarkAad}).

(b) Also, the assumptions on the coefficients imply that
\[
\underset{n\rightarrow\infty}{\lim}\frac{1}{t_{n}}\int_{0}^{t_{n}}f_{\gamma
}\left(  x,\alpha_{n}\left(  s;x,\gamma\right)  \right)
ds=\underset{n\rightarrow\infty}{\lim}\frac{1}{t_{n}}\int_{0}^{t_{n}}%
f_{\gamma}\left(  y_{\gamma}\left(  s;x,\alpha_{n}\left(  s;x,\gamma\right)
\right)  ,\alpha_{n}\left(  s;x,\gamma\right)  \right)  ds,
\]
and similar assertions hold true in the definition of $\eta$ and $\zeta.$

(c) Finally, we have dropped the dependency on $\lambda Q$ in these terms for
the sake of simplicity. One should have written $\overline{f\left(  \lambda
Q\right)  l}$, etc.
\end{remark}

We begin with the following technical result.

\begin{lemma}
\label{Lemma_interior}We assume (\textbf{Aa-Ad}) and (\textbf{A1-A4}) to hold
true. For every $x\in\mathcal{G\smallsetminus}\left\{  O\right\}  ,$ the
following equality holds true%
\[
\overline{FL}(x,\gamma)=\overline{co}\overline{fl}\left(  x,\gamma\right)
:=\overline{co}\left\{  \overline{fl}\left(  x,\gamma,a\right)  :a\in
A_{\gamma,x}\right\}  .
\]
Moreover, for every $j\leq N,$%
\[
\overline{FL}(e_{j},\gamma)\subset\overline{co}\overline{fl}\left(
e_{j},\gamma\right)  :=\overline{co}\left\{  \overline{fl}\left(  e_{j}%
,\gamma,a\right)  :a\in A^{\gamma,j}\right\}  \cap\left(  \mathcal{%
\mathbb{R}
}_{-}e_{j}\times\mathcal{M}_{+}\left(  E\right)  \times%
\mathbb{R}
\right)  .
\]

\end{lemma}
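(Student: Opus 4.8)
The plan is to establish the two inclusions separately, exploiting the fact that in the integrals defining $\overline{FL}$ the base point $x$ (and $\gamma$) is frozen: $\overline{fl}(x,\gamma,\alpha_{n}(s;x,\gamma))$ is simply the image of a measurable $A$-valued selection under the fixed map $a\mapsto\overline{fl}(x,\gamma,a)$. Thus the problem reduces to the classical fact that the set of averages of a measurable selection with values in a bounded set $S\subset\mathbb{R}^{m}\times\mathcal{M}_{+}(E)\times\mathbb{R}$ has closure equal to $\overline{co}(S)$. By Remark \ref{Rem_relax}(b) and the continuity assumptions (\textbf{A1--A4}), freezing $x$ yields the same limits as integrating along the trajectory, so no further care about the flow is needed in the averages themselves.

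First I would treat the interior case $x\in\mathcal{G}\setminus\{O\}$, say $x\in J_{j}$. For the inclusion $\overline{co}\,\overline{fl}(x,\gamma)\subseteq\overline{FL}(x,\gamma)$, each generator $\overline{fl}(x,\gamma,a)$ with $a\in A_{\gamma,x}=A^{\gamma,j}$ is attained by the constant control $\alpha_{n}\equiv a$ (embedded in $\mathcal{A}_{ad}$ as in Remark \ref{RemarkAad}(b)) and any $t_{n}\downarrow 0$; since $a$ is locally admissible the trajectory stays on $[0,1]e_{j}$, whence $f_{\gamma}(x,a)\in\mathbb{R}e_{j}=\mathcal{T}_{x}(\overline{\mathcal{G}})$. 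For a finite convex combination $\sum_{i}\mu_{i}\overline{fl}(x,\gamma,a_{i})$ I would use a chattering control that on $[0,t_{n}]$ spends a fraction $\mu_{i}$ of the time equal to $a_{i}$; because $x$ is frozen, the corresponding average equals the convex combination exactly, and because all $a_{i}\in A^{\gamma,j}$ keep the motion colinear with $e_{j}$, the chattering trajectory remains on the edge for $t_{n}$ small and prolongs to a globally admissible control (Proposition \ref{Prop_nonempty} and Remark \ref{RemarkAad}). This gives $co\,\overline{fl}(x,\gamma)\subseteq\overline{FL}(x,\gamma)$; closedness of $\overline{FL}(x,\gamma)$ (a diagonal extraction over the defining sequences, together with closedness of $\mathcal{T}_{x}(\overline{\mathcal{G}})$) then upgrades this to the closed convex hull.

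For the reverse inclusion $\overline{FL}(x,\gamma)\subseteq\overline{co}\,\overline{fl}(x,\gamma)$, the decisive ingredient is assumption (\textbf{Ad}): for $x\in J_{j}$ there is $\theta>0$ such that every $\alpha\in\mathcal{A}_{ad}$ satisfies $\alpha(s;x,\gamma)\in A^{\gamma,j}=A_{\gamma,x}$ for a.e. $s\in[0,\theta]$. Hence, once $t_{n}<\theta$, the integrand $\overline{fl}(x,\gamma,\alpha_{n}(s;x,\gamma))$ takes values in the bounded set $\{\overline{fl}(x,\gamma,a):a\in A_{\gamma,x}\}$, so each average lies in its closed convex hull; passing to the limit and using that $\overline{co}\,\overline{fl}(x,\gamma)$ is closed gives the claim. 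Combining the two inclusions yields the asserted equality at interior points.

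Finally, at $x=e_{j}$ only the inclusion is claimed. Here $\mathcal{T}_{e_{j}}(\overline{\mathcal{G}})=\mathbb{R}_{-}e_{j}$, so the constraint $\xi\in\mathbb{R}_{-}e_{j}$ is already built into the definition of $\overline{FL}(e_{j},\gamma)$. The same confinement argument as above---now justified by the fact that an admissible trajectory issued from $e_{j}$ remains on $[0,1]e_{j}$ for small times, so that its control takes values in $A^{\gamma,j}$ for a.e. small $s$ (by the same reasoning underlying (\textbf{Ad}))---shows that the averages lie in $\overline{co}\{\overline{fl}(e_{j},\gamma,a):a\in A^{\gamma,j}\}$, while the limiting first component lies in $\mathbb{R}_{-}e_{j}$; this places the limit in the stated intersection. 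I expect the main obstacle to be the admissibility bookkeeping in the $\supseteq$ direction at interior points, i.e. verifying that the relaxed (chattering) controls keep the state exactly on the edge and extend to elements of $\mathcal{A}_{ad}$; once the base point is frozen and (\textbf{Ad}) is invoked, the two averaging steps are routine.
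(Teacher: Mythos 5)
Your proposal is correct and follows essentially the same route as the paper: assumption (\textbf{Ad}) confines the control values to $A_{\gamma,x}$ for small times so that the frozen-point averages land in $\overline{co}\,\overline{fl}(x,\gamma)$, while the reverse inclusion is obtained from chattering (piecewise constant) controls realizing each finite convex combination exactly, together with closedness of $\overline{FL}(x,\gamma)$; at $e_{j}$ the extra constraint $\xi\in\mathbb{R}_{-}e_{j}$ is secured just as in the paper (which identifies the trajectory average with $\frac{y_{\gamma}(t_{n};e_{j},\alpha_{n})-e_{j}}{t_{n}}$ and invokes Remark \ref{Rem_relax}(b), equivalent to your tangent-cone observation). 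No substantive differences or gaps.
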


\begin{proof}
Without loss of generality, we first assume that $x\in J_{1}$. It is clear
that
\[
\overline{FL}(x,\gamma)\subset\overline{co}\left\{  \overline{fl}\left(
x,\gamma,a\right)  :a\in A_{\gamma,x}\right\}  .
\]
Indeed, it suffices to use the assumption \textbf{(Ad) }to get the existence
of some $\theta>0$ such that whenever $\alpha\in\mathcal{A}_{ad},$ one has
$\alpha\left(  t;x,\gamma\right)  \in A^{\gamma,1}$ for almost all
$t\in\left[  0,\theta\right]  .$ Then, for every $\left(  \alpha_{n}\right)
_{n}\subset\mathcal{A}_{ad},$ and every sequence $\left(  t_{n}\right)
_{n}\subset%
\mathbb{R}
_{+}$ such that $t_{n}\leq\theta,$ one has%
\[
\left(
\begin{array}
[c]{c}%
\frac{1}{t_{n}}\int_{0}^{t_{n}}f_{\gamma}\left(  x,\alpha_{n}\left(
s;x,\gamma\right)  \right)  ds\\
\frac{1}{t_{n}}\int_{0}^{t_{n}}\lambda\left(  x,\gamma,\alpha_{n}\left(
s;x,\gamma\right)  \right)  Q\left(  x,\gamma,\alpha_{n}\left(  s;x,\gamma
\right)  \right) \\
\frac{1}{t_{n}}\int_{0}^{t_{n}}l_{\gamma}\left(  x,\alpha_{n}\left(
s;x,\gamma\right)  \right)
\end{array}
\right)  \in\overline{co}\left\{  \overline{fl}\left(  x,\gamma,a\right)
:a\in A_{\gamma,x}\right\}  .
\]

If $x=e_{1},$ then
\[
\frac{1}{t_{n}}\int_{0}^{t_{n}}f_{\gamma}\left(  y_{\gamma}\left(
s;e_{1},\alpha_{n}\right)  ,\alpha_{n}\left(  s;e_{1},\gamma\right)  \right)
ds=\frac{y_{\gamma}\left(  t_{n};e_{1},\alpha_{n}\right)  -e_{1}}{t_{n}}\in%
\mathbb{R}
_{-}e_{1}.
\]
Hence, invoking part (b) of the Remark \ref{Rem_relax}, it follows that
\[
\overline{FL}(e_{1},\gamma)\subset\overline{co}\left\{  \overline{fl}\left(
e_{1},\gamma,a\right)  :a\in A^{\gamma,1}\right\}  \cap\left(  \mathcal{%
\mathbb{R}
}_{-}e_{1}\times\mathcal{M}_{+}\left(  E\right)  \times%
\mathbb{R}
\right)  .
\]
For the converse inclusion, we fix $x\in\mathcal{G\smallsetminus}\left\{
O\right\}  $. One begins by noticing that $\overline{FL}(x,\gamma)$ is closed.
Hence, it suffices to prove that
\[
co\left\{  \overline{fl}\left(  x,\gamma,a\right)  :a\in A^{\gamma,1}\right\}
\subset\overline{FL}(x,\gamma).
\]
We consider $\lambda_{i}\geq0,i\in\left\{  1,..,K\right\}  $ such that $%
{\textstyle\sum\limits_{i=1}^{K}}
\lambda_{i}=1$ and $a_{i}\in A^{\gamma,1},$ pour tout $i\in\left\{
1,..,K\right\}  .$ Since $x\in J_{1},$ whenever $t_{n}<\frac{\min\left(
\left\vert x\right\vert ,\left\vert x-e_{1}\right\vert \right)  }{\max\left(
\left\vert f\right\vert _{0},1\right)  },$ an admissible control $\alpha
\in\mathcal{A}_{\gamma,x}$ is obtained by setting $\alpha_{n}\left(  t\right)
=%
{\textstyle\sum\limits_{i=1}^{K}}
a_{i}1_{\left[  \left(
{\textstyle\sum\limits_{j=1}^{i-1}}
\lambda_{j}\right)  t_{n},\left(
{\textstyle\sum\limits_{j=1}^{i}}
\lambda_{j}\right)  t_{n}\right)  }\left(  t\right)  $ and the conclusion follows.
\end{proof}

The family of admissible test functions will be given as in
\cite{AchdouCamilliCutri2013} by $\varphi\in C_{b}\left(  \overline
{\mathcal{G}}\right)  $ for which $\varphi\mid_{\overline{J_{j}}}\in C_{b}%
^{1}\left(  \overline{J_{j}}\right)  ,$ for all $j=1,2,...,N.$ If $x\in
J_{j},$ we recall that%
\[
D\varphi\left(  x;\xi\right)  :=\lim_{t\rightarrow0}\frac{\varphi\left(
x+t\xi\right)  -\varphi\left(  x\right)  }{t},\text{ for all }\xi\in\mathcal{%
\mathbb{R}
}e_{j}.
\]
We also recall that
\[
D\varphi\left(  e_{j};\xi\right)  :=\lim_{t\rightarrow0+}\frac{\varphi\left(
e_{j}+t\xi\right)  -\varphi\left(  x\right)  }{t},\text{ for all }\xi
\in\mathcal{%
\mathbb{R}
}_{-}e_{j}%
\]
and
\[
D\varphi\left(  O;\xi\right)  :=\lim_{t\rightarrow0+}\frac{\varphi\left(
t\xi\right)  -\varphi\left(  O\right)  }{t},\text{ whenever }\xi\in\mathcal{%
\mathbb{R}
}_{+}e_{j}.
\]
If $\varkappa:\left[  0,1\right]  \longrightarrow\mathcal{G}$ is continuous
and $\left(  t_{n}\right)  _{n}\subset\left(  0,1\right]  $ is such that
$\underset{n\rightarrow\infty}{\lim}t_{n}=0$ and
\[
\lim_{n\rightarrow\infty}\frac{\varkappa\left(  t_{n}\right)  }{t_{n}}=\xi,
\]
we have
\[
D\varphi\left(  O;\xi\right)  :=\lim_{n\rightarrow\infty}\frac{\varphi\left(
\varkappa\left(  t_{n}\right)  \right)  -\varphi\left(  O\right)  }{t_{n}}%
\]
and one notes that this limit does not depend on the choice of $\varkappa.$ To
simplify the notations, we will also write $\left\langle \xi,D\varphi\left(
x\right)  \right\rangle $ instead of $D\varphi\left(  x;\xi\right)  $. One
notices easily that the choice of test functions is equivalent to taking a
family of test functions $\varphi_{j}\in C_{b}^{1}\left(  \overline{J_{j}%
}\right)  $ such that $\varphi_{j}\left(  O\right)  =\varphi_{j^{\prime}%
}\left(  O\right)  ,$ for all $1\leq j,j^{\prime}\leq N.$ For further details
of this family of test functions, the reader is referred to \cite[Subsection
3.1]{AchdouCamilliCutri2013}.

We now introduce the definition of the generalized solution of the system
(\ref{HJ}).

\begin{definition}
\label{Def_generalized_solution}A bounded, upper semicontinuous function $V$
is said to be a generalized viscosity subsolution of (\ref{HJ}) if, for every
$\left(  \gamma_{0},x_{0}\right)  \in E\times\mathcal{G}$ whenever $\varphi\in
C_{b}\left(  \overline{\mathcal{G}}\right)  $ for which $\varphi
\mid_{\overline{J_{j}}}\in C_{b}^{1}\left(  \overline{J_{j}}\right)  ,$ for
all $j=1,2,...,N$ is a test function such that $x_{0}\in Argmax\left(
V\left(  \cdot,\gamma_{0}\right)  -\varphi\left(  \cdot\right)  \right)  ,$
one has
\[
\delta V\left(  x_{0},\gamma_{0}\right)  +\sup_{\left(  \xi,\zeta,\eta\right)
\in\overline{FL}\left(  x_{0},\gamma_{0}\right)  }\left\{
\begin{array}
[c]{c}%
-\left\langle D\varphi\left(  x_{0};\xi\right)  \right\rangle -\eta\\
-%
{\textstyle\sum\limits_{\gamma^{\prime}\in E}}
\zeta\left(  \gamma^{\prime}\right)  \left(  V\left(  x_{0},\gamma^{\prime
}\right)  -V\left(  x_{0},\gamma_{0}\right)  \right)
\end{array}
\right\}  \leq0.
\]

A bounded, lower semicontinuous function $V$ is said to be a generalized
viscosity supersolution of (\ref{HJ}) if, for every $\left(  \gamma_{0}%
,x_{0}\right)  \in E\times\overline{\mathcal{G}}$ whenever $\varphi\in
C_{b}\left(  \overline{\mathcal{G}}\right)  $ for which $\varphi
\mid_{\overline{J_{j}}}\in C_{b}^{1}\left(  \overline{J_{j}}\right)  ,$ for
all $j=1,2,...,N$ is a test function such that $x_{0}\in Arg\min\left(
V\left(  \cdot,\gamma_{0}\right)  -\varphi\left(  \cdot\right)  \right)  ,$
one has
\[
\delta V\left(  x_{0},\gamma_{0}\right)  +\sup_{\left(  \xi,\zeta,\eta\right)
\in\overline{FL}\left(  x_{0},\gamma_{0}\right)  }\left\{
\begin{array}
[c]{c}%
-\left\langle D\varphi\left(  x;\xi\right)  \right\rangle -\eta\\
-%
{\textstyle\sum\limits_{\gamma^{\prime}\in E}}
\zeta\left(  \gamma^{\prime}\right)  \left(  V\left(  x_{0},\gamma^{\prime
}\right)  -V\left(  x_{0},\gamma_{0}\right)  \right)
\end{array}
\right\}  \geq0.
\]

\end{definition}

\subsection{(A) Viscosity solution}

We are now able to state and proof the main result of the section.

\begin{theorem}
\label{Th_existence_solution}We assume (\textbf{Aa-Ad, Ac'}) and
(\textbf{A1-A4}) to hold true. Then, the value function $v^{\delta}$ is a
bounded uniformly continuous generalized solution of \ (\ref{HJ}).
\end{theorem}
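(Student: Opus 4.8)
The plan is to prove that $v^{\delta}$ is both a subsolution and a supersolution of the system \eqref{HJ} in the generalized sense of Definition \ref{Def_generalized_solution}, exploiting the Dynamic Programming Principle established in the Corollary together with the characterization of the relaxed dynamics $\overline{FL}$ given by Lemma \ref{Lemma_interior}. The boundedness and uniform continuity of $v^{\delta}$ are already guaranteed by the Corollary, so the entire work lies in the viscosity inequalities. I would fix $(\gamma_{0},x_{0})\in E\times\overline{\mathcal{G}}$ and a test function $\varphi$ of the admissible class touching $v^{\delta}(\cdot,\gamma_{0})$ appropriately, then use the DPP over a short horizon $T>0$ (taking $T\wedge\tau_{1}$) and let $T\to 0$. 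The key is that over $[0,T\wedge\tau_1)$ the mode is frozen at $\gamma_0$ and the continuous component evolves deterministically along $y_{\gamma_0}(\cdot;x,\alpha)$, while the jump contributes through $\lambda$ and $Q$ linearized at rate $\zeta$.

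\medskip

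First I would treat the subsolution inequality. Starting from the DPP equality, for any admissible control I subtract $v^{\delta}(x_0,\gamma_0)=\varphi(x_0)$, divide by $T$, and pass to the limit $T\to 0$. The deterministic cost term produces $\frac{1}{T}\int_0^{T}e^{-\delta t}l_{\gamma_0}(y_{\gamma_0}(t;x_0,\alpha),\alpha)\,dt\to \eta$ along a suitable subsequence, the transport term $\frac{1}{T}\big(e^{-\delta(T\wedge\tau_1)}\varphi(y_{\gamma_0}(T\wedge\tau_1;x_0,\alpha))-\varphi(x_0)\big)$ produces $-\delta\varphi(x_0)+\langle D\varphi(x_0;\xi)\rangle$, and the jump term $\mathbb{E}[e^{-\delta\tau_1}v^{\delta}_{}(Y_1,\Upsilon_1)\mathbf{1}_{\tau_1\le T}]$ produces, after linearizing $\mathbb{P}(\tau_1\le T)\approx \lambda T$ and using the transition law $Q$, the integrand $\sum_{\gamma'\in E}\zeta(\gamma')\big(v^{\delta}(x_0,\gamma')-v^{\delta}(x_0,\gamma_0)\big)$. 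Because $\varphi$ realizes a local maximum of $v^{\delta}(\cdot,\gamma_0)-\varphi$, the replacement of $v^{\delta}$ by $\varphi$ in the deterministic transport term goes the correct way. Taking the infimum over controls corresponds, via Lemma \ref{Lemma_interior}, to taking the supremum over $(\xi,\zeta,\eta)\in\overline{FL}(x_0,\gamma_0)=\overline{co}\,\overline{fl}(x_0,\gamma_0)$, yielding the subsolution inequality. The supersolution inequality is symmetric: one selects near-optimal controls, uses the local-minimum property, and passes to the limit to obtain the reverse inequality.

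\medskip

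The main obstacle will be the boundary/intersection analysis at $x_0=e_j$ and especially at $x_0=O$, where $\overline{FL}$ is constrained to tangent cones and the DPP couples different edges. At $e_j$ the admissibility forces the tangent direction into $\mathbb{R}_-e_j$, matching the inclusion proved in Lemma \ref{Lemma_interior}; I would verify that the one-sided directional derivative $D\varphi(e_j;\xi)$ with $\xi\in\mathbb{R}_-e_j$ is exactly what the limiting transport term produces, so no extra inequality is lost. At $O$ the delicate point is that the limit $\lim_{n}\varphi(\varkappa(t_n))/t_n$ defining $D\varphi(O;\xi)$ is independent of the approach curve (as noted after the definition of the test functions), so the frozen-mode deterministic flow entering any edge $\mathbb{R}_+e_j$ contributes consistently; here the controllability assumptions (\textbf{Ab}) guaranteeing admissible directions in both the active and inactive regimes, together with (\textbf{Ac}, \textbf{Ac'}) ensuring the cost and jump data are well defined at $O$ in the inactive case, are exactly what make the passage to the limit legitimate. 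I would also need to justify the exchange of limit and expectation/infimum; this is routine given the uniform boundedness of $\lambda$ (so jumps do not accumulate), the uniform continuity of $v^{\delta}$ established in the Corollary, and the Lipschitz bounds in \textbf{(A1)--(A4)}, which control all the error terms by moduli tending to zero as $T\to 0$.
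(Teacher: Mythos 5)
Your proposal follows essentially the same route as the paper: apply the DPP over a vanishing horizon $T\wedge\tau_{1}$, divide by the horizon, and linearize the jump contribution via $\mathbb{P}\left(  \tau_{1}\leq T\right)  \approx\lambda T$, obtaining the subsolution inequality control-by-control and the supersolution inequality from near-optimal controls. Two small remarks on bookkeeping: Lemma \ref{Lemma_interior} is not needed here, since Definition \ref{Def_generalized_solution} is phrased directly in terms of $\overline{FL}$ (for the subsolution one takes, for each $\left(  \xi,\zeta,\eta\right)  \in\overline{FL}\left(  x_{0},\gamma_{0}\right)  $, the realizing sequence $\left(  \alpha_{n},t_{n}\right)  $ and applies the "cheap" half of the DPP with horizon $t_{n}$, so the claimed equality with $\overline{co}\,\overline{fl}$ — which the paper only proves away from $O$ — is never used); and for the supersolution the near-optimality error must vanish after division by the horizon, which the paper arranges by taking an $\varepsilon$-optimal control over the horizon $\sqrt{\varepsilon}$.
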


\begin{proof}
We begin with the proof of the subsolution condition. Let us fix $\left(
\gamma_{0},x_{0}\right)  \in E\times\left(  \mathcal{G}\smallsetminus\left\{
O\right\}  \right)  $ and consider a regular test function $\varphi$ such that
$x_{0}\in Argmax\left(  v^{\delta}\left(  \cdot,\gamma_{0}\right)
-\varphi\left(  \cdot\right)  \right)  $. Then
\[
\varphi\left(  x_{0}\right)  -\varphi\left(  x\right)  \leq v^{\delta}\left(
x_{0},\gamma_{0}\right)  -v^{\delta}\left(  x,\gamma_{0}\right)  ,
\]
for all $x\in\overline{\mathcal{G}}$. We can assume, without loss of
generality, that $\varphi\left(  x_{0}\right)  =v^{\delta}\left(  x_{0}%
,\gamma_{0}\right)  .$ Let us consider $\left(  \xi,\zeta,\eta\right)
\in\overline{FL}\left(  x_{0},\gamma_{0}\right)  .$ Then, there exist $\left(
\alpha_{n}\right)  _{n}\subset\mathcal{A}_{ad},$ $\left(  t_{n}\right)
_{n}\subset%
\mathbb{R}
_{+},$ s.t.$\underset{n\rightarrow\infty}{\lim}t_{n}=0$ and%
\[
\left\{
\begin{array}
[c]{l}%
\underset{n\rightarrow\infty}{\lim}\frac{1}{t_{n}}\int_{0}^{t_{n}}%
f_{\gamma_{0}}\left(  x_{0},\alpha_{n}\left(  s;x_{0},\gamma_{0}\right)
\right)  ds=\xi,\\
\underset{n\rightarrow\infty}{\lim}\frac{1}{t_{n}}\int_{0}^{t_{n}}%
\lambda\left(  x_{0},\gamma_{0},\alpha_{n}\left(  s;x_{0},\gamma_{0}\right)
\right)  Q\left(  x_{0},\gamma_{0},\alpha_{n}\left(  s;x_{0},\gamma
_{0}\right)  \right)  ds=\zeta,\\
\underset{n\rightarrow\infty}{\lim}\frac{1}{t_{n}}\int_{0}^{t_{n}}%
l_{\gamma_{0}}\left(  x_{0},\alpha_{n}\left(  s;x_{0},\gamma_{0}\right)
\right)  ds=\eta.
\end{array}
\right.
\]
We fix, for the time being, $n\in%
\mathbb{N}
$. We let $\tau_{1}^{n}$ be the first jumping time associated to $\alpha
_{n}\left(  \cdot;x,\gamma\right)  $. Using the dynamic programming principle,
one gets
\begin{align*}
0  &  =v^{\delta}\left(  x_{0},\gamma_{0}\right)  -\varphi\left(
x_{0}\right)  \leq\mathbb{E}\left[
\begin{array}
[c]{c}%
\int_{0}^{t_{n}\wedge\tau_{1}^{n}}e^{-\delta s}l_{\gamma_{0}}\left(
y_{\gamma_{0}}\left(  s;x_{0},\alpha_{n}\right)  ,\alpha_{n}\left(
s;x_{0},\gamma_{0}\right)  \right)  ds\\
+e^{-\delta\left(  t_{n}\wedge\tau_{1}^{n}\right)  }v^{\delta}\left(
y_{\gamma_{0}}\left(  t_{n}\wedge\tau_{1}^{n};x_{0},\alpha_{n}\right)
,\Gamma_{t_{n}\wedge\tau_{1}^{n}}^{x_{0},\gamma_{0},\alpha_{n}}\right)
\end{array}
\right]  -\varphi\left(  x_{0}\right) \\
&  \leq\mathbb{E}\left[
\begin{array}
[c]{c}%
\int_{0}^{t_{n}\wedge\tau_{1}^{n}}e^{-\delta s}l_{\gamma_{0}}\left(
x_{0},\alpha_{n}\left(  s;x_{0},\gamma_{0}\right)  \right)  ds+\int_{0}%
^{t_{n}\wedge\tau_{1}^{n}}e^{-\delta s}Lip\left(  l\right)  \left\vert
f\right\vert _{0}sds\\
+e^{-\delta\tau_{1}^{n}}v^{\delta}\left(  y_{\gamma_{0}}\left(  \tau_{1}%
^{n};x_{0},\alpha_{n}\right)  ,\Gamma_{\tau_{1}^{n}}^{x_{0},\gamma_{0}%
,\alpha_{n}}\right)  \mathbf{1}_{\tau_{1}^{n}<t_{n}}+e^{-\delta t_{n}}%
\varphi\left(  y_{\gamma_{0}}\left(  t_{n};x_{0},\alpha_{n}\right)  \right)
\mathbf{1}_{\tau_{1}^{n}\geq t_{n}}%
\end{array}
\right] \\
&  -\varphi\left(  x_{0}\right) \\
&  \leq\left\vert f\right\vert _{0}Lip\left(  l\right)  t_{n}\mathbb{E}\left[
t_{n}\wedge\tau_{1}^{n}\right]  +\left\vert l\right\vert _{0}\left(  \int%
_{0}^{t_{n}}\left(  1-e^{-\delta s}\right)  ds+t_{n}\mathbb{P}\left(  \tau
_{1}^{n}<t_{n}\right)  \right)  +\delta\left\vert \varphi\right\vert _{0}%
t_{n}\mathbb{P}\left(  \tau_{1}^{n}<t_{n}\right) \\
&  \mathbb{E}\left[  \int_{0}^{t_{n}}l_{\gamma_{0}}\left(  x_{0},\alpha
_{n}\left(  s;x_{0},\gamma_{0}\right)  \right)  ds\right]  +e^{-\delta t_{n}%
}\varphi\left(  y_{\gamma}\left(  t_{n};x_{0},\alpha_{n}\right)  \right)
-\varphi\left(  x_{0}\right) \\
&  +\mathbb{E}\left[  e^{-\delta\tau_{1}^{n}}\left(  v^{\delta}\left(
y_{\gamma_{0}}\left(  \tau_{1}^{n};x_{0},\alpha_{n}\right)  ,\Gamma_{\tau
_{1}^{n}}^{x_{0},\gamma_{0},\alpha_{n}}\right)  -\varphi\left(  y_{\gamma_{0}%
}\left(  t_{n};x_{0},\alpha_{n}\right)  \right)  \right)  \mathbf{1}_{\tau
_{1}^{n}<t_{n}}\right]  .
\end{align*}
We set
\[
\lambda\left(  s\right)  :=\lambda\left(  y_{\gamma_{0}}\left(  s;x_{0}%
,\alpha_{n}\right)  ,\gamma_{0},\alpha_{n}\left(  s;x_{0},\gamma_{0}\right)
\right)  \text{ and }\Lambda\left(  s\right)  :=\exp\left(  -\int_{0}%
^{s}\lambda\left(  r\right)  dr\right)
\]
and one gets%

\begin{align}
0=  &  v^{\delta}\left(  x_{0},\gamma_{0}\right)  -\varphi\left(  x_{0}\right)
\nonumber\\
\leq &  \left\vert f\right\vert _{0}Lip\left(  l\right)  t_{n}\mathbb{E}%
\left[  t_{n}\wedge\tau_{1}^{n}\right]  +Lip\left(  \varphi\right)  \left\vert
f\right\vert _{0}t_{n}\mathbb{P}\left(  \tau_{1}^{n}<t_{n}\right)
+\delta\left\vert \varphi\right\vert _{0}t_{n}\mathbb{P}\left(  \tau_{1}%
^{n}<t_{n}\right) \nonumber\\
&  +\left\vert l\right\vert _{0}\left(  \int_{0}^{t_{n}}\left(  1-e^{-\delta
s}\right)  ds+t_{n}\mathbb{P}\left(  \tau_{1}^{n}<t_{n}\right)  \right)
\nonumber\\
&  +e^{-\delta t_{n}}\left(  \varphi\left(  y_{\gamma}\left(  t_{n}%
;x_{0},\alpha_{n}\right)  \right)  -\varphi\left(  x_{0}\right)  \right)
+\left(  e^{-\delta t_{n}}-1\right)  \varphi\left(  x_{0}\right)  +\int%
_{0}^{t_{n}}l_{\gamma_{0}}\left(  x_{0},\alpha_{n}\left(  s;x_{0},\gamma
_{0}\right)  \right)  ds\nonumber\\
&  +\int_{0}^{t_{n}}e^{-\delta s}\lambda\left(  s\right)  \Lambda\left(
s\right)  \left(
{\textstyle\sum\limits_{\gamma^{\prime}\neq\gamma_{0}}}
Q\left(  y_{\gamma_{0}}\left(  s;x_{0},\alpha_{n}\right)  ,\gamma_{0}%
,\gamma^{\prime},\alpha_{n}\left(  x_{0},\gamma_{0},s\right)  \right)  \left(
v^{\delta}\left(  y_{\gamma_{0}}\left(  s;x_{0},a\right)  ,\gamma^{\prime
}\right)  -\varphi\left(  x_{0}\right)  \right)  \right)  ds.
\label{ineq_subsol}%
\end{align}
The reader is invited to note that
\[
\left\{
\begin{array}
[c]{c}%
\left\vert e^{-\delta s}\lambda\left(  s\right)  \Lambda\left(  s\right)
-\lambda\left(  x_{0},\alpha_{n}\left(  s;x_{0},\gamma_{0}\right)  \right)
\right\vert \leq\left(  \left\vert f\right\vert _{0}Lip\left(  \lambda\right)
+\left\vert \lambda\right\vert _{0}\left(  \delta+\left\vert \lambda
\right\vert _{0}\right)  \right)  t_{n},\\
\left\vert Q\left(  y_{\gamma_{0}}\left(  s;x_{0},\alpha_{n}\right)
,\gamma_{0},\gamma^{\prime},a\right)  -Q\left(  x_{0},\gamma_{0}%
,\gamma^{\prime},a\right)  \right\vert \leq\left\vert f\right\vert
_{0}Lip\left(  Q\right)  t_{n},\\
\left\vert v^{\delta}\left(  y_{\gamma_{0}}\left(  s;x_{0},\alpha_{n}\right)
,\gamma^{\prime}\right)  -v^{\delta}\left(  x_{0},\gamma^{\prime}\right)
\right\vert =\omega^{\delta}\left(  \left\vert f\right\vert _{0}t_{n}\right)
,
\end{array}
\right.
\]
whenever $s\leq t_{n},$ where $\omega^{\delta}$ denotes the continuity modulus
of $v^{\delta}.$ Also,
\[
\frac{y_{\gamma}\left(  t_{n};x_{0},\alpha_{n}\right)  -x_{0}}{t_{n}}%
=\frac{\int_{0}^{t_{n}}f_{\gamma}\left(  y_{\gamma}\left(  s;x_{0},\alpha
_{n}\right)  ,\alpha_{n}\left(  s;x_{0},\gamma_{0}\right)  \right)  ds}{t_{n}%
}=\frac{\int_{0}^{t_{n}}f_{\gamma}\left(  x_{0},\alpha_{n}\left(
s;x_{0},\gamma_{0}\right)  \right)  ds}{t_{n}}+\omega\left(  t_{n}\right)  ,
\]
(where $\underset{\varepsilon\rightarrow0}{\lim}$ $\omega\left(
\varepsilon\right)  =0$). We divide (\ref{ineq_subsol}) by $t_{n}$ and allow
$n\rightarrow\infty$ to get%
\[
0\leq\eta-\delta\varphi\left(  x_{0}\right)  +D\varphi\left(  x_{0}%
;\xi\right)  +%
{\textstyle\sum\limits_{\gamma^{\prime}\neq\gamma}}
\zeta\left(  \gamma^{\prime}\right)  \left(  v^{\delta}\left(  x_{0}%
,\gamma^{\prime}\right)  -v^{\delta}\left(  x_{0},\gamma\right)  \right)  .
\]
The conclusion follows by recalling that $\left(  \xi,\zeta,\eta\right)
\in\overline{FL}\left(  x_{0},\gamma_{0}\right)  $ is arbitrary.

To prove that $v^{\delta}$ is a viscosity supersolution of the associated
Hamilton-Jacobi integrodifferential equation, let us fix, for the time being,
$\varepsilon>0.$ We equally fix $\left(  \gamma_{0},x_{0}\right)  \in
E\times\mathcal{G}$ and consider a test function $\varphi$ such that $x_{0}\in
Argmin\left(  v^{\delta}\left(  \cdot,\gamma_{0}\right)  -\varphi\left(
\cdot\right)  \right)  $. Then
\[
\varphi\left(  x_{0}\right)  -\varphi\left(  x\right)  \geq v^{\delta}\left(
x_{0},\gamma_{0}\right)  -v^{\delta}\left(  x,\gamma_{0}\right)  ,
\]
for all $x\in\overline{\mathcal{G}}$. We can assume, without loss of
generality, that $\varphi\left(  x_{0}\right)  =v^{\delta}\left(  x_{0}%
,\gamma_{0}\right)  $. There exists an admissible control $\alpha
^{\varepsilon}$ such that
\[
v^{\delta}\left(  x_{0},\gamma_{0}\right)  +\varepsilon\geq\mathbb{E}\left[
\begin{array}
[c]{c}%
\int_{0}^{\sqrt{\varepsilon}\wedge\tau_{1}}e^{-\delta s}l_{\gamma_{0}}\left(
y_{\gamma_{0}}\left(  s;x_{0},\alpha^{\varepsilon}\right)  ,\mathcal{\alpha
}^{\varepsilon}\left(  s\right)  \right)  ds\\
+e^{-\delta\left(  \sqrt{\varepsilon}\wedge\tau_{1}\right)  }v^{\delta}\left(
y_{\gamma_{0}}\left(  \sqrt{\varepsilon}\wedge\tau_{1};x_{0},\alpha
^{\varepsilon}\right)  ,\Gamma_{\sqrt{\varepsilon}\wedge\tau_{1}}%
^{x_{0},\gamma_{0},\alpha^{\varepsilon}}\right)
\end{array}
\right]  .
\]
(For notation purposes, we have dropped the dependency of $\gamma_{0},x_{0}$
in $\mathcal{\alpha}^{\varepsilon}$). As in the first part of our proof,
$\tau_{1}$ denotes the first jumping time associated to the admissible control
process $\alpha^{\varepsilon}$. Using similar estimates to the first part, one
gets%
\begin{align*}
0  &  =v^{\delta}\left(  x_{0},\gamma_{0}\right)  -\varphi\left(  x_{0}\right)
\\
&  \geq-\varepsilon-\left\vert f\right\vert _{0}Lip\left(  l\right)
\sqrt{\varepsilon}\mathbb{E}\left[  \sqrt{\varepsilon}\wedge\tau_{1}\right]
-Lip\left(  \varphi\right)  \left\vert f\right\vert _{0}\sqrt{\varepsilon
}\mathbb{P}\left(  \tau_{1}<\sqrt{\varepsilon}\right)  -\delta\left\vert
\varphi\right\vert _{0}\sqrt{\varepsilon}\mathbb{P}\left(  \tau_{1}%
<\sqrt{\varepsilon}\right) \\
&  -\left\vert l\right\vert _{0}\left(  \int_{0}^{\sqrt{\varepsilon}}\left(
1-e^{-\delta s}\right)  ds+\sqrt{\varepsilon}\mathbb{P}\left(  \tau_{1}%
<\sqrt{\varepsilon}\right)  \right) \\
&  +e^{-\delta\sqrt{\varepsilon}}\left(  \varphi\left(  y_{\gamma}\left(
\sqrt{\varepsilon};x_{0},\alpha^{\varepsilon}\right)  \right)  -\varphi\left(
x_{0}\right)  \right)  +\left(  e^{-\delta\sqrt{\varepsilon}}-1\right)
\varphi\left(  x_{0}\right) \\
&  +\int_{0}^{\sqrt{\varepsilon}}e^{-\delta s}\lambda\left(  s\right)
\Lambda\left(  s\right)  \left(
{\textstyle\sum\limits_{\gamma^{\prime}\neq\gamma_{0}}}
Q\left(  y_{\gamma_{0}}\left(  s;x_{0},\alpha^{\varepsilon}\right)
,\gamma_{0},\gamma^{\prime},\mathcal{\alpha}^{\varepsilon}\left(  s\right)
\right)  \left(  v^{\delta}\left(  y_{\gamma_{0}}\left(  s;x_{0}%
,\alpha^{\varepsilon}\right)  ,\gamma^{\prime}\right)  -\varphi\left(
x_{0}\right)  \right)  \right)  ds,
\end{align*}
where $\lambda\left(  s\right)  :=\lambda\left(  y_{\gamma_{0}}\left(
s;x_{0},\alpha^{\varepsilon}\right)  ,\gamma_{0},\mathcal{\alpha}%
^{\varepsilon}\left(  s\right)  \right)  $ and $\Lambda\left(  s\right)
:=\exp\left(  -\int_{0}^{s}\lambda\left(  r\right)  dr\right)  .$ We recall
that $f,$ $\lambda$ and $Q$ are Lipschitz-continuous and bounded and
$v^{\delta}$ is uniformly continuous and bounded. The conclusion follows
similarly to the subsolution case by dividing the inequality by $\sqrt
{\varepsilon},$ recalling the definition of $\overline{FL}\left(  x_{0}%
,\gamma_{0}\right)  $ and allowing $\varepsilon$ (or some subsequence) to go
to $0.$
\end{proof}

\section{Extending the intersection and linearizing the value
function\label{Section_Uniqueness}}

\subsection{Additional directions}

Without loss of generality, we assume that $-e_{j}\notin\overline{\mathcal{G}%
},$ for all $j\leq M\leq N$ and $-e_{j}\in\overline{\mathcal{G}},$ for all
$M<j\leq N.$ We define
\[
e_{j}:=-e_{j-N},E_{j}^{active}:=E_{j-N}^{active},E_{j}^{inactive}%
:=E_{j-N}^{inactive},
\]
whenever $N<j\leq M+N.$ For every $\varepsilon>0,$ we complete $\mathcal{G}$
into $\mathcal{G}^{+,\varepsilon}$ by adding $\left[  0,\varepsilon
e_{j}\right)  $ for $N<j\leq M+N$ and $\left(  1,1+\varepsilon\right)  e_{j}$,
for $j\leq N.$
\[%
{\parbox[b]{2.3194in}{\begin{center}
\includegraphics[
height=2.1473in,
width=2.3194in
]%
{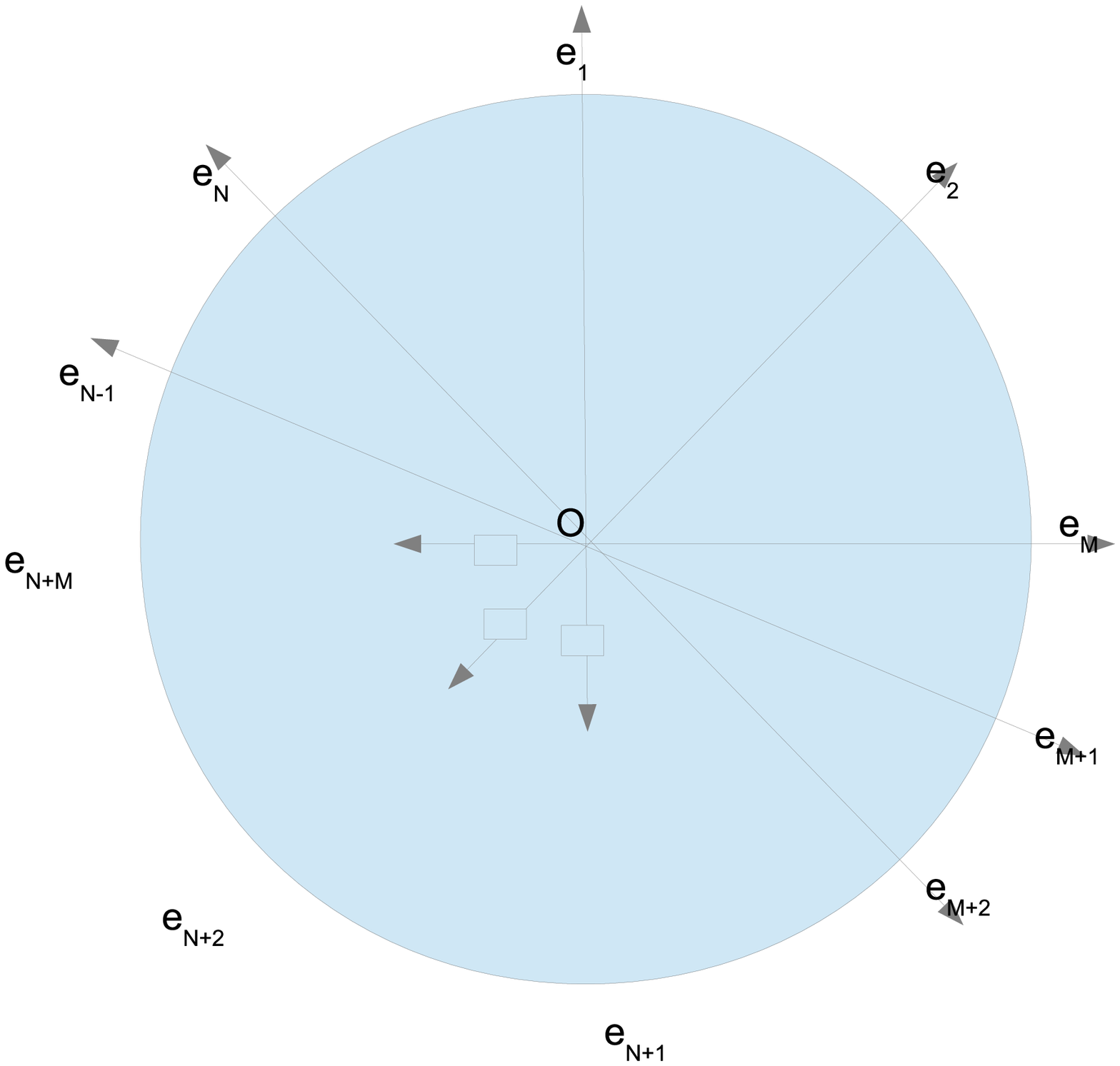}%
\\
Fig 2. The complete intersection
\end{center}}}%
\]

Throughout the remaining of the paper we make the following assumption.

\textbf{(B)} Whenever $M<j,j^{\prime}\leq N$ are such that $e_{j^{\prime}%
}=-e_{j},$ then $A^{\gamma,j}=A^{\gamma,j^{\prime}},$ for all $\gamma\in E.$

\begin{remark}
Roughly speaking, on the roads that cross the intersection (of type $\left(
-1,1\right)  e_{j}$)$,$ the same family of (piecewise constant) controls can
be used both at the entrance and at the exit of the intersection.
\end{remark}

\subsubsection{Inactive roads}

The reader is invited to notice that, if $e_{j},e_{j^{\prime}}=-e_{j}%
\in\overline{\mathcal{G}}$ then, for every $\gamma\in E_{j}^{inactive}\cap
E_{j^{\prime}}^{inactive},$ $f_{\gamma}\left(  O,a\right)  =0,$ for all $a\in
A^{\gamma,j}\cap A^{\gamma,j^{\prime}}.$ This is a simple consequence of the
assumption (\textbf{Ab}) which implies that $\left\langle f_{\gamma}\left(
O,a\right)  ,e_{j}\right\rangle \leq0$ and $\left\langle f_{\gamma}\left(
O,a\right)  ,e_{j^{\prime}}\right\rangle \leq0,$ for all $a\in A^{\gamma
,j}\cap A^{\gamma,j^{\prime}}$. In particular, if (\textbf{B}) holds true,
then $f_{\gamma}\left(  O,a\right)  =0,$ for all $a\in A^{\gamma,j}%
(=A^{\gamma,j^{\prime}})$ whenever $\gamma\in E_{j}^{inactive}\cap
E_{j^{\prime}}^{inactive}.$

Hence, in order to obtain a similar behavior for the completed intersection,
it is natural to strengthen the assumption (\textbf{Ab}). We will assume that,

\textbf{(Ab')} Whenever $\gamma\in E_{j}^{inactive}$ for some $j\leq M,$ then
$f_{\gamma}\left(  O,a\right)  =0,$ for all $a\in A^{\gamma,j}.$

\begin{remark}
This is, of course, less general than the existence of one $a_{\gamma,j}%
^{0}\in A^{\gamma,j}$ guaranteed by (\textbf{Ab}). The assumption states that,
whenever the road $j$ is inactive, a vehicle that needs to go on this road
should wait until it is repaired.
\end{remark}

\subsection{Extending the dynamics}

Unless stated otherwise, we assume the (pseudo-)controllability conditions
(\textbf{Aa, Ab, Ad}), the compatibility at the intersection\textbf{ (Ab',
Ac'), }the regularity of the coefficients and cost functions \textbf{(A1-A4)}
and the compatibility condition (\textbf{B}) to hold true.

We are now able to extend $f$ (and $\lambda,Q)$ $\ $to $\left(
{\textstyle\bigcup\limits_{j=1,2,...,N}}
\mathbb{R}
e_{j}\right)  \times A$ by setting%
\[
f_{\gamma}\left(  x,a\right)  =\left\{
\begin{array}
[c]{l}%
f_{\gamma}\left(  x,a\right)  ,\text{ \ \ \ if }x\in\left(  0,1\right)
e_{j},\text{ }j\leq N,\\
f_{\gamma}\left(  e_{j},a\right)  ,\text{ \ \ \ if }x\in\left[  1,\infty
\right)  e_{j},\text{ }j\leq N,\text{ }\\
-f_{\gamma}\left(  -x,a\right)  ,\text{ if }\gamma\in E_{j}^{inactive},x\in%
\mathbb{R}
_{-}e_{j},\text{ }j\leq M,\\
f_{\gamma}\left(  O,a\right)  ,\text{ \ \ \ if }\gamma\in E_{j}^{active},x\in%
\mathbb{R}
_{-}e_{j},\text{ }j\leq M.
\end{array}
\right.
\]
For the other elements $\left(  \varphi\in\left\{  \lambda,l,Q\right\}
\right)  $, we set
\[
\varphi\left(  x,\gamma,a\right)  =\left\{
\begin{array}
[c]{l}%
\varphi\left(  x,\gamma,a\right)  ,\text{ \ \ \ if }x\in\left(  0,1\right)
e_{j},\text{ }j\leq N,\\
\varphi\left(  e_{j},\gamma,a\right)  ,\text{ \ \ \ if }x\in\left[
1,\infty\right)  e_{j},\text{ }j\leq N,\text{ }\\
\varphi\left(  O,\gamma,a\right)  ,\text{ \ \ otherwise.}%
\end{array}
\right.  .
\]
(by abuse of notation, $l\left(  x,\gamma,a\right)  =l_{\gamma}\left(
x,a\right)  $).

This particular construction for $f$ is needed in order to guarantee that the
assumptions (\textbf{Aa}) and (\textbf{Ab}) hold true for the new system on
$\mathcal{G}^{+,\varepsilon}.$ It basically suggests that in the active case,
the vehicle will continue its road on the extension of the road with the same
speed as in $O$. In the inactive case, the extension of the road is obtained
by looking at the road $j$ using a mirror.

\subsection{Krylov's "shaking the coefficients" method}

We wish to construct a family of regular functions satisfying a suitable
subsolution condition and converging, as $\varepsilon\rightarrow0$ to our
value function. Regularization can be achieved by classical convolution.
However, because of the convolution, the subsolution condition should not only
concern a point $x$ but some neighborhood. This is the reason why, one needs
to introduce a perturbation in the Hamiltonian or, equivalently, in the
coefficients. The method is known as "shaking the coefficients" and has been
introduced, in the framework of Brownian diffusions, in \cite{krylov_00}.

For $r>0,$ we let $B_{r}$ denote the $r$-radius closed ball $B_{r}=\left\{
y\in%
\mathbb{R}
^{m}:\left\vert y\right\vert \leq r\right\}  .$ We set
\[
\left\{
\begin{array}
[c]{c}%
f_{\gamma}^{\rho}\left(  x,a,b\right)  =f_{\gamma}\left(  x+\rho b,a\right)
,\\
\varphi^{\rho}\left(  x,\gamma,a,b\right)  =\varphi\left(  x+\rho
b,\gamma,a\right)  ,\text{ if }\varphi\in\left\{  \lambda,Q,l\right\}  ,
\end{array}
\right.
\]
for all $\left(  x,a,b\right)  \in\underset{j=1,2,...,N}{\cup}\left(  \left[
-\varepsilon,1+\varepsilon\right]  e_{j}\times A\times\left[  -1,1\right]
e_{j}\right)  ,$ and all $\left\vert b\right\vert \leq1.$ Let us fix, for the
time being, $\varepsilon\geq\rho>0$ and consider the control problem on
$\mathcal{G}^{+,\varepsilon}$. We denote by
\[
J_{j}^{\varepsilon,+}:=\left(  0,1+\varepsilon\right)  e_{j},\text{ for all
}j=1,2,...,N,\text{ }J_{j}^{\varepsilon,-}:=\left\{
\begin{array}
[c]{c}%
\left(  -\varepsilon,0\right)  e_{j},\text{ for }j\leq M,\\
\left(  -1-\varepsilon,0\right)  e_{j},\text{ for }M<j\leq N,
\end{array}
\right.  ,J_{j}^{\varepsilon}:=J_{j}^{\varepsilon,+}\cup J_{j}^{\varepsilon
,-}.
\]
for all $j=1,2,...,N.$

We set
\[
\overline{A}:=\underset{j=1,2,...,N}{\cup}\left(  A^{\gamma,j}\times\left[
-1,1\right]  e_{j}\right)  ,\text{ }\overline{A}^{\gamma,j}:=A^{\gamma
,j}\times\left[  -1,1\right]  e_{j}.
\]
For this extended system, we will check that our controllability assumptions
(\textbf{Aa}) and (\textbf{Ab}) hold true for explicit sets of controls.

The reader is invited to notice that the following hold true :%

\begin{equation}
\left.  \overline{A}_{\gamma,x}=\overline{A}^{\gamma,j},\text{ if }x\in
J_{j}^{\varepsilon,+},\text{ }\overline{A}_{\gamma,O}%
=\underset{j=1,2,...,N}{\cup}\overline{A}^{\gamma,j},\text{ }\overline
{A}_{\gamma,\left(  1+\varepsilon\right)  e_{j}}=A_{\gamma,e_{j}}\times\left[
-1,1\right]  e_{j}\text{,}\right.  \tag{$\overline{Aa}$}%
\end{equation}
for all $j=1,2,...,N.$ Let us fix $j\leq M.$

(i) If $\gamma\in E_{j}^{active}$, then%

\[
\overline{A}_{\gamma,-\varepsilon e_{j}}=\left\{  \left(  a,b\right)
\in\overline{A}^{\gamma,j}:f_{\gamma}\left(  O,a\right)  \in%
\mathbb{R}
_{+}e_{j}\right\}  .
\]
The set $\overline{A}_{\gamma,-\varepsilon e_{j}}$ is nonempty. Indeed, the
control $\left(  a_{\gamma,j}^{+},b\right)  $ ($a_{\gamma,j}^{+}$ given by the
assumption (\textbf{Ab}) and $b\in\left[  -1,1\right]  e_{j}$ arbitrary)
belongs to $\overline{A}_{\gamma,-\varepsilon e_{j}}$ and%
\[
\left\langle f_{\gamma}^{\rho}\left(  -\varepsilon e_{j},a_{\gamma,j}%
^{+},b\right)  ,\left(  -e_{j}\right)  \right\rangle =\left\langle f_{\gamma
}\left(  O,a_{\gamma,j}^{+}\right)  ,\left(  -e_{j}\right)  \right\rangle
<-\beta,
\]
for all $b\in\left[  -1,1\right]  e_{j}.$

(ii) If $\gamma\in E_{j}^{inactive}$, then
\[
\overline{A}_{\gamma,-\varepsilon e_{j}}=\overline{A}^{\gamma,j}.
\]
Indeed,
\[
\left\langle f_{\gamma}^{\rho}\left(  -\varepsilon e_{j},a,b\right)  ,\left(
-e_{j}\right)  \right\rangle =\left\langle -f_{\gamma}\left(  \varepsilon
e_{j}-\rho b,a\right)  ,\left(  -e_{j}\right)  \right\rangle =\left\langle
f_{\gamma}\left(  \varepsilon e_{j}-\rho b,a\right)  ,e_{j}\right\rangle
\leq0,
\]
for $\varepsilon$ small enough and all $\left(  a,b\right)  \in\overline
{A}^{\gamma,j}.$

Thus, (\textbf{Aa}) holds true for the system driven by $\left(  f^{\rho
},\lambda^{\rho},Q^{\rho}\right)  .$

Concerning the assumption (\textbf{Ab}), for the already existing branches, it
suffices to take $b=0$ and the controls $a_{\gamma,j}^{+},a_{\gamma,j}%
^{-},a_{\gamma,j}^{0}.$ Let us now fix $j\leq M.$

(i) If $\gamma\in E_{j}^{active}$, then $\gamma\in E_{j+N}^{active}$, by
construction. We recall that $e_{j+N}=-e_{j}$. Moreover we have%
\[
\left\langle f_{\gamma}^{\rho}\left(  O,\left(  a_{\gamma,j}^{+},0\right)
\right)  ,-e_{j}\right\rangle <-\beta\text{ and }\left\langle f_{\gamma}%
^{\rho}\left(  O,\left(  a_{\gamma,j}^{-},0\right)  \right)  ,e_{j}%
\right\rangle >\beta.
\]

(ii) For $\gamma\in E_{j}^{inactive}=E_{j+N}^{active},$
\[
\left\langle f_{\gamma}^{\rho}\left(  x,\left(  a_{\gamma,j}^{-},0\right)
\right)  ,-e_{j}\right\rangle =\left\langle -f_{\gamma}\left(  -x,a_{\gamma
,j}^{-}\right)  ,-e_{j}\right\rangle \leq-\beta\left\langle -x,e_{j}%
\right\rangle ^{\kappa},
\]
for all $x\in\left[  -\varepsilon,0\right]  e_{j}$ and $f_{\gamma}^{\rho
}\left(  O,\left(  a_{\gamma,j}^{0},0\right)  \right)  =0.$

We cannot have
\[
\left\langle f_{\gamma}^{\rho}\left(  x,\left(  a,b\right)  \right)
,-e_{j}\right\rangle \leq0,
\]
for all $\left(  a,b\right)  \in\overline{A}^{\gamma,j}$ and all $x\in J_{j},$
$\left\vert x\right\vert \leq\eta$ (close enough to $O$). Nevertheless, as we
have already hinted before (see Remark \ref{Uniform_remark_one} (ii)), this
condition and the one in (\textbf{Ac}) are no longer necessary since every
control is (locally) admissible at $O$. Thus, the conclusion of Lemma
\ref{Projection_Lemma} holds true and so do all the assertions on the value
functions in this framework.

At this point, we consider the process $\left(  X_{t}^{\rho,x_{0},\gamma
_{0},\overline{\alpha}},\Gamma_{t}^{\rho,x_{0},\gamma_{0},\overline{\alpha}%
}\right)  $ constructed as in Section \ref{constr+ass} using $\left(  f^{\rho
},\lambda^{\rho},Q^{\rho}\right)  $ and controls $\overline{\alpha}$ with
values in $\overline{A}.$ We also let $y^{\rho}$ denote the solution of the
ordinary differential equation driven by $f^{\rho}.$

Then, the value functions
\begin{align*}
&  v^{\delta,\varepsilon,\rho}\left(  x,\gamma\right) \\
&  :=\inf_{\overline{\alpha}\in\overline{\mathcal{A}}_{ad}^{%
\mathbb{N}
}}\mathbb{E}\left[
{\textstyle\sum\limits_{n\geq0}}
\int_{\tau_{n}}^{\tau_{n+1}}e^{-\delta t}l_{\Gamma_{\tau_{n}}^{\rho
,x,\gamma,\overline{\alpha}}}^{\rho}\left(  y_{\Gamma_{\tau_{n}}^{\rho
,\gamma,x,\overline{\alpha}}}^{\rho}\left(  t;X_{\tau_{n}}^{\rho
,x,\gamma,\overline{\alpha}},\overline{\alpha}_{n+1}\right)  ,\overline
{\alpha}_{n+1}\left(  t-\Gamma_{\tau_{n}}^{\rho,x,\gamma,\overline{\alpha}%
};X_{\tau_{n}}^{\rho,x,\gamma,\overline{\alpha}},\Gamma_{\tau_{n}}%
^{\rho,x,\gamma,\overline{\alpha}}\right)  \right)  \right]
\end{align*}
are bounded, uniformly continuous and satisfy, in the generalized sense given
by Definition \ref{Def_generalized_solution} and Theorem
\ref{Th_existence_solution} the Hamilton-Jacobi integrodifferential system%

\begin{equation}
\delta v^{\delta,\varepsilon,\rho}\left(  x,\gamma\right)  +\sup_{\left(
a,b\right)  \in\overline{A}_{\gamma,x}}\left\{
\begin{array}
[c]{c}%
-\left\langle f_{\gamma}\left(  x+\rho b,a\right)  ,Dv^{\delta,\varepsilon
,\rho}\left(  x,\gamma\right)  \right\rangle -l_{\gamma}\left(  x+\rho
b,a\right) \\
-\lambda\left(  x+\rho b,\gamma,a\right)
{\textstyle\sum\limits_{\gamma^{\prime}\in E}}
Q\left(  x+\rho b,\gamma,\gamma^{\prime},a\right)  \left(  v^{\delta
,\varepsilon,\rho}\left(  x,\gamma^{\prime}\right)  -v^{\delta,\varepsilon
,\rho}\left(  x,\gamma\right)  \right)
\end{array}
\right\}  \leq0, \label{HJeps}%
\end{equation}
for all $\left(  x,\gamma\right)  \in\mathcal{G}^{+,\varepsilon}\times E.$

\subsection{Another definition for solutions in the extended intersection}

We define
\[
\overline{co}\overline{fl}^{\rho}\left(  O,\gamma\right)
:=\underset{j=1,2,...,N}{\cup}\overline{co}\left\{  \overline{fl}^{\rho
}\left(  O,\gamma,\left(  a,b\right)  \right)  :\left(  a,b\right)
\in\overline{A}^{\gamma,j}\right\}
\]
and recall that
\[
\overline{FL}^{\rho}(x,\gamma)=\overline{co}\overline{fl}^{\rho}\left(
x,\gamma\right)  \left(  :=\overline{co}\left\{  \overline{fl}^{\rho}\left(
x,\gamma,a\right)  :a\in A_{\gamma,x}\right\}  \right)  ,
\]
for all $x\in\mathcal{G}^{+,\varepsilon}\smallsetminus\left\{  O\right\}  $
and, for every $j\leq N,$%
\begin{align*}
\overline{FL}^{\rho}(\left(  1+\varepsilon\right)  e_{j},\gamma)  &
\subset\overline{co}\overline{fl}^{\rho}\left(  \left(  1+\varepsilon\right)
e_{j},\gamma\right) \\
&  \left(  :=\overline{co}\left\{  \overline{fl}^{\rho}\left(  \left(
1+\varepsilon\right)  e_{j},\gamma,a\right)  :a\in A^{\gamma,e_{j}}\right\}
\cap\left(  \mathcal{%
\mathbb{R}
}_{-}e_{j}\times\mathcal{M}_{+}\left(  E\right)  \times%
\mathbb{R}
\right)  \right)  .
\end{align*}
Also, for every $j\leq M,$%
\begin{align*}
\overline{FL}^{\rho}(-\varepsilon e_{j},\gamma)  &  \subset\overline
{co}\overline{fl}^{\rho}\left(  -\varepsilon e_{j},\gamma\right) \\
&  \left(  :=\overline{co}\left\{  \overline{fl}^{\rho}\left(  -\varepsilon
e_{j},\gamma,a\right)  :a\in A^{\gamma,e_{j}}\right\}  \cap\left(  \mathcal{%
\mathbb{R}
}_{+}e_{j}\times\mathcal{M}_{+}\left(  E\right)  \times%
\mathbb{R}
\right)  \right)
\end{align*}

We consider another definition for viscosity subsolutions by taking more
regular test functions.

\begin{definition}
\label{Def_more_regular_test}A bounded, upper (resp. lower) semicontinuous
function $V$ is said to be a classical constrained viscosity subsolution (resp
subsolution of (\ref{HJeps}) if, for every $\left(  \gamma_{0},x_{0}\right)
\in E\times\mathcal{G}^{+,\varepsilon}$ (resp. $E\times\overline{\mathcal{G}%
}^{+,\varepsilon}$), whenever $\varphi\in C_{b}\left(  \overline{\mathcal{G}%
}^{+,\varepsilon}\right)  $ for which $\varphi\mid_{\overline{J_{j}%
^{\varepsilon}}}\in C_{b}^{1}\left(  \overline{J_{j}^{\varepsilon}}\right)  ,$
for all $j=1,2,...,N$ is a test function such that $x_{0}\in Argmax\left(
V\left(  \cdot,\gamma_{0}\right)  -\varphi\left(  \cdot\right)  \right)  ,$
one has
\[
\delta V\left(  x_{0},\gamma_{0}\right)  +\sup_{\left(  \xi,\zeta,\eta\right)
\in\overline{co}\overline{fl}^{\rho}\left(  x_{0},\gamma_{0}\right)  }\left\{
\begin{array}
[c]{c}%
-\left\langle D\varphi\left(  x_{0};\xi\right)  \right\rangle -\eta\\
-%
{\textstyle\sum\limits_{\gamma^{\prime}\in E}}
\zeta\left(  \gamma^{\prime}\right)  \left(  V\left(  x_{0},\gamma^{\prime
}\right)  -V\left(  x_{0},\gamma_{0}\right)  \right)
\end{array}
\right\}  \leq0,
\]
$(resp.$ $\geq0).$
\end{definition}

We get the following characterization of $v^{\delta,\varepsilon,\rho}.$

\begin{theorem}
\label{ThSolTwo}The bounded uniformly continuous function $v^{\delta
,\varepsilon,\rho}$ is a classical constrained viscosity subsolution of
(\ref{HJeps}). Moreover, it satisfies the supersolution condition on
$E\times\left(  \overline{\mathcal{G}}^{+,\varepsilon}\smallsetminus\left\{
O\right\}  \right)  .$
\end{theorem}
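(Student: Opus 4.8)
The plan is to exploit the fact, recorded immediately before the statement, that $v^{\delta,\varepsilon,\rho}$ is already a bounded uniformly continuous generalized solution of (\ref{HJeps}) on the extended network, in the sense of Definition \ref{Def_generalized_solution} (the exact analogue of Theorem \ref{Th_existence_solution} for the system driven by $\left(f^{\rho},\lambda^{\rho},Q^{\rho}\right)$). Every test function admissible in Definition \ref{Def_more_regular_test} is in particular admissible in Definition \ref{Def_generalized_solution}, so the generalized inequalities apply to it; the whole statement then reduces to comparing, point by point, the set $\overline{FL}^{\rho}\left(x_{0},\gamma_{0}\right)$ of the generalized definition with the set $\overline{co}\overline{fl}^{\rho}\left(x_{0},\gamma_{0}\right)$ of Definition \ref{Def_more_regular_test}, and transferring the inequalities through the elementary monotonicity $\sup_{S}\left(\cdot\right)\geq\sup_{T}\left(\cdot\right)$ valid whenever $T\subseteq S$.

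For the subsolution property I would distinguish the interior from the junction. At every $x_{0}\in\mathcal{G}^{+,\varepsilon}\smallsetminus\left\{O\right\}$ the extended form of Lemma \ref{Lemma_interior} gives the equality $\overline{FL}^{\rho}\left(x_{0},\gamma_{0}\right)=\overline{co}\overline{fl}^{\rho}\left(x_{0},\gamma_{0}\right)$, so the classical constrained inequality is literally the generalized one. The only genuinely new point is $x_{0}=O$, where I would establish the inclusion $\overline{co}\overline{fl}^{\rho}\left(O,\gamma_{0}\right)\subseteq\overline{FL}^{\rho}\left(O,\gamma_{0}\right)$. Fixing a branch $j$ and a control $\left(a,b\right)\in\overline{A}^{\gamma_{0},j}$, the shaken velocity $f_{\gamma_{0}}^{\rho}\left(O,\left(a,b\right)\right)=f_{\gamma_{0}}\left(\rho b,a\right)$ lies in $\mathbb{R}e_{j}$ (this is exactly what the inertial extension in the active case and the mirror extension in the inactive case were designed to ensure), so the constant control $\left(a,b\right)$ produces a trajectory that stays on the extended branch $j$ for short times and is therefore locally admissible at $O$. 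Hence its time-averages converge to $\overline{fl}^{\rho}\left(O,\gamma_{0},\left(a,b\right)\right)$, placing this triple in $\overline{FL}^{\rho}\left(O,\gamma_{0}\right)$; the bang-bang time-sharing construction of Lemma \ref{Lemma_interior} then adds the convex hull over each branch and the union over $j$. Running the dynamic-programming computation of the proof of Theorem \ref{Th_existence_solution} at $x_{0}=O$ along these realizing controls (a step unavailable in the original graph, but now legitimate since the completion makes $O$ interior-like on every branch) yields $\sup_{\overline{co}\overline{fl}^{\rho}\left(O,\gamma_{0}\right)}\left\{\cdots\right\}\leq0$.

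For the supersolution property on $E\times\left(\overline{\mathcal{G}}^{+,\varepsilon}\smallsetminus\left\{O\right\}\right)$ the comparison runs in the opposite direction. At interior points $x_{0}\in J_{j}^{\varepsilon}$ the equality of the two sets again identifies the two conditions. At the terminal vertices $\left(1+\varepsilon\right)e_{j}$ and $-\varepsilon e_{j}$ only the inclusion $\overline{FL}^{\rho}\left(x_{0},\gamma_{0}\right)\subseteq\overline{co}\overline{fl}^{\rho}\left(x_{0},\gamma_{0}\right)$ holds (the extended analogue of the second part of Lemma \ref{Lemma_interior}); since the supremum is now taken over the larger set, $\sup_{\overline{co}\overline{fl}^{\rho}}\left\{\cdots\right\}\geq\sup_{\overline{FL}^{\rho}}\left\{\cdots\right\}\geq0$. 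This also explains transparently why $O$ must be excluded from the supersolution statement: there the inclusion is reversed, so the supremum over the smaller set $\overline{co}\overline{fl}^{\rho}\left(O,\gamma_{0}\right)$ need not remain nonnegative.

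The step I expect to be the real obstacle is the inclusion $\overline{co}\overline{fl}^{\rho}\left(O,\gamma_{0}\right)\subseteq\overline{FL}^{\rho}\left(O,\gamma_{0}\right)$ at the junction, since it is the only place where one must genuinely construct admissible trajectories rather than quote a previously established set identity. The delicate verification is that every branch control $\left(a,b\right)\in\overline{A}^{\gamma_{0},j}$ is locally admissible at $O$ once the graph has been completed --- precisely the role of the extensions $f_{\gamma}\left(x,a\right)=-f_{\gamma}\left(-x,a\right)$ on inactive roads and $f_{\gamma}\left(x,a\right)=f_{\gamma}\left(O,a\right)$ on active roads --- and that the $\lambda Q$- and $l$-components pass to the limit along with the velocity, after which convexification and the union over $j$ proceed exactly as in Lemma \ref{Lemma_interior}.
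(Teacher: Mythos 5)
Your proposal is correct and follows essentially the same route as the paper: away from $O$ the equality $\overline{FL}^{\rho}=\overline{co}\,\overline{fl}^{\rho}$ transfers the generalized conditions to the more regular test functions, at the extremities $(1+\varepsilon)e_{j}$ and $-\varepsilon e_{j}$ the inclusion $\overline{FL}^{\rho}\subset\overline{co}\,\overline{fl}^{\rho}$ gives the supersolution inequality, and at $O$ the local admissibility of every constant control $(a,b)\in\overline{A}^{\gamma,j}$ on the completed graph lets one rerun the dynamic-programming computation of Theorem \ref{Th_existence_solution} and conclude by convexity and the union over branches. The only (immaterial) difference is that you package the junction step as the inclusion $\overline{co}\,\overline{fl}^{\rho}(O,\gamma_{0})\subseteq\overline{FL}^{\rho}(O,\gamma_{0})$ followed by the generalized subsolution property, whereas the paper writes the pointwise inequality for each constant control and then convexifies directly.
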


\begin{proof}
The reader is invited to note that the test functions in this case are more
regular than in Definition \ref{Def_generalized_solution}. Thus, the equality
$\overline{FL}^{\rho}(x,\gamma)=\overline{co}\overline{fl}^{\rho}\left(
x,\gamma\right)  $ implies the viscosity sub/super condition at every point
$x\in\mathcal{G}^{+,\varepsilon}\smallsetminus\left\{  O\right\}  .$ The
supersolution condition at $\left(  1+\varepsilon\right)  e_{j}$ (resp.
$-\varepsilon e_{j}$) follows from the inclusion $\overline{FL}^{\rho}(\left(
1+\varepsilon\right)  e_{j},\gamma)\subset\overline{co}\overline{fl}^{\rho
}\left(  \left(  1+\varepsilon\right)  e_{j},\gamma\right)  $ (resp.
$\overline{FL}^{\rho}(-\varepsilon e_{j},\gamma)\subset\overline{co}%
\overline{fl}^{\rho}\left(  -\varepsilon e_{j},\gamma\right)  $)$.$

The constant control $\left(  a,b\right)  \in\overline{A}^{\gamma,1}$ is
locally admissible at $O$ (on the extended graph $\overline{\mathcal{G}%
}^{+,\varepsilon}$)$.$ Hence, reasoning as in the subsolution part of theorem
\ref{Th_existence_solution} (for constant $\overline{\alpha_{n}}=\left(
a,b\right)  $), one proves that if $\varphi$ is a regular test function such
that $O\in Argmax\left(  v^{\delta}\left(  \gamma,\cdot\right)  -\varphi
\left(  \cdot\right)  \right)  $, then
\begin{align*}
0  &  \leq l_{\gamma}^{\rho}\left(  O,\left(  a,b\right)  \right)
-\delta\varphi\left(  O\right)  +\left\langle D\left(  \varphi\mid
_{\overline{J_{1}^{\varepsilon}}}\right)  \left(  O\right)  ,f_{\gamma}\left(
x,\left(  a,b\right)  \right)  \right\rangle +\\
&  \lambda^{\rho}\left(  O,\gamma,\left(  a,b\right)  \right)
{\textstyle\sum\limits_{\gamma^{\prime}\neq\gamma}}
Q^{\rho}\left(  O,\gamma,\gamma^{\prime},\left(  a,b\right)  \right)  \left(
v^{\delta,\varepsilon,\rho}\left(  O,\gamma^{\prime}\right)  -v^{\delta
,\varepsilon}\left(  O,\gamma\right)  \right)  .
\end{align*}
Thus, continuity and convexity arguments imply that
\[
\delta\varphi\left(  O\right)  +\sup_{\left(  \xi,\zeta,\eta\right)
\in\overline{co}\left\{  \overline{fl}^{\rho}\left(  O,\gamma,\left(
a,b\right)  \right)  :\left(  a,b\right)  \in\overline{A}^{\gamma,1}\right\}
}\left\{
\begin{array}
[c]{c}%
-\left\langle D\varphi\left(  O;\xi\right)  \right\rangle -\eta\\
-%
{\textstyle\sum\limits_{\gamma^{\prime}\in E}}
\zeta\left(  \gamma^{\prime}\right)  \left(  v^{\delta,\varepsilon,\rho
}\left(  O,\gamma^{\prime}\right)  -v^{\delta,\varepsilon,\rho}\left(
O,\gamma\right)  \right)
\end{array}
\right\}  \leq0
\]
and the subsolution condition follows.
\end{proof}

\begin{remark}
In order to have (classical) uniqueness, one has to impose further conditions
at the junction $O.$ For example, in the case when $l_{\gamma}\left(
O,a\right)  $ does not depend on $a$ for all $\gamma\in\underset{j\leq N}{%
{\textstyle\bigcup}
}E_{j}^{active},$ one reasons in the same way as in Section 5.2 of
\cite{AchdouCamilliCutri2013}. The arguments are quasi-identical and we prefer
to concentrate on a different approach to uniqueness. Alternatively, one can
impose the analog of the Assumption 2.3 in \cite{AchdouCamilliCutri2013},
i.e.
\[
\left(  \left\{  0\right\}  \times\mathcal{M}_{+}\left(  E\right)
\times\left\{  \inf_{a\in A}l_{\gamma}\left(  O,a\right)  \right\}  \right)
\cap\overline{co}\left\{  \overline{fl}^{0}\left(  O,\gamma,\left(
a,b\right)  \right)  :\left(  a,b\right)  \in\overline{A}^{\gamma,j}\right\}
\neq\emptyset,
\]
for all $j$ such that $\gamma\in E_{j}^{active}.$
\end{remark}

\subsection{Convergence to the initial value function}

Unless stated otherwise, we assume the controllability conditions (\textbf{Aa,
Ab, Ad}), the compatibility at the intersection\textbf{ (Ab', Ac'), }the
regularity of the coefficients and cost functions \textbf{(A1-A4)} and the
compatibility condition (\textbf{B}) to hold true.

\textbf{(C)} Throughout the subsection, we also assume that $l$ does not
depend on the control at $O$ and the nodes $e_{j}$.

This "projection long-run compatibility condition" will allow to change the
control process around the "critical" points in order to obtain, from
admissible controls on $\overline{\mathcal{G}}^{+,\varepsilon}$ an admissible
control keeping the trajectory in $\overline{\mathcal{G}}.$ This assumption
\textbf{(C)} is only needed to prove Lemma \ref{Projection_Lemma_eps} in its
full generality. We have chosen to give a deeper result in Lemma
\ref{Projection_Lemma_eps} for further developments on the subject.

Let us fix $\varepsilon>0$ small enough. We introduce the following
notations:
\begin{align*}
t_{\varepsilon}  &  :=-\frac{1}{\delta}\ln\left(  \frac{\varepsilon\delta
}{2\left\vert f\right\vert _{0}}\right)  ,\text{ }\rho_{\varepsilon}%
:=-\frac{\varepsilon^{1+\frac{2Lip(f)}{(1-\kappa)\delta}}}{\ln(\varepsilon
)},r_{\varepsilon}^{\prime}\leq\frac{\rho_{\varepsilon}}{2},\text{ }\\
\omega_{\varepsilon}(t;r)  &  :=e^{Lip\left(  f\right)  t}\left(
r+(2\rho_{\varepsilon}\vee4r_{\varepsilon}^{\prime})Lip\left(  f\right)
t\right)  ,\text{ }t\geq0,r\geq0,\text{ }\Phi(\varepsilon):=\left(
\frac{|f|_{0}}{(1-\kappa)\beta}+1\right)  \left(  \omega_{\varepsilon}\left(
t_{\varepsilon};r_{\varepsilon}^{\prime}\right)  \right)  ^{1-\kappa}.\text{ }%
\end{align*}
The reader is invited to note that
\[
\omega_{\varepsilon}(t;\omega_{\varepsilon}(t^{\ast};r))\leq\omega
_{\varepsilon}(t^{\ast}+t;r),
\]
for all $t,t^{\ast},r\geq0.$ To get the best approximation and simplify the
proof of Lemma \ref{Projection_Lemma_eps}, we also strengthen (\textbf{A1)
}and ask that the restriction of $f_{\gamma}$ to $\left[  0,1\right]  e_{j}$
be Lipschitz-continuous for $\gamma\in E_{j}^{active}$. We emphasize that this
only affects the definition of $\rho_{\varepsilon}$ in Lemma
\ref{Projection_Lemma_eps} but not Theorem \ref{ThConvergence}.

With these notations, we establish.

\begin{lemma}
\label{Projection_Lemma_eps}Whenever $\gamma\in E$, $x\in J_{1}^{\varepsilon}$
and $\overline{\alpha}=\left(  \alpha,\beta\right)  \in\overline{\mathcal{A}%
}_{\gamma,x}$, there exists $\mathcal{P}_{x}^{\varepsilon}\left(
\alpha\right)  $ (also depending on $\gamma$) such that $\left(
\mathcal{P}_{x}^{\varepsilon}\left(  \alpha\right)  ,0\right)  \in
\overline{\mathcal{A}}_{\gamma,x}$ such that
\begin{equation}
\left\vert y_{\gamma}^{\rho_{\varepsilon}}\left(  t;x,\left(  \mathcal{P}%
_{x}\left(  \overline{\alpha}\right)  ,0\right)  \right)  -y_{\gamma}%
^{\rho_{\varepsilon}}\left(  t;x,\overline{\alpha}\right)  \right\vert
\leq\omega_{\varepsilon}(t_{\varepsilon};\Phi(\varepsilon)),
\end{equation}
for $t\leq t_{\varepsilon}$. Moreover, when (\textbf{C}) holds true,
\begin{equation}
\lim_{\varepsilon\rightarrow0}\sup_{t\leq t_{\varepsilon}}\left\vert
\begin{array}
[c]{c}%
\int_{0}^{t}e^{-\delta s}l_{\gamma}^{\rho_{\varepsilon}}\left(  y_{\gamma
}^{\rho_{\varepsilon}}\left(  t;x,\left(  \mathcal{P}_{x}^{\varepsilon}\left(
\alpha\right)  ,0\right)  \right)  ,\left(  \mathcal{P}_{x}^{\varepsilon
}\left(  \alpha\right)  \left(  s\right)  ,0\right)  \right)  ds\\
-\int_{0}^{t}e^{-\delta s}l_{\gamma}\left(  y_{\gamma}\left(  s;x,\overline
{\alpha}\right)  ,\overline{\alpha}\left(  s\right)  \right)  ds
\end{array}
\right\vert =0.
\end{equation}

(ii) Moreover, if $\overline{\alpha}=\left(  \alpha,\beta\right)  \in
\overline{\mathcal{A}}_{ad},$ then, for every $\varepsilon>0$ there exists
$\left(  \mathcal{P}^{\varepsilon}\left(  \overline{\alpha}\right)  ,0\right)
\in\overline{\mathcal{A}}_{ad}$ such that the previous inequalities are
satisfied with $\mathcal{P}^{\varepsilon}\left(  \overline{\alpha}\right)
\left(  \cdot,x,\gamma\right)  $ replacing $\mathcal{P}_{x}^{\varepsilon
}\left(  \overline{\alpha}\right)  .$
\end{lemma}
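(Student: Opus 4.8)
The plan is to adapt the projection argument of Lemma~\ref{Projection_Lemma}, the perturbation to be absorbed now being the shaking displacement $\rho_\varepsilon$ rather than an initial gap, and the goal being to replace the shaking component $\beta$ by $0$ while keeping the trajectory inside $\overline{\mathcal{G}}^{+,\varepsilon}$. Write $z(t):=y_\gamma^{\rho_\varepsilon}(t;x,\overline{\alpha})$ for the reference trajectory, driven by the shaken field $f_\gamma(\cdot+\rho_\varepsilon\beta,\alpha)$ and admissible by hypothesis, and let $\tilde z(t):=y_\gamma^{\rho_\varepsilon}(t;x,(\mathcal{P}_x^\varepsilon(\alpha),0))$ be the trajectory to be built; since its shaking component vanishes, $\tilde z$ is driven by the \emph{unshaken} field $f_\gamma(\cdot,\mathcal{P}_x^\varepsilon(\alpha))$. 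I would define $\mathcal{P}_x^\varepsilon(\alpha)$ by copying $\alpha$ (and setting $b=0$) on the ``bulk'' of each branch, and switching to the controllability controls of \textbf{(Aa)}--\textbf{(Ab)} only inside thin correction windows placed around the critical points $O$, $(1+\varepsilon)e_j$ and $-\varepsilon e_j$, where the small transversal drift produced by dropping $\beta$ would otherwise push $\tilde z$ out of the domain.

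The quantitative backbone is a Gronwall comparison tuned so that $\omega_\varepsilon(\cdot\,;\cdot)$ is exactly the gap propagator: on any bulk interval of length $t$ the two fields differ only through the shaking, $|f_\gamma(\tilde z,\mathcal{P}_x^\varepsilon(\alpha))-f_\gamma(z+\rho_\varepsilon\beta,\alpha)|\le Lip(f)(|\tilde z-z|+\rho_\varepsilon)$, so a gap $r$ at the start grows to at most $\omega_\varepsilon(t;r)$, and the noted semigroup inequality $\omega_\varepsilon(t;\omega_\varepsilon(t^\ast;r))\le\omega_\varepsilon(t^\ast+t;r)$ lets me chain these estimates across all correction windows up to the horizon $t_\varepsilon$. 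Using $r'_\varepsilon$ as the transversal scale of a window, the shaking alone lets the gap build up to $\omega_\varepsilon(t_\varepsilon;r'_\varepsilon)$; the crucial quantity $\Phi(\varepsilon)$ is then the residual left after the single costly correction at an \emph{inactive} node, where \textbf{(Ab)} supplies only the degenerate push-back $\langle f_\gamma(x,a_{\gamma,j}^-),e_j\rangle\le-\beta\langle x,e_j\rangle^\kappa$: steering $\tilde z$ back to the feasible half-line from a distance $d$ costs a time $\int_0^d\frac{ds}{\beta s^\kappa}=\frac{d^{1-\kappa}}{(1-\kappa)\beta}$, during which the gap grows by at most $|f|_0$ times that time, so with $d=\omega_\varepsilon(t_\varepsilon;r'_\varepsilon)$ the residual is controlled by $\big(\tfrac{|f|_0}{(1-\kappa)\beta}+1\big)\big(\omega_\varepsilon(t_\varepsilon;r'_\varepsilon)\big)^{1-\kappa}=\Phi(\varepsilon)$; the active nodes and the inner/outer endpoints are treated the same way but are cheaper, their push-back being bounded below by $\beta$ uniformly. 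Propagating the residual one last time yields the trajectory bound $|\tilde z(t)-z(t)|\le\omega_\varepsilon(t_\varepsilon;\Phi(\varepsilon))$ for $t\le t_\varepsilon$.

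For the cost comparison under \textbf{(C)} I would split the integral into the bulk and the correction windows. On the bulk $\mathcal{P}_x^\varepsilon(\alpha)=\alpha$, so the integrands differ only through their space arguments and the $\rho_\varepsilon$-shift of the running cost, both bounded by \textbf{(A4)}: by $Lip(l)$ times the trajectory bound and by $Lip(l)\rho_\varepsilon$ respectively. Inside the windows the controls genuinely differ, but every window sits where the extended running cost is control-independent --- it equals $l(e_j,\gamma,\cdot)$ near $(1+\varepsilon)e_j$, and $l(O,\gamma,\cdot)$ near $O$ and near the inner nodes, both control-free by \textbf{(C)} --- so there too the two integrands agree up to the space-Lipschitz error $Lip(l)\,\omega_\varepsilon(t_\varepsilon;\Phi(\varepsilon))$. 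Hence the whole cost difference is dominated by a constant multiple of $\omega_\varepsilon(t_\varepsilon;\Phi(\varepsilon))+\rho_\varepsilon$, uniformly in $t\le t_\varepsilon$, which tends to $0$. Part (ii) follows exactly as in Lemma~\ref{Projection_Lemma}(ii) and Remark~\ref{RemarkAad}: all switching times and controllability controls entering the construction are measurable in $(x,\gamma)$, so gluing the maps $x\mapsto(\mathcal{P}_x^\varepsilon(\alpha),0)$ furnishes an element of $\overline{\mathcal{A}}_{ad}$ satisfying the same estimates.

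The genuine obstacle is the correction near inactive nodes, where controllability degenerates like $\langle x,e_j\rangle^\kappa$: one must quantify precisely the return time and the gap growth it induces, which is what forces the exponent $1-\kappa$ and the factor $\tfrac{|f|_0}{(1-\kappa)\beta}$ in $\Phi(\varepsilon)$. The final delicate point is checking that, after this budget is amplified by the Gronwall factor $e^{Lip(f)t_\varepsilon}$ over the exponentially long horizon $t_\varepsilon$, the bound $\omega_\varepsilon(t_\varepsilon;\Phi(\varepsilon))$ (and hence the cost difference) still vanishes as $\varepsilon\to0$; balancing this amplification against the shrinking budget is exactly what dictates the calibrated choice $\rho_\varepsilon=-\varepsilon^{1+2Lip(f)/((1-\kappa)\delta)}/\ln(\varepsilon)$ together with the definitions of $t_\varepsilon$ and $\Phi(\varepsilon)$.
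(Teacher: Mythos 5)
Your proposal is correct and follows essentially the same route as the paper's Appendix proof: a tracking construction that copies $\alpha$ (with $b=0$) away from the critical points, uses the controllability controls of \textbf{(Aa)}--\textbf{(Ab)} in correction windows near $O$ and the endpoints, propagates the gap by the Gronwall functional $\omega_{\varepsilon}$ with the shaking term $\rho_{\varepsilon}$ absorbed into its definition, identifies $\Phi(\varepsilon)$ as the deterioration incurred by the degenerate ($\kappa$-H\"older) push-back on an inactive branch, and invokes \textbf{(C)} plus the Lipschitz bound \textbf{(A4)} for the cost and measurability for part (ii). The only point you compress is the paper's ``renewal time'' bookkeeping showing that the costly correction occurs at most once between two collisions (so $\Phi(\varepsilon)$ is propagated once rather than iterated), but your chaining via the semigroup inequality for $\omega_{\varepsilon}$ is the same mechanism.
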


We postpone the proof of this Lemma to the Appendix. We emphasize that
whenever $\left(  \alpha,0\right)  \in\overline{\mathcal{A}}_{ad},$ one has
$y_{\gamma}^{\rho_{\varepsilon}}\left(  t;x,\left(  \mathcal{\alpha},0\right)
\right)  =y_{\gamma}\left(  t;x,\mathcal{\alpha}\right)  $ (and similar for
$l_{\gamma}^{\rho_{\varepsilon}},Q_{\gamma}^{\rho_{\varepsilon}}%
,\lambda_{\gamma}^{\rho_{\varepsilon}}$)$,$ even though $\alpha$ may not
belong to $\mathcal{A}_{\gamma,x}$. The second argument takes care of this
later issue.

\begin{lemma}
\label{Projection_Lemma_eps_Partii} Let us consider $T>0.$ Then, there exists
a decreasing function $\omega:%
\mathbb{R}
_{+}\longrightarrow%
\mathbb{R}
_{+}$ such $\omega\left(  0\right)  =\omega\left(  0+\right)  =0$ and whenever
$\gamma\in E$, $x\in\overline{\mathcal{G}},$ and $\left(  \alpha,0\right)
\in\overline{\mathcal{A}}_{\gamma,x},$ there exists $\mathcal{P}_{\gamma
,x}\left(  \alpha\right)  \in\mathcal{A}_{\gamma,x}$ such that
\[
\left.
\begin{array}
[c]{l}%
\left\vert y_{\gamma}\left(  t;x,\mathcal{\alpha}\right)  -y_{\gamma}\left(
t;x,\mathcal{P}_{\gamma,x}\left(  \alpha\right)  \right)  \right\vert
\leq\omega(\varepsilon),\\
\sup_{t\leq T}%
\begin{array}
[c]{c}%
\int_{0}^{t}e^{-\delta s}l_{\gamma}\left(  y_{\gamma}\left(  t;x,\mathcal{P}%
_{\gamma,x}\left(  \alpha\right)  \right)  ,\mathcal{P}_{\gamma,x}\left(
\alpha\right)  \left(  s\right)  \right)  ds\\
-\int_{0}^{t}e^{-\delta s}l_{\gamma}\left(  y_{\gamma}\left(
t;x,\mathcal{\alpha}\right)  ,\alpha\left(  s\right)  \right)  ds
\end{array}
\leq\omega\left(  \varepsilon\right)  ,
\end{array}
\right.
\]
and%
\[
\sup_{s\leq T}\left.
\begin{array}
[c]{c}%
\left\vert Q\left(  y_{\gamma}\left(  s;x,\mathcal{P}_{\gamma,x}\left(
\alpha\right)  \right)  ,\gamma,\gamma^{\prime},\mathcal{P}_{\gamma,x}\left(
\alpha\right)  \left(  s\right)  \right)  -Q\left(  y_{\gamma}\left(
s;x,\alpha\right)  ,\gamma,\gamma^{\prime},\alpha\left(  s\right)  \right)
\right\vert \\
+\left\vert \lambda\left(  y_{\gamma}\left(  s;x,\mathcal{P}_{\gamma,x}\left(
\alpha\right)  \right)  ,\gamma,\mathcal{P}_{\gamma,x}\left(  \alpha\right)
\left(  s\right)  \right)  -\lambda\left(  y_{\gamma}\left(  s;x,\alpha
\right)  ,\gamma,\alpha\left(  s\right)  \right)  \right\vert
\end{array}
\right.  \leq\omega\left(  \varepsilon\right)  ,
\]
for all $\gamma^{\prime}\in E.$

(ii) Moreover, if $\left(  \alpha,0\right)  \in\overline{\mathcal{A}}_{ad},$
then, for every $\varepsilon>0$ there exists $\mathcal{P}\left(
\alpha\right)  \in\mathcal{A}_{ad}$ such that the previous inequalities are
satisfied with $\mathcal{P}\left(  \alpha\right)  \left(  \cdot,x,\gamma
\right)  $ replacing $\mathcal{P}_{\gamma,x}\left(  \alpha\right)  \left(
\cdot\right)  .$
\end{lemma}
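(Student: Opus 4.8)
The plan is to build $\mathcal{P}_{\gamma,x}(\alpha)$ by keeping the control equal to $\alpha$ as long as the extended trajectory $y(\cdot):=y_{\gamma}(\cdot;x,\alpha)$ stays in $\overline{\mathcal{G}}$, and to modify it only on the (relatively open) time intervals during which $y(\cdot)$ visits one of the $\varepsilon$-stubs of $\mathcal{G}^{+,\varepsilon}$. Since $\gamma$ is fixed and the assumption (\textbf{Ab'}) forces $f_{\gamma}(O,\cdot)=0$ on inactive branches (while (\textbf{Ab}) gives $\langle f_{\gamma}(\cdot,a),e_{j}\rangle\leq0$ near $O$ there), a trajectory issued from $\overline{\mathcal{G}}$ can leave $\overline{\mathcal{G}}$ only through the far-end stubs $(1,1+\varepsilon)e_{j}$ or, on active branches with $j\leq M$, through the near-origin stubs $(-\varepsilon,0)e_{j}$. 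On each such excursion I would replace $\alpha$ by a push-back control furnished by the controllability hypotheses: an element of $A_{\gamma,e_{j}}$ (using $a_{\gamma,j}$ of (\textbf{Aa}) when $A_{\gamma,e_{j}}\neq A^{\gamma,j}$) near $e_{j}$, and $a_{\gamma,j}^{+}$ of (\textbf{Ab}) near $O$, so that the projected trajectory $\widetilde{y}(\cdot):=y_{\gamma}(\cdot;x,\mathcal{P}_{\gamma,x}(\alpha))$ is clamped to $\overline{\mathcal{G}}$ and stays within $O(\varepsilon)$ of the relevant critical point $e_{j}$ or $O$.

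The trajectory estimate would then follow from the one-sided Lipschitz bound (\textbf{A1}) and a Gr\"{o}nwall argument on the fixed horizon $[0,T]$. The key point is that the extension makes $f_{\gamma}$ constant on each stub (equal to its value at $e_{j}$, resp. $O$); together with the push-back strength $\beta$ this makes every excursion a short round trip at whose endpoints $\widetilde{y}$ and $y$ recoincide at the boundary point, so the $O(\varepsilon)$ mismatches created during distinct excursions do not accumulate but are merely propagated by the factor $e^{Lip(f)T}$. This yields $|\widetilde{y}(t)-y(t)|\leq\omega(\varepsilon)$ for $t\leq T$, with a modulus $\omega$ depending only on $|f|_{0}$, $Lip(f)$, $\beta$ and $T$, hence decreasing with $\omega(0+)=0$.

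For the cost inequality I would use that $l$, $\lambda$ and $Q$ are extended as the constant equal to their value at $e_{j}$ (resp. $O$) on the stubs, and that by (\textbf{C}) (resp. (\textbf{Ac'})) the function $l$ (resp. $\lambda,Q$) does not depend on the control at the critical points $O$ and $e_{j}$. Thus on each excursion the running cost accumulated by $y$ coincides, up to a Lipschitz error $O(\varepsilon)$ coming from the displacement, with the cost accumulated by the clamped $\widetilde{y}$; off the excursions the two controls and trajectories agree up to the bound just obtained. Feeding the trajectory estimate into (\textbf{A4}) for $l$ and into (\textbf{A2})--(\textbf{A3}) for $\lambda$ and $Q$, and taking the supremum over $t,s\leq T$, then gives the two remaining inequalities with the same modulus $\omega$.

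Finally, assertion (ii) would be obtained by making the construction measurable in the initial data: the hitting and exit times of the stubs and the selection of the push-back controls depend measurably on $(x,\gamma)$, exactly as in part (ii) of Lemma \ref{Projection_Lemma}; setting $\mathcal{P}(\alpha)(\cdot;x,\gamma):=\mathcal{P}_{\gamma,x}(\alpha)$ produces the required feedback in $\mathcal{A}_{ad}$ and the estimates hold $\mathbb{P}$-a.s. The hard part will be the clamping at $O$ in the \emph{active} case, where --- unlike on inactive branches --- no control fixes the trajectory at $O$: keeping $\widetilde{y}$ inside $\overline{\mathcal{G}}$ and within $O(\varepsilon)$ of $O$ while ensuring the excursion endpoints recoincide (so the Gr\"{o}nwall bound stays uniform in $\gamma,x,\alpha$) is the delicate step, and it is precisely there that (\textbf{C}) is needed to prevent the control switch from altering the cost.
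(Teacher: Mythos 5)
Your construction --- copy $\alpha$ while the extended trajectory stays in $\overline{\mathcal{G}}$, clamp the projected trajectory at the exit vertex ($e_{j}$ or $O$) during each excursion into an $\varepsilon$-stub (exact stopping in the inactive case, arbitrarily small round trips in the active case), recoincide at collision, and use (\textbf{C}) together with the constancy of the extended $l,\lambda,Q$ on the stubs for the cost estimates, deferring measurability to the argument of part (ii) of Lemma \ref{Projection_Lemma} --- is essentially the strategy the paper sketches in its appendix hints for this lemma. One caution: the non-accumulation of errors does not come from each excursion being a ``short round trip'' driven by the push-back strength $\beta$ (the target may linger in a stub, or leave the near-origin stub onto a different branch, for an arbitrarily long portion of $[0,T]$, and a residual mismatch at the end of each of possibly many excursions would accumulate); it comes from keeping the follower within an arbitrarily small radius of the vertex until an actual collision resets the mismatch to zero, after which both trajectories are driven by the same $f_{\gamma}$ and the same control and hence coincide, so no Gr\"{o}nwall factor is needed on the coincidence intervals.
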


Although the approach is rather obvious (when looking at the proofs of Lemmas
\ref{Projection_Lemma} or \ref{Projection_Lemma_eps}), hints on the proof are
given in the Appendix. We wish to emphasize that, although the trajectories
can be kept close up to a fixed $T$ due to the proximity of $\overline
{\mathcal{G}}^{+,\varepsilon}$ and $\overline{\mathcal{G}}$, we cannot do
better then $\varepsilon$. Thus, we are unable to give the same kind of
estimates up to $t_{\varepsilon}$.

The main result of the subsection is the following convergence theorem.

\begin{theorem}
\label{ThConvergence}Under the assumption (C), the following convergence holds
true
\[
\lim_{\varepsilon\rightarrow0}\sup_{x\in\overline{\mathcal{G}},\gamma\in
E}\left\vert v^{\delta,\varepsilon,\rho_{\varepsilon}}\left(  x,\gamma\right)
-v^{\delta}\left(  x,\gamma\right)  \right\vert =0.
\]

\end{theorem}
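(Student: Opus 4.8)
The plan is to prove the two-sided estimate
$\lim_{\varepsilon\to0}\sup_{x,\gamma}\lvert v^{\delta,\varepsilon,\rho_{\varepsilon}}(x,\gamma)-v^{\delta}(x,\gamma)\rvert=0$
by establishing the two inequalities separately, using the projection Lemmas \ref{Projection_Lemma_eps} and \ref{Projection_Lemma_eps_Partii} to transfer near-optimal controls between the original network $\overline{\mathcal{G}}$ and the enlarged network $\overline{\mathcal{G}}^{+,\varepsilon}$. First I would fix $(x,\gamma)\in\overline{\mathcal{G}}\times E$ and $\varepsilon>0$ small. Since the admissible controls on $\overline{\mathcal{G}}$ can be viewed (via the embedding of Remark \ref{RemarkAad}(b), taking $b=0$) as admissible controls on the larger graph that never use the extensions, one gets immediately that the infimum defining $v^{\delta,\varepsilon,\rho_{\varepsilon}}$ is over a larger class; however the coefficients are shifted by $\rho_{\varepsilon}b$, so one cannot directly conclude monotonicity. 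The clean way is: for the direction $v^{\delta,\varepsilon,\rho_{\varepsilon}}\le v^{\delta}+o(1)$, take a near-optimal $\alpha\in\mathcal{A}_{\gamma,x}$ for $v^{\delta}$, embed it with $b=0$, and control the perturbation $f_\gamma(x+\rho_{\varepsilon}b,a)=f_\gamma(x,a)$ when $b=0$, so that the trajectories and costs coincide exactly with the original ones; then this control is feasible on $\overline{\mathcal{G}}^{+,\varepsilon}$ and yields the upper bound with error $0$ on this side, modulo the usual tail $\tfrac{1}{\delta}e^{-\delta t_{\varepsilon}}\lvert l\rvert_0$.

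The substantive direction is $v^{\delta}\le v^{\delta,\varepsilon,\rho_{\varepsilon}}+o(1)$. Here I would take a near-optimal admissible control $\overline{\alpha}=(\alpha,\beta)\in\overline{\mathcal{A}}_{ad}$ for $v^{\delta,\varepsilon,\rho_{\varepsilon}}(x,\gamma)$ on the enlarged graph and must produce from it a control keeping the trajectory inside $\overline{\mathcal{G}}$ with comparable cost. This is done in two stages. The first stage applies Lemma \ref{Projection_Lemma_eps} to replace the $\rho_{\varepsilon}$-shaken dynamics with $b\equiv0$, obtaining $(\mathcal{P}^{\varepsilon}(\overline{\alpha}),0)\in\overline{\mathcal{A}}_{ad}$ whose trajectory stays $\omega_{\varepsilon}(t_{\varepsilon};\Phi(\varepsilon))$-close to the original one and whose running cost converges (using assumption \textbf{(C)}, which guarantees the cost is control-independent at the critical nodes so the projection does not change it in the limit). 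The key analytic point is that with the stated choice $\rho_{\varepsilon}:=-\varepsilon^{1+\frac{2Lip(f)}{(1-\kappa)\delta}}/\ln(\varepsilon)$ one has $\omega_{\varepsilon}(t_{\varepsilon};\Phi(\varepsilon))\to0$ as $\varepsilon\to0$; I would verify this by substituting $t_{\varepsilon}=-\tfrac1\delta\ln\!\big(\tfrac{\varepsilon\delta}{2\lvert f\rvert_0}\big)$ into $\omega_{\varepsilon}$, so that $e^{Lip(f)t_{\varepsilon}}$ grows polynomially in $1/\varepsilon$ at rate $Lip(f)/\delta$, which is exactly compensated by the chosen powers of $\varepsilon$ in $\rho_{\varepsilon}$ and in $\Phi(\varepsilon)$.

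The second stage applies Lemma \ref{Projection_Lemma_eps_Partii} to the resulting control $(\mathcal{P}^{\varepsilon}(\overline{\alpha}),0)\in\overline{\mathcal{A}}_{ad}$, producing a genuinely admissible $\mathcal{P}(\mathcal{P}^{\varepsilon}(\overline{\alpha}))\in\mathcal{A}_{ad}$ on $\overline{\mathcal{G}}$ whose trajectory, running cost, and the switching data $\lambda$ and $Q$ all stay within $\omega(\varepsilon)\to0$ of the enlarged-graph quantities up to the fixed horizon. I would then run the comparison exactly as in the proof of Theorem (uniform continuity of $v_m^\delta$) and of the Corollary: iterate the dynamic programming principle across jump times $\tau_1,\tau_2,\dots$, using that $\lambda$ is bounded so jumps do not accumulate, and accumulate the $\lambda,Q$ and cost errors through the Dynkin-type expansion (formula (\ref{I_m}) and the estimates (\ref{Ineq0})--(\ref{Ineq0.1})) with the continuity modulus $\omega^{\delta}$ of $v^{\delta}$ in place of $\omega_{m-1}$. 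Since all these bounds are uniform in $(x,\gamma)$ (by Remarks \ref{Uniform_remark_one}, \ref{Uniform_remark_two}, \ref{Uniform_remark_three} the moduli depend only on the norms and Lipschitz constants of $f,\lambda,Q,l$, which are unchanged by the extension), letting $\varepsilon\to0$ gives the claim uniformly. The main obstacle I anticipate is the two-horizon mismatch: Lemma \ref{Projection_Lemma_eps} controls things sharply up to $t_{\varepsilon}\to\infty$ but Lemma \ref{Projection_Lemma_eps_Partii} only up to a \emph{fixed} $T$ (as the authors warn, one cannot do better than $\varepsilon$ there); reconciling these requires first choosing $T=t_{\varepsilon}$ carefully or, more safely, fixing a large $T$, sending $\varepsilon\to0$ to kill the $\omega(\varepsilon)$ error on $[0,T]$, and only then sending $T\to\infty$ to kill the discounted tail $\tfrac{1}{\delta}e^{-\delta T}\lvert l\rvert_0$, so that the order of limits must be managed explicitly.
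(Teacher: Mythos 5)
Your overall architecture matches the paper's: the inequality $v^{\delta,\varepsilon,\rho_{\varepsilon}}\le v^{\delta}$ is indeed immediate from the definitions (a control on $\overline{\mathcal{G}}$ with $b=0$ is admissible for the extended problem and generates identical dynamics and cost), and the substantive direction is obtained, as you propose, by taking a near-optimal control $\overline{\alpha}$ from the dynamic programming principle for $v^{\delta,\varepsilon,\rho_{\varepsilon}}$ up to $T\wedge\tau_{1}$ and pushing it through the two projections $\mathcal{P}\circ\mathcal{P}^{\varepsilon}$ of Lemmas \ref{Projection_Lemma_eps} and \ref{Projection_Lemma_eps_Partii}, with assumption \textbf{(C)} and the uniform continuity modulus $\omega^{\delta}$ controlling the substitution errors.

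Where your plan diverges, and where the gap lies, is in how the argument is closed. The paper never iterates the DPP over the jump sequence and never sends $T\to\infty$: it performs a \emph{single} DPP step up to a fixed $T$, decomposes the resulting expression as $I_{1}+I_{2}+I_{3}+I_{4}$, and in $I_{2}$ and $I_{4}$ replaces $v^{\delta,\varepsilon,\rho_{\varepsilon}}$ at the (post-jump, resp.\ terminal) point by $v^{\delta}$ at the price of the unknown quantity $S_{\varepsilon}:=\sup_{z,\gamma'}\lvert v^{\delta,\varepsilon,\rho_{\varepsilon}}(z,\gamma')-v^{\delta}(z,\gamma')\rvert$, arriving at $S_{\varepsilon}\le\bigl[1-\delta\int_{0}^{T}e^{-(\delta+\lvert\lambda\rvert_{0})t}dt\bigr]S_{\varepsilon}+\omega(\varepsilon)$; since the bracket is a constant strictly less than $1$ for fixed $T$, one solves for $S_{\varepsilon}$ and lets $\varepsilon\to0$. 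Your proposal to "iterate the DPP across $\tau_{1},\tau_{2},\dots$ and then send $T\to\infty$" does not address the real difficulty: the terminal discrepancy at the first jump is again $v^{\delta,\varepsilon,\rho_{\varepsilon}}-v^{\delta}$ evaluated at a new point, weighted by $e^{-\delta\tau_{1}}$ with $\tau_{1}$ possibly arbitrarily small, so it is \emph{not} a discounted tail that $T\to\infty$ can kill. You must either absorb it into $S_{\varepsilon}$ as the paper does, or fully unroll the recursion and sum a geometric series of re-projection errors over the random jump sequence (re-applying Lemma \ref{Projection_Lemma_eps_Partii} from each new post-jump location, whose positions already differ by $\omega(\varepsilon)$ between the two processes). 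The latter is salvageable but is precisely the step your proposal leaves unspecified; as written, the recursion does not terminate.
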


\begin{proof}
The definition of our value functions yields $v^{\delta,\varepsilon
,\rho_{\varepsilon}}\leq v^{\delta}$ on $\overline{\mathcal{G}}\times E.$
Hence, we only need to prove the converse inequality. The proof is very
similar to that of Theorem 15 in \cite{G8}. Let us fix $\left(  x,\gamma
\right)  \in\overline{\mathcal{G}}\times E$, $T>0$ and (for the time being,)
$\varepsilon>0.$ Then using the dynamic programming principle for
$v^{\delta,\varepsilon,\rho_{\varepsilon}}$ one gets the existence of some
admissible control process $\overline{\alpha}$ such that
\begin{equation}
v^{\delta,\varepsilon,\rho_{\varepsilon}}\left(  x,\gamma\right)
\geq\mathbb{E}\left[
\begin{array}
[c]{c}%
\int_{0}^{T\wedge\tau_{1}}e^{-\delta t}l_{\gamma}^{\rho_{\varepsilon}}\left(
y_{\gamma}^{\rho_{\varepsilon}}\left(  t;x,\alpha\right)  ,\overline{\alpha
}\left(  t;x,\gamma\right)  \right)  dt\\
+e^{-\delta\left(  T\wedge\tau_{1}\right)  }v^{\delta}\left(  y_{\gamma}%
^{\rho_{\varepsilon}}\left(  T\wedge\tau_{1};x,\alpha\right)  ,\Gamma
_{T\wedge\tau_{1}}^{\rho_{\varepsilon},x,\gamma,\overline{\alpha}}\right)
\end{array}
\right]  -\varepsilon. \label{choicealphabar}%
\end{equation}
For simplicity, we let $\mathcal{P}$ and $\mathcal{P}^{\varepsilon}$ denote
the two projectors of the previous lemmas and introduce the following
notations:
\begin{align*}
\overline{\alpha}_{t}  &  =\overline{\alpha}\left(  t;x,\gamma\right)  ,\text{
}\alpha_{t}=\mathcal{P}\left(  \mathcal{P}^{\varepsilon}\left(  \overline
{\alpha}\right)  \right)  \left(  t;x,\gamma\right)  ,\\
\overline{\lambda}\left(  t\right)   &  =\lambda\left(  y_{\gamma}%
^{\rho_{\varepsilon}}\left(  t;x,\overline{\alpha}\right)  ,\gamma
,\overline{\alpha}_{t}\right)  ,\text{ }\overline{\Lambda}\left(  t\right)
=\exp\left(  -\int_{0}^{t}\overline{\lambda}\left(  s\right)  ds\right) \\
\lambda\left(  t\right)   &  =\lambda\left(  y_{\gamma}\left(  t;x,\alpha
\right)  ,\gamma,\alpha_{t}\right)  ,\text{ }\Lambda\left(  t\right)
=\exp\left(  -\int_{0}^{t}\lambda\left(  s\right)  ds\right)  ,
\end{align*}
for all $t\geq0$. We denote the right-hand member of the inequality
(\ref{choicealphabar}) by $I.$ Then, $I$ is explicitly given by
\begin{align*}
I  &  =\int_{0}^{T}\overline{\lambda}(t)\overline{\Lambda}\left(  t\right)
\int_{0}^{t}e^{-\delta s}l_{\gamma}^{\rho_{\varepsilon}}\left(  y_{\gamma
}^{\rho_{\varepsilon}}\left(  s;x,\overline{\alpha}\right)  ,\overline{\alpha
}_{s}\right)  dsdt\\
&  +\int_{0}^{T}\overline{\lambda}(t)\overline{\Lambda}\left(  t\right)
e^{-\delta t}\underset{\gamma^{\prime}\in E}{%
{\textstyle\sum}
}v^{\delta,\varepsilon,\rho_{\varepsilon}}\left(  y_{\gamma}^{\rho
_{\varepsilon}}\left(  t;x,\overline{\alpha}\right)  ,\gamma^{\prime}\right)
Q^{\rho_{\varepsilon}}\left(  y_{\gamma}^{\rho_{\varepsilon}}\left(
t;x,\overline{\alpha}\right)  ,\gamma,\gamma^{\prime},\overline{\alpha}%
_{t}\right)  dt\\
&  +\overline{\Lambda}\left(  T\right)  \int_{0}^{T}e^{-\delta t}l_{\gamma
}^{\rho_{\varepsilon}}\left(  y_{\gamma}^{\rho_{\varepsilon}}\left(
t;x,\overline{\alpha}\right)  ,\overline{\alpha}_{t}\right)  dt+\overline
{\Lambda}\left(  T\right)  e^{-\delta T}v^{\delta,\varepsilon,\rho
_{\varepsilon}}\left(  y_{\gamma}^{\rho_{\varepsilon}}\left(  T;x,\overline
{\alpha}\right)  ,\gamma\right) \\
&  =I_{1}+I_{2}+I_{3}+I_{4}.
\end{align*}
The conclusion follows using the Lemmas \ref{Projection_Lemma_eps} and
\ref{Projection_Lemma_eps_Partii}. These estimates are tailor-made to allow
substituting $\overline{\lambda}$, $\overline{\Lambda},$ $l_{\gamma}%
^{\rho_{\varepsilon}}$and $y_{\gamma}^{\rho_{\varepsilon}}$with $\lambda
,\Lambda,l_{\gamma}$ and $y_{\gamma}$ and the error is some (generic)
$\omega\left(  \varepsilon\right)  \underset{\varepsilon\rightarrow
0}{\rightarrow}0$ (the reader may also want to take a glance at the proof of
Theorem 15 in \cite{G8}). In the following, this function $\omega$ may change
from one line to another. Let us recall (see Remark \ref{Uniform_remark_three}%
) that $v^{\delta,\varepsilon,\rho_{\varepsilon}}$ have the same continuity
modulus (denoted $\omega^{\delta}$ and independent of $\varepsilon$). Then,
$v^{\delta,\varepsilon,\rho_{\varepsilon}}\left(  y_{\gamma}^{\rho
_{\varepsilon}}\left(  t;x,\overline{\alpha}\right)  ,\gamma^{\prime}\right)
$ can be replaced by $v^{\delta,\varepsilon,\rho_{\varepsilon}}\left(
y_{\gamma}\left(  t;x,\overline{\alpha}\right)  ,\gamma^{\prime}\right)  $
with an error $\omega^{\delta}\left(  \left\vert y_{\gamma}^{\rho
_{\varepsilon}}\left(  t;x,\overline{\alpha}\right)  -y_{\gamma}\left(
t;x,\alpha\right)  \right\vert \right)  ,$ hence, again some $\omega\left(
\varepsilon\right)  $. The only interesting terms in $I$ are $I_{2}$ and
$I_{4}$. For the term $I_{2},$ one writes%
\begin{align}
I_{2}  &  \geq\int_{0}^{T}\lambda(t)\Lambda\left(  t\right)  e^{-\delta
t}\underset{\gamma^{\prime}\in E}{%
{\textstyle\sum}
}v^{\delta,\varepsilon,\rho_{\varepsilon}}\left(  y_{\gamma}^{\rho
_{\varepsilon}}\left(  t;x,\overline{\alpha}\right)  ,\gamma^{\prime}\right)
Q^{\rho_{\varepsilon}}\left(  y_{\gamma}^{\rho_{\varepsilon}}\left(
t;x,\overline{\alpha}\right)  ,\gamma,\gamma^{\prime},\overline{\alpha}%
_{t}\right)  dt+\omega\left(  \varepsilon\right) \nonumber\\
&  \geq\int_{0}^{T}\lambda(t)\Lambda\left(  t\right)  e^{-\delta
t}\underset{\gamma^{\prime}\in E}{%
{\textstyle\sum}
}v^{\delta,\varepsilon,\rho_{\varepsilon}}\left(  y_{\gamma}\left(
t;x,\alpha\right)  ,\gamma^{\prime}\right)  Q\left(  y_{\gamma}\left(
t;x,\alpha\right)  ,\gamma,\gamma^{\prime},\alpha_{t}\right)  dt+\omega\left(
\varepsilon\right) \nonumber\\
&  \geq\int_{0}^{T}\lambda(t)\Lambda\left(  t\right)  e^{-\delta
t}\underset{\gamma^{\prime}\in E}{%
{\textstyle\sum}
}v^{\delta}\left(  y_{\gamma}\left(  t;x,\alpha\right)  ,\gamma^{\prime
}\right)  Q\left(  y_{\gamma}\left(  t;x,\alpha\right)  ,\gamma,\gamma
^{\prime},\alpha_{t}\right)  dt\nonumber\\
&  -\int_{0}^{T}\lambda(t)\Lambda\left(  t\right)  e^{-\delta t}dt\sup
_{\gamma^{\prime}\in E,z\in\overline{\mathcal{G}}}\left\vert v^{\delta
,\varepsilon,\rho_{\varepsilon}}(z,\gamma^{\prime})-v^{\delta}(z,\gamma
^{\prime})\right\vert +\omega\left(  \varepsilon\right)  \label{I3}%
\end{align}
Similar,
\begin{equation}
I_{4}\geq\Lambda\left(  T\right)  e^{-\delta T}v^{\delta}\left(  y_{\gamma
}\left(  T;x,\alpha\right)  \right)  -\Lambda\left(  T\right)  e^{-\delta
T}\sup_{\gamma^{\prime}\in E,z\in\overline{\mathcal{G}}}\left\vert
v^{\delta,\varepsilon,\rho_{\varepsilon}}(z,\gamma^{\prime})-v^{\delta
}(z,\gamma^{\prime})\right\vert +\omega\left(  \varepsilon\right)  .
\label{I4}%
\end{equation}
Hence, using (\ref{I3}, \ref{I4}), one gets
\begin{align*}
I  &  \geq\int_{0}^{T}\lambda(t)\Lambda\left(  t\right)  \int_{0}%
^{t}e^{-\delta s}l_{\gamma}\left(  y_{\gamma}\left(  s;x,\alpha\right)
,\alpha_{s}\right)  dsdt\\
&  +\int_{0}^{T}\lambda(t)\Lambda\left(  t\right)  e^{-\delta t}%
\underset{\gamma^{\prime}\in E}{%
{\textstyle\sum}
}v^{\delta}\left(  y_{\gamma}\left(  t;x,\alpha\right)  ,\gamma^{\prime
}\right)  Q\left(  y_{\gamma}\left(  t;x,\alpha\right)  ,\gamma,\gamma
^{\prime},\alpha_{t}\right)  dt\\
&  +\Lambda\left(  T\right)  \int_{0}^{T}e^{-\delta t}l_{\gamma}\left(
y_{\gamma}\left(  t;x,\alpha\right)  ,\alpha_{t}\right)  dt+\Lambda\left(
T\right)  e^{-\delta T}v^{\delta}\left(  y_{\gamma}\left(  T;x,\alpha\right)
,\gamma\right) \\
&  -\left[  \int_{0}^{T}\lambda(t)\Lambda\left(  t\right)  e^{-\delta
t}dt+\Lambda\left(  T\right)  e^{-\delta T}\right]  \sup_{\gamma^{\prime}\in
E,z\in\overline{\mathcal{G}}}\left\vert v^{\delta,\varepsilon,\rho
_{\varepsilon}}(z,\gamma^{\prime})-v^{\delta}(z,\gamma^{\prime})\right\vert
+\omega\left(  \varepsilon\right)  .
\end{align*}
Then, using the dynamic programming principle for $v^{\delta}$ and
(\ref{choicealphabar})$,$ one gets
\begin{align*}
v^{\delta,\varepsilon,\rho_{\varepsilon}}\left(  x,\gamma\right)   &  \geq
v^{\delta}(x,\gamma)-\left[  1-\delta\int_{0}^{T}\Lambda\left(  t\right)
e^{-\delta t}dt\right]  \sup_{\gamma^{\prime}\in E,z\in\overline{\mathcal{G}}%
}\left\vert v^{\delta,\varepsilon,\rho_{\varepsilon}}(z,\gamma^{\prime
})-v^{\delta}(z,\gamma^{\prime})\right\vert +\omega\left(  \varepsilon\right)
\\
&  \geq v^{\delta}(x,\gamma)-\left[  1-\delta\int_{0}^{T}e^{-\left(
\delta+\left\vert \lambda\right\vert _{0}\right)  t}dt\right]  \sup
_{\gamma^{\prime}\in E,z\in\overline{\mathcal{G}}}\left\vert v^{\delta
,\varepsilon,\rho_{\varepsilon}}(z,\gamma^{\prime})-v^{\delta}(z,\gamma
^{\prime})\right\vert +\omega\left(  \varepsilon\right)
\end{align*}
Thus,
\[
(0\leq)v^{\delta}(x,\gamma)-v^{\delta,\varepsilon,\rho_{\varepsilon}}\left(
x,\gamma\right)  \leq\left[  1-\delta\int_{0}^{T}e^{-\left(  \delta+\left\vert
\lambda\right\vert _{0}\right)  t}dt\right]  \sup_{\gamma^{\prime}\in
E,z\in\overline{\mathcal{G}}}\left\vert v^{\delta,\varepsilon,\rho
_{\varepsilon}}(z,\gamma^{\prime})-v^{\delta}(z,\gamma^{\prime})\right\vert
+\omega\left(  \varepsilon\right)  .
\]
The conclusion follows by taking the supremum over $x\in\overline{\mathcal{G}%
}$ and $\gamma\in E$ and allowing $\varepsilon\rightarrow0.$
\end{proof}

\begin{remark}
We recall (cf. Remark \ref{Uniform_remark_three}) that $v^{\delta
,\varepsilon,\rho_{\varepsilon}}$ have the same continuity modulus
(independent of $\varepsilon$). Moreover, $v^{\delta,\varepsilon
,\rho_{\varepsilon}}\left(  \cdot\right)  \leq\frac{\left\vert l\right\vert
_{0}}{\delta}$. Therefore, applying Arzela-Ascoli Theorem, there exists
$\lim_{\varepsilon\rightarrow0}\left(  v^{\delta,\varepsilon,\rho
_{\varepsilon}}\mid_{\overline{\mathcal{G}}}\right)  $ and this limit is
uniformly continuous. It would have sufficed, therefore, to prove that
$\lim_{\varepsilon\rightarrow0}v^{\delta,\varepsilon,\rho_{\varepsilon}%
}\left(  x;\gamma\right)  =v^{\delta}\left(  x,\gamma\right)  $ for all
$x\in\underset{i=1,2,...,N}{\cup}\left(  0,1\right)  e_{i}.$
\end{remark}

\subsection{Linearizing the problem}

We assume the (pseudo-)controllability conditions (\textbf{Aa, Ab, Ad}), the
compatibility at the intersection\textbf{ (Ab', Ac'), }the regularity of the
coefficients and cost functions \textbf{(A1-A4)}, the compatibility condition
(\textbf{B}) and the projection compatibility condition \textbf{(C}) to hold true.

\subsubsection{Smooth subsolutions\label{Subsubsection5.6.1}}

Starting from $v^{\delta,\varepsilon,\rho_{\varepsilon}},$ we will construct a
family of smooth subsolutions of (\ref{HJeps}) (with $\varepsilon=\rho=0$)
that converge to the value function $v^{\delta}.$ To this purpose, we
regularize the functions $v^{\delta,\varepsilon,\rho_{\varepsilon}}$ in each
direction given by $e_{j},$ for $j=1,2,...,N.$ Finally, we conveniently modify
the value at the junction point $O.$

We begin by picking $\left(  \psi_{\epsilon}\right)  _{\epsilon}$ to be a
sequence of standard mollifiers $\psi_{\epsilon}\left(  y\right)  =\frac
{1}{\epsilon}\psi\left(  \frac{y}{\epsilon}\right)  ,$ $y\in%
\mathbb{R}
,$ $\epsilon>0,$ where $\psi\in C^{\infty}\left(
\mathbb{R}
\right)  $ is a positive function such that
\[
Supp(\psi)\subset\left[  -1,1\right]  \text{ and }\int_{%
\mathbb{R}
}\psi(y)dy=1.
\]
For every $\varepsilon>0$ and every $0<\epsilon\leq\rho_{\varepsilon},$ one
can define regular functions $v_{\varepsilon,\epsilon}^{\delta,j}$ by setting
\[
v_{\varepsilon,\epsilon}^{\delta,j}\left(  x,\gamma\right)  =\int%
_{-\varepsilon}^{\varepsilon}v^{\delta,\varepsilon,\rho_{\varepsilon}}\left(
x-ye_{j},\gamma\right)  \psi_{\epsilon}\left(  y\right)  dy,
\]
for all $x\in\left(  -\varepsilon,1+\varepsilon\right)  e_{j},$ $j=1,M,$ or
$x\in\left(  -1-\varepsilon,1+\varepsilon\right)  e_{j},$ if $M<j\leq N.$
Using the same methods as those employed in \cite{G7}, Appendix (see also
\cite{G8}, Appendix A2 or \cite{krylov_00} or \cite{barles_jakobsen_02}, Lemma
2.7), it is easy to prove that
\begin{equation}
\delta v_{\varepsilon,\epsilon}^{\delta,j}\left(  x,\gamma\right)  +\left\{
\begin{array}
[c]{c}%
-\left\langle f_{\gamma}\left(  x,a\right)  ,Dv_{\varepsilon,\epsilon}%
^{\delta,j}\left(  x,\gamma\right)  \right\rangle -l_{\gamma}\left(
x,a\right) \\
-\lambda\left(  x,\gamma,a\right)
{\textstyle\sum\limits_{\gamma^{\prime}\in E}}
Q\left(  x,\gamma,\gamma^{\prime},a\right)  \left(  v_{\varepsilon,\epsilon
}^{\delta,j}\left(  x,\gamma^{\prime}\right)  -v_{\varepsilon,\epsilon
}^{\delta,j}\left(  x,\gamma\right)  \right)
\end{array}
\right\}  \leq0,
\end{equation}
for all $x\in\left[  0,1\right]  e_{j},$ $j\leq N$ and all $a\in A^{\gamma
,j}.$ Also, we note that
\[
\left\vert v_{\varepsilon,\epsilon}^{\delta,j}\left(  x,\gamma\right)
-v^{\delta}\left(  x,\gamma\right)  \right\vert \leq\left\vert v^{\delta
,\varepsilon,\rho_{\varepsilon}}-v^{\delta}\right\vert _{0}+\omega^{\delta
}\left(  \epsilon\right)  =:\omega\left(  \varepsilon,\epsilon\right)  ,
\]
for all $x,\gamma\in\overline{\mathcal{G}}\times E$, where $\omega^{\delta}$
is the continuity modulus of $v^{\delta}$ (with respect to the space
component). Theorem \ref{ThConvergence} yields
\[
\lim_{\varepsilon,\epsilon\rightarrow0}\omega\left(  \varepsilon
,\epsilon\right)  =0.
\]
We define an admissible test function by setting
\[
v_{\varepsilon}^{\delta}\left(  x,\gamma\right)  =v_{\varepsilon
,\rho_{\varepsilon}}^{\delta,j}\left(  x,\gamma\right)  -v_{\varepsilon
,\rho_{\varepsilon}}^{\delta,j}\left(  O,\gamma\right)  +\min_{j^{\prime
}=1,2,...,N}v_{\varepsilon,\rho_{\varepsilon}}^{\delta,j^{\prime}}\left(
O,\gamma\right)  -4\frac{\left\vert \lambda\right\vert _{0}}{\delta}%
\omega\left(  \varepsilon,\rho_{\varepsilon}\right)  ,
\]
for $x\in\left[  0,1\right]  e_{j},$ $1\leq j\leq N$ and $\gamma\in E.$ Then
$v_{\varepsilon}^{\delta}$ is a regular test function (continuous at $O$)
which satisfies
\begin{equation}
\left\{
\begin{array}
[c]{l}%
\left(
\begin{array}
[c]{c}%
\delta v_{\varepsilon}^{\delta}\left(  x,\gamma\right)  -\left\langle
f_{\gamma}\left(  x,a\right)  ,Dv_{\varepsilon}^{\delta}\left(  x,\gamma
\right)  \right\rangle -l_{\gamma}\left(  x,a\right) \\
-\lambda\left(  x,\gamma,a\right)
{\textstyle\sum\limits_{\gamma^{\prime}\in E}}
Q\left(  x,\gamma,\gamma^{\prime},a\right)  \left(  v_{\varepsilon}^{\delta
}\left(  x,\gamma^{\prime}\right)  -v_{\varepsilon}^{\delta}\left(
x,\gamma\right)  \right)
\end{array}
\right)  \leq0,\text{ and }\\
\lim_{\varepsilon\rightarrow0}\left\vert v_{\varepsilon}^{\delta}-v^{\delta
}\right\vert _{0}=0,
\end{array}
\right.  \label{estimatesEta}%
\end{equation}
for all $\left(  x,\gamma,a\right)  $ such that $x\in\left[  0,1\right]
e_{j},$ $\gamma\in E,$ $a\in A^{\gamma,j},$ $j\leq N$. These functions are
Lipschitz continuous on $\overline{\mathcal{G}}.$ (In fact, the reader can
check rather easily that the Lipschitz constant of $v_{\varepsilon}^{\delta}$
does not exceed $\sqrt{2}\frac{\underset{1\leq j\leq N}{\max}\left\vert
D\left(  v_{\varepsilon}^{\delta}\mid_{\left[  -1,1\right]  e_{j}}\right)
\right\vert _{0}}{\sqrt{1-\underset{i^{\prime},j^{\prime}\in\left\{
1,...,M+\frac{N-M}{2}\right\}  ,\text{ }i^{\prime}\neq j^{\prime}}{\max}%
\cos\left(  e_{i^{\prime}},e_{j^{\prime}}\right)  }}$). Hence, (using
Kirszbraun's Theorem,) one can find an extension (explicitly given by
\[
\widetilde{v}_{\varepsilon}^{\delta}\left(  x,\gamma\right)  :=\inf
_{y\in\overline{\mathcal{G}}}\left(  v_{\varepsilon}^{\delta}\left(
y,\gamma\right)  +Lip\left(  v_{\varepsilon}^{\delta}\right)  \left\vert
x-y\right\vert \right)  \text{)}%
\]
which is Lipschitz continuous on $%
\mathbb{R}
^{m}.$ As a by-product, this function (identified with $v_{\varepsilon
}^{\delta}\left(  \cdot,\gamma\right)  $ whenever no confusion is at risk) is
absolutely continuous on $%
\mathbb{R}
^{m}$ $\left(  AC\left(
\mathbb{R}
^{m}\right)  \right)  .$

\subsubsection{Occupation measures and embedding}

To every admissible control $\alpha\in\mathcal{A}_{ad}^{%
\mathbb{N}
}$ and $\gamma\in E,x\in\overline{\mathcal{G}}$, we can associate a
probability measure $\mu^{x,\gamma,\alpha}\in\mathcal{P}\left(
\mathbb{R}
^{m}\times E\times A\right)  $ by setting
\[
\mu^{x,\gamma,\alpha}\left(  A\times B\times C\right)  =\delta\mathbb{E}%
\left[  \int_{0}^{\infty}e^{-\delta t}1_{A\times B\times C}\left(
X_{t}^{x,\gamma,\alpha},\Gamma_{t}^{x,\gamma,\alpha},\alpha_{t}\right)
\right]  ,
\]
for all Borel sets $A\times B\times C\subset%
\mathbb{R}
^{m}\times E\times A$. As before, if $\left(  \tau_{i}\right)  _{i\geq0}$
denote the switch times, then $\alpha_{t}=\alpha_{i+1}\left(  t-\tau
_{i},X_{\tau_{i}}^{x,\gamma,\alpha},\Gamma_{\tau_{i}}^{x,\gamma,\alpha
}\right)  $ on $t\in\left[  \tau_{i},\tau_{i+1}\right)  $. Obviously, the
choice of admissible controls (under constraints) yields
\[
Supp\left(  \mu^{x,\gamma,\alpha}\right)  \subset\widehat{\overline
{\mathcal{G}}\times E\times A}:=\left\{  \left(  y,\gamma^{\prime},a\right)
\in\overline{\mathcal{G}}\times E\times A:a\in A^{\gamma^{\prime},j}\text{
whenever }y\in\overline{J_{j}}\right\}  .
\]
We note that the set $\widehat{\overline{\mathcal{G}}\times E\times A}$ is compact.

We denote by $BAC\left(
\mathbb{R}
^{m}\times E;%
\mathbb{R}
\right)  $ the set of all bounded functions $\varphi:%
\mathbb{R}
^{m}\times E\longrightarrow%
\mathbb{R}
$ such that $\varphi\left(  \cdot,\gamma^{\prime}\right)  \in AC\left(
\mathbb{R}
^{m}\right)  $ for all $\gamma^{\prime}\in E.$ Then, It\^{o}'s formula (see
\cite[Theorem 31.3 ]{davis_93}) yields
\begin{align}
&  \delta e^{-\delta T}\mathbb{E}\left[  \varphi\left(  X_{T}^{x,\gamma
,\alpha},\Gamma_{T}^{x,\gamma,\alpha}\right)  \right] \nonumber\\
&  =\delta\varphi\left(  x,\gamma\right)  +\mathbb{E}\int_{0}^{T}\delta
e^{-\delta s}\left[  -\delta\varphi\left(  X_{t}^{x,\gamma,\alpha},\Gamma
_{t}^{x,\gamma,\alpha}\right)  +\mathcal{U}^{\alpha_{t}}\varphi\left(
X_{t}^{x,\gamma,\alpha},\Gamma_{t}^{x,\gamma,\alpha}\right)  \right]  dt.
\label{ItoT}%
\end{align}
Here,
\[
\mathcal{U}^{a}\varphi\left(  y,\gamma^{\prime}\right)  =\left\langle
f_{\gamma}\left(  y,a\right)  ,D\varphi\left(  y,\gamma^{\prime}\right)
\right\rangle +\lambda\left(  y,\gamma^{\prime},a\right)
{\textstyle\sum\limits_{\gamma^{\prime\prime}\in E}}
Q\left(  y,\gamma^{\prime},\gamma^{\prime\prime},a\right)  \left(
\varphi\left(  \gamma^{\prime\prime},x\right)  -\varphi\left(  y,\gamma
^{\prime}\right)  \right)  ,
\]
for regular $\varphi\left(  \cdot,\gamma\right)  \in C_{b}^{1}\left(
\mathbb{R}
^{m}\right)  $ is the classical generator of the PDMP. We recall that the
extended domain of $\mathcal{U}^{a}$ includes functions such that
$\varphi\left(  \cdot,\gamma^{\prime}\right)  \in AC\left(
\mathbb{R}
^{m}\right)  $ (cf. Theorem 31.3 in \cite{davis_93}). Hence, passing to the
limit as $T\rightarrow\infty$ in (\ref{ItoT}) (and recalling that $\varphi$ is
bounded), one gets%
\[
\int_{%
\mathbb{R}
^{m}\times E\times A}\left[  \mathcal{U}^{a}\varphi\left(  y,\gamma^{\prime
}\right)  -\delta\left[  \varphi\left(  y,\gamma^{\prime}\right)
-\varphi\left(  x,\gamma\right)  \right]  \right]  \mu^{x,\gamma,\alpha
}\left(  dyd\gamma^{\prime}da\right)  =0.
\]
We set
\begin{equation}
\left.
\begin{array}
[c]{l}%
\Theta_{\overline{\mathcal{G}}}^{0}\left(  x,\gamma\right)  :=\left\{
\mu^{x,\gamma,\alpha}:\alpha\in\mathcal{A}_{ad}^{%
\mathbb{N}
}\right\}  \text{ and}\\
\Theta_{\overline{\mathcal{G}}}\left(  x,\gamma\right)  :=\left\{
\begin{array}
[c]{c}%
\mu^{x,\gamma,\alpha}\in\mathcal{P}\left(  \widehat{\overline{\mathcal{G}%
}\times E\times A}\right)  :\forall\varphi\in BAC\left(
\mathbb{R}
^{m}\times E;%
\mathbb{R}
\right) \\
\int_{%
\mathbb{R}
^{m}\times E\times A}\left[  -\mathcal{U}^{a}\varphi\left(  y,\gamma^{\prime
}\right)  +\delta\left[  \varphi\left(  y,\gamma^{\prime}\right)
-\varphi\left(  x,\gamma\right)  \right]  \right]  \mu\left(  dyd\gamma
^{\prime}da\right)  =0.
\end{array}
\right\}
\end{array}
\right.  \label{ThetaG}%
\end{equation}
We are now able to state (and prove) the main linearization result.

\begin{theorem}
\label{TH_Lin_Network}The following equalities hold true%
\begin{align*}
&  \delta v^{\delta}\left(  x,\gamma\right) \\
&  =\Lambda^{\delta}\left(  x,\gamma\right)  :=\inf_{\mu\in\Theta
_{\overline{\mathcal{G}}}\left(  x,\gamma\right)  }\int_{%
\mathbb{R}
^{m}\times E\times A}l_{\gamma^{\prime}}\left(  y,a\right)  \mu\left(
dyd\gamma^{\prime}da\right) \\
&  =\Lambda^{\delta,\ast}\left(  x,\gamma\right)  :=\sup\left\{
\begin{array}
[c]{c}%
\eta\in%
\mathbb{R}
:\exists\varphi\in BAC\left(
\mathbb{R}
^{m}\times E;%
\mathbb{R}
\right)  ,\text{ for all }\left(  y,\gamma^{\prime},a\right)  \in
\widehat{\overline{\mathcal{G}}\times E\times A},\\
\eta\leq\mathcal{U}^{a}\varphi\left(  y,\gamma^{\prime}\right)  +l_{\gamma
^{\prime}}\left(  y,a\right)  -\delta\left[  \varphi\left(  y,\gamma^{\prime
}\right)  -\varphi\left(  x,\gamma\right)  \right]  .
\end{array}
\right\}  ,
\end{align*}
for all $\left(  x,\gamma\right)  \in\overline{\mathcal{G}}\times E$.
\end{theorem}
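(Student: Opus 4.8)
The plan is to establish the two equalities by a linear-programming duality argument that arranges the three quantities into the cycle $\delta v^{\delta}\leq\Lambda^{\delta,\ast}\leq\Lambda^{\delta}\leq\delta v^{\delta}$, which forces all of them to coincide. The three ingredients are: a primal bound coming from the occupation measures attached to admissible controls, weak duality, and the smooth subsolutions built in Subsection \ref{Subsubsection5.6.1} together with the convergence Theorem \ref{ThConvergence}.

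First I would show $\Lambda^{\delta}\left(  x,\gamma\right)  \leq\delta v^{\delta}\left(  x,\gamma\right)  $. By definition of the occupation measure $\mu^{x,\gamma,\alpha}$, one has $\delta\mathbb{E}\left[  \int_{0}^{\infty}e^{-\delta t}l_{\Gamma_{t}}\left(  X_{t},\alpha_{t}\right)  dt\right]  =\int l_{\gamma^{\prime}}\left(  y,a\right)  \mu^{x,\gamma,\alpha}\left(  dy\,d\gamma^{\prime}\,da\right)  $, so that $\delta v^{\delta}\left(  x,\gamma\right)  =\inf_{\mu\in\Theta_{\overline{\mathcal{G}}}^{0}\left(  x,\gamma\right)  }\int l\,d\mu$. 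The Dynkin identity established just above (via Theorem 31.3 in \cite{davis_93}) shows that every $\mu^{x,\gamma,\alpha}$ satisfies the constraint defining $\Theta_{\overline{\mathcal{G}}}\left(  x,\gamma\right)  $, while the support condition $Supp\left(  \mu^{x,\gamma,\alpha}\right)  \subset\widehat{\overline{\mathcal{G}}\times E\times A}$ places it in $\mathcal{P}\left(  \widehat{\overline{\mathcal{G}}\times E\times A}\right)  $; hence $\Theta_{\overline{\mathcal{G}}}^{0}\left(  x,\gamma\right)  \subset\Theta_{\overline{\mathcal{G}}}\left(  x,\gamma\right)  $, and the infimum over the larger set can only decrease, which yields the claimed bound.

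Next, weak duality $\Lambda^{\delta,\ast}\leq\Lambda^{\delta}$ is immediate. For any feasible pair $\left(  \eta,\varphi\right)  $ for $\Lambda^{\delta,\ast}$ and any $\mu\in\Theta_{\overline{\mathcal{G}}}\left(  x,\gamma\right)  $, I would integrate the defining inequality $\eta\leq\mathcal{U}^{a}\varphi\left(  y,\gamma^{\prime}\right)  +l_{\gamma^{\prime}}\left(  y,a\right)  -\delta\left[  \varphi\left(  y,\gamma^{\prime}\right)  -\varphi\left(  x,\gamma\right)  \right]  $ against $\mu$. Since $\mu$ is a probability measure supported in $\widehat{\overline{\mathcal{G}}\times E\times A}$, the left side integrates to $\eta$, and by the constraint defining $\Theta_{\overline{\mathcal{G}}}\left(  x,\gamma\right)  $ the term $\int\left[  \mathcal{U}^{a}\varphi-\delta\left(  \varphi-\varphi\left(  x,\gamma\right)  \right)  \right]  d\mu$ vanishes, leaving $\eta\leq\int l\,d\mu$. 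Taking the supremum over $\eta$ and the infimum over $\mu$ gives the inequality.

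The decisive step, and the only one where the real work of the paper is used, is $\delta v^{\delta}\leq\Lambda^{\delta,\ast}$. Here I would feed the smooth subsolutions $v_{\varepsilon}^{\delta}$ of Subsection \ref{Subsubsection5.6.1} directly into the dual problem. Rewriting the subsolution inequality (\ref{estimatesEta}) through the generator gives $\mathcal{U}^{a}v_{\varepsilon}^{\delta}\left(  y,\gamma^{\prime}\right)  +l_{\gamma^{\prime}}\left(  y,a\right)  \geq\delta v_{\varepsilon}^{\delta}\left(  y,\gamma^{\prime}\right)  $ for all $\left(  y,\gamma^{\prime},a\right)  \in\widehat{\overline{\mathcal{G}}\times E\times A}$, the admissible test set matching exactly the region $\left[  0,1\right]  e_{j}\times\left\{  a\in A^{\gamma^{\prime},j}\right\}  $ on which the subsolution property holds. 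Consequently the pair $\left(  \eta,\varphi\right)  =\left(  \delta v_{\varepsilon}^{\delta}\left(  x,\gamma\right)  ,\,v_{\varepsilon}^{\delta}\right)  $ is feasible for $\Lambda^{\delta,\ast}\left(  x,\gamma\right)  $, since the right-hand side of the dual constraint is bounded below by $\delta v_{\varepsilon}^{\delta}\left(  y,\gamma^{\prime}\right)  -\delta v_{\varepsilon}^{\delta}\left(  y,\gamma^{\prime}\right)  +\delta v_{\varepsilon}^{\delta}\left(  x,\gamma\right)  =\delta v_{\varepsilon}^{\delta}\left(  x,\gamma\right)  $. As $v_{\varepsilon}^{\delta}\in BAC\left(  \mathbb{R}^{m}\times E;\mathbb{R}\right)  $ after the Kirszbraun extension, this gives $\Lambda^{\delta,\ast}\left(  x,\gamma\right)  \geq\delta v_{\varepsilon}^{\delta}\left(  x,\gamma\right)  $, and letting $\varepsilon\rightarrow0$ through the uniform convergence $\left\vert v_{\varepsilon}^{\delta}-v^{\delta}\right\vert _{0}\rightarrow0$ of Theorem \ref{ThConvergence} closes the chain. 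The main obstacle is entirely upstream: it lies in producing the regular dual-feasible functions $v_{\varepsilon}^{\delta}$ and their convergence, which demanded the graph extension, the mirrored/inertial dynamics, Krylov's shaking-the-coefficients regularization and the projection Lemmas \ref{Projection_Lemma_eps} and \ref{Projection_Lemma_eps_Partii}; once these are in hand, the duality itself is a short convex-analytic argument.
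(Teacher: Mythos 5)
Your proposal is correct and follows essentially the same route as the paper: the same cyclic chain of inequalities, with $\Theta_{\overline{\mathcal{G}}}^{0}\subset\Theta_{\overline{\mathcal{G}}}$ giving $\Lambda^{\delta}\leq\delta v^{\delta}$, integration of the dual constraint against $\mu$ giving weak duality, and the smooth subsolutions $v_{\varepsilon}^{\delta}$ from Subsection \ref{Subsubsection5.6.1} together with (\ref{estimatesEta}) supplying the feasible pairs that close the loop. No gaps.
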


\begin{proof}
Let us fix $\left(  x,\gamma\right)  \in\overline{\mathcal{G}}\times E$. It is
clear that
\[
\delta v^{\delta}\left(  x,\gamma\right)  \geq\inf_{\mu\in\Theta
_{\overline{\mathcal{G}}}\left(  x,\gamma\right)  }\int_{%
\mathbb{R}
^{m}\times E\times A}l_{\gamma^{\prime}}\left(  y,a\right)  \mu\left(
dyd\gamma^{\prime}da\right)
\]
since $\Theta_{\overline{\mathcal{G}}}^{0}\left(  x,\gamma\right)
\subset\Theta_{\overline{\mathcal{G}}}\left(  x,\gamma\right)  .$ Next, if
$\eta\leq\mathcal{U}^{a}\varphi\left(  y,\gamma^{\prime}\right)
+l_{\gamma^{\prime}}\left(  y,a\right)  -\delta\left[  \varphi\left(
y,\gamma^{\prime}\right)  -\varphi\left(  x,\gamma\right)  \right]  ,$ for all
$\left(  y,\gamma^{\prime},a\right)  \in\widehat{\overline{\mathcal{G}}\times
E\times A},$ then, due to the definition of $\Theta_{\overline{\mathcal{G}}%
}\left(  x,\gamma\right)  ,$ if $\mu\in\Theta_{\overline{\mathcal{G}}}\left(
x,\gamma\right)  ,$ by integrating the inequality w.r.t. $\mu$, it follows
that
\[
\int_{%
\mathbb{R}
^{m}\times E\times A}l_{\gamma^{\prime}}\left(  y,a\right)  \mu\left(
dyd\gamma^{\prime}da\right)  \geq\eta.
\]
Hence, $\Lambda^{\delta}\left(  x,\gamma\right)  \geq\Lambda^{\delta,\ast
}\left(  x,\gamma\right)  .$ To complete the proof, one needs to prove
$\Lambda^{\delta,\ast}\left(  x,\gamma\right)  \geq\delta v^{\delta}\left(
x,\gamma\right)  $. We use $v_{\varepsilon}^{\delta}$ given in Subsubsection
\ref{Subsubsection5.6.1} to infer%
\[
\delta v_{\varepsilon}^{\delta}\left(  x,\gamma\right)  \leq\mathcal{U}%
^{a}v_{\varepsilon}^{\delta}\left(  y,\gamma^{\prime}\right)  +l_{\gamma
^{\prime}}\left(  y,a\right)  -\delta\left[  v_{\varepsilon}^{\delta}\left(
y,\gamma^{\prime}\right)  -v_{\varepsilon}^{\delta}\left(  x,\gamma\right)
\right]  ,
\]
for all $\left(  y,\gamma^{\prime},a\right)  \in\widehat{\overline
{\mathcal{G}}\times E\times A}$. Hence, $\delta v_{\varepsilon}^{\delta
}\left(  x,\gamma\right)  \leq\Lambda^{\delta,\ast}\left(  x,\gamma\right)  .$
The proof is completed by taking the limit as $\varepsilon\rightarrow0$ and
recalling that (\ref{estimatesEta}) holds true.
\end{proof}

\subsection{Conclusion and comments}

The previous result can be interpreted in connection to Perron's method. If
$\varphi$ is a regular subsolution of (\ref{HJeps}) for $\rho=0,$
$\varepsilon=0$ on $\overline{\mathcal{G}}$ (i.e. such that
\[
\mathcal{U}^{a}\varphi\left(  y,\gamma^{\prime}\right)  +l_{\gamma^{\prime}%
}\left(  y,a\right)  -\delta\varphi\left(  y,\gamma^{\prime}\right)  \geq0,
\]
for all $\left(  y,\gamma^{\prime},a\right)  \in\widehat{\overline
{\mathcal{G}}\times E\times A}$), then $\delta\varphi\left(  x,\gamma\right)
\leq\Lambda^{\delta,\ast}\left(  x,\gamma\right)  =\delta v^{\delta}\left(
x,\gamma\right)  .$ Since we have exhibited a family ($\left(  v_{\varepsilon
}^{\delta}\left(  x,\gamma\right)  \right)  _{\varepsilon>0}$) converging to
$v^{\delta}\left(  x,\gamma\right)  ,$ it follows that $v^{\delta}$ is the
pointwise supremum over such regular subsolutions, hence giving Perron's
solution to the Hamilton-Jacobi integrodifferential system.

This implies a weak form of uniqueness for our solution. This approach has a
couple of advantages. First, it provides an approximating scheme for the value
function $v^{\delta}$ in the spirit of \cite{krylov_00},
\cite{barles_jakobsen_02} or \cite{BiswasJakobsenKaelsen2010}. However, the
speed of convergence is given by the estimates in Lemma
\ref{Projection_Lemma_eps} and are less explicit than the H\"{o}lder ones
exhibited in the cited papers. Second, having stated the equivalent problem on
a linear space of measures should prove useful for optimality issues (see
\cite{finlay_gaitsgory_lebedev_07} or, more recently,
\cite{Dufour_Stockbridge_12} in a general Markovian framework or \cite{G28} in
a Brownian one).

In a deterministic framework, there is an increasing literature dealing with
stronger forms of uniqueness based on comparison principles. Some of the
papers deal with frameworks similar to ours (e.g.
\cite{AchdouCamilliCutri2013}) and use the geodetic distance in the doubling
variable approach . These results have been generalized and simplified in
\cite{AchdouOudetTchou2013}. Another approach consists in introducing "vertex
test" functions. This allows to treat a generalized quasi-convex case in the
recent preprint \cite{ImbertMoneau2015_Flux}. A nice comparison between the
different notions of solution (corresponding to \cite{AchdouCamilliCutri2013},
\cite{Imbert_Monneau_Zidani} and \cite{Schieborn:2013aa}) is also provided in
\cite{CamilliMarchi2013}. Finally, let us note that, in our setting, the test
functions only need to be substituted in the gradient and, hence, adapting the
comparison methods of the previous papers should work rather smoothly.

With the assumptions of this section (in particular \textbf{(C)), }it follows
that any regular subsolution in the sense of Definition
\ref{Def_more_regular_test} is also a regular subsolution in the sense of
Definition \ref{Def_generalized_solution}. It follows that $v^{\delta}$ cannot
exceed the supremum over regular subsolutions in the sense of Definition
\ref{Def_generalized_solution}. Equality is obtained whenever a classical
comparison principle is available.

\section{ Appendix}

\subsection{Proof of Lemma \ref{Projection_Lemma}}

\begin{proof}
[Proof of Lemma \ref{Projection_Lemma}]We will consider several cases and
prove (i) in each case. We provide the construction for (ii) only in the first
case (a) and hint what is needed for the remaining cases.\qquad

(a) (i) Let us assume that $x=O$. If $y=O,$ then $\mathcal{P}_{O,y}\left(
\alpha\right)  =\alpha.$ Otherwise, we let $t_{y,O}:=\inf\left\{
t\geq0:y_{\gamma}\left(  t;y,a_{\gamma,1}^{-}\right)  =O\right\}  .$
Obviously,
\[
t_{y,O}\leq\frac{\left\vert y\right\vert ^{1-\kappa}}{\left(  1-\kappa\right)
\beta}\leq\frac{\rho_{\varepsilon}^{2}}{\left(  1-\kappa\right)  \beta}.
\]
(These estimates are for the "inactive" case; for the "active" one, one can
consider $\kappa=0$)$.$ For $\varepsilon$ small enough, one can assume,
without loss of generality that $\frac{\rho_{\varepsilon}}{\left(
1-\kappa\right)  \beta}<t_{\varepsilon}.$ We define
\[
\mathcal{P}_{O,y}\left(  \alpha\right)  \left(  t\right)  :=a_{\gamma,1}%
^{-}\mathbf{1}_{\left[  0,t_{y,O}\right]  }\left(  t\right)  +\alpha\left(
t-t_{y,O}\right)  \mathbf{1}_{(t_{y,O},\infty)}\left(  t\right)  ,
\]
for all $t\geq0.$ Then, one gets%
\begin{align*}
\left\vert y_{\gamma}\left(  t;y,\mathcal{P}_{O,y}\left(  \alpha\right)
\right)  -y_{\gamma}\left(  t;O,\alpha\right)  \right\vert  &  \leq\left\vert
y_{\gamma}\left(  t;y,\mathcal{P}_{O,y}\left(  \alpha\right)  \right)
-y\right\vert +\left\vert y\right\vert +\left\vert y\left(  t;O,\alpha\right)
\right\vert \\
&  \leq\left(  \frac{2\left\vert f\right\vert _{0}}{\left(  1-\kappa\right)
\beta}+1\right)  \left\vert y\right\vert ^{1-\kappa}\leq\left(  \frac
{2\left\vert f\right\vert _{0}}{\left(  1-\kappa\right)  \beta}+1\right)
\rho_{\varepsilon}^{2},
\end{align*}
if $t\in\left[  0,t_{y,O}\right]  $ and
\begin{align*}
\left\vert y_{\gamma}\left(  t;y,\mathcal{P}_{O,y}\left(  \alpha\right)
\right)  -y_{\gamma}\left(  t;O,\alpha\right)  \right\vert  &  =\left\vert
y_{\gamma}\left(  t;O,\alpha\right)  -y_{\gamma}\left(  t-t_{y,O}%
;O,\alpha\right)  \right\vert \\
&  \leq\frac{\left\vert f\right\vert _{0}}{\left(  1-\kappa\right)  \beta
}\left\vert y\right\vert ^{1-\kappa}\leq\frac{\left\vert f\right\vert _{0}%
}{\left(  1-\kappa\right)  \beta}\rho_{\varepsilon}^{2},
\end{align*}
if $t>t_{y,O}.$ Moreover, for every $T\geq0,$%
\begin{align*}
&  \left\vert \int_{0}^{T}e^{-\delta t}l_{\gamma}\left(  y_{\gamma}\left(
t;y,\mathcal{P}_{O,y}\left(  \alpha\right)  \right)  ,\mathcal{P}_{O,y}\left(
\alpha\right)  \left(  t\right)  \right)  dt-\int_{0}^{T}e^{-\delta
t}l_{\gamma}\left(  y_{\gamma}\left(  t;O,\alpha\right)  ,\alpha\left(
t\right)  \right)  dt\right\vert \\
&  \leq\int_{0}^{t_{y,O}}e^{-\delta t}\left\vert l_{\gamma}\left(  y_{\gamma
}\left(  t;y,\mathcal{P}_{O,y}\left(  \alpha\right)  \right)  ,\mathcal{P}%
_{O,y}\left(  \alpha\right)  \left(  t\right)  \right)  \right\vert
dt+\int_{0}^{t_{y,O}}e^{-\delta t}\left\vert l_{\gamma}\left(  y_{\gamma
}\left(  t;O,\alpha\right)  ,\alpha\left(  t\right)  \right)  \right\vert dt\\
&  +\mathbf{1}_{T>t_{y,O}}\left(  1-e^{-\delta t_{y,O}}\right)  \int%
_{0}^{T-t_{y,O}}e^{-\delta t}\left\vert l_{\gamma}\left(  y_{\gamma}\left(
t;O,\alpha\right)  ,\alpha\left(  t\right)  \right)  \right\vert dt\\
&  +\mathbf{1}_{T>t_{y,O}}\int_{T-t_{y,O}}^{T}e^{-\delta t}\left\vert
l_{\gamma}\left(  y_{\gamma}\left(  t;O,\alpha\right)  ,\alpha\left(
t\right)  \right)  \right\vert dt\\
&  \leq2\left\vert l\right\vert _{0}\frac{\left\vert y\right\vert ^{1-\kappa}%
}{\left(  1-\kappa\right)  \beta}+\frac{1}{\delta}\left\vert l\right\vert
_{0}\left(  1-e^{-\delta\frac{\left\vert y\right\vert ^{1-\kappa}}{\left(
1-\kappa\right)  \beta}}\right)  +\left\vert l\right\vert _{0}\frac{\left\vert
y\right\vert ^{1-\kappa}}{\left(  1-\kappa\right)  \beta}\\
&  \leq4\left\vert l\right\vert _{0}\frac{\left\vert y\right\vert ^{1-\kappa}%
}{\left(  1-\kappa\right)  \beta}\leq\frac{4\left\vert l\right\vert _{0}%
}{\left(  1-\kappa\right)  \beta}\rho_{\varepsilon}^{2}.
\end{align*}

(ii) If $\alpha\in\mathcal{A}_{ad},$ then we set
\[
\mathcal{P}_{\left(  O,\gamma\right)  }\left(  \alpha\right)  \left(
t;y,\eta\right)  =\left\{
\begin{array}
[c]{c}%
\mathcal{P}_{O,y}\left(  \alpha\left(  t;O,\gamma\right)  \right)  \text{ if
}\eta=\gamma,\left\vert y\right\vert \leq\rho_{\varepsilon}^{\frac{2}%
{1-\kappa}},\\
\alpha\left(  t;y,\eta\right)  ,\text{ otherwise.}%
\end{array}
\right.
\]
One only needs to notice that $y\mapsto t_{y,O}$ is Borel measurable to deduce
that $\mathcal{P}_{O,\gamma}\left(  \alpha\right)  \in\mathcal{A}_{ad}.$ In
the other cases, the construction is similar. We will just hint the
measurability properties needed to insure that the constructed function
$\mathcal{P}_{\left(  x,\gamma\right)  }\left(  \alpha\right)  $ is Borel
measurable in $\left(  t,y,\eta\right)  $.

(b) If $y=O$, we distinguish two cases :

(b1) The road is "inactive". Then, we introduce $t_{x,O}\left(  \alpha\right)
:=\inf\left\{  t>0:y_{\gamma}\left(  t;x,\alpha\right)  =O\right\}  $ and
define, if it is finite%
\[
\mathcal{P}_{x,y}\left(  \alpha\right)  \left(  t\right)  :=a_{\gamma,1}%
^{0}\mathbf{1}_{\left[  0,t_{x,O}\left(  \alpha\right)  \right]  }\left(
t\right)  +\alpha\left(  t\right)  \mathbf{1}_{(t_{x,O}\left(  \alpha\right)
,\infty)}\left(  t\right)  ,
\]
where $a_{\gamma,1}^{0}$ is given by (\textbf{Ab})$.$ Then, due to
(\textbf{Ab}), it is clear that
\[
\left\vert y_{\gamma}\left(  t;y,\mathcal{P}_{x,y}\left(  \alpha\right)
\right)  -y_{\gamma}\left(  t;x,\alpha\right)  \right\vert \leq\left\vert
x-y\right\vert \leq\rho_{\varepsilon}^{2},
\]
if $t\leq t_{x,O}\left(  \alpha\right)  $ and
\[
\left\vert y_{\gamma}\left(  t;y,\mathcal{P}_{x,y}\left(  \alpha\right)
\right)  -y_{\gamma}\left(  t;x,\alpha\right)  \right\vert =0,
\]
otherwise. We note that $y_{\gamma}\left(  t;y,\mathcal{P}_{x,y}\left(
\alpha\right)  \right)  =O$, for $t\leq t_{x,O}\left(  \alpha\right)  .$ Thus,
the assumption (\textbf{Ac}) yields
\begin{align*}
&  \left\vert \int_{0}^{T}e^{-\delta t}l\left(  y_{\gamma}\left(
t;y,\mathcal{P}_{x,y}\left(  \alpha\right)  \right)  ,\mathcal{P}_{x,y}\left(
\alpha\right)  \left(  t\right)  \right)  dt-\int_{0}^{T}e^{-\delta t}l\left(
y_{\gamma}\left(  t;x,\alpha\right)  ,\alpha\left(  t\right)  \right)
dt\right\vert \\
&  \leq\int_{0}^{T}e^{-\delta t}Lip(l)\left\vert x-y\right\vert dt\leq
\frac{Lip(l)}{\delta}\left\vert x-y\right\vert \leq\frac{Lip(l)}{\delta}%
\rho_{\varepsilon}^{2}.
\end{align*}

(b2) The road is "active". Then, we introduce $t_{y,x}:=\inf\left\{
t>0:y_{\gamma}\left(  t;y,a_{\gamma,1}^{+}\right)  =x\right\}  .$ Similar to
(a), one easily proves that $t_{y,x}\leq\frac{\rho_{\varepsilon}^{2}}{\beta}.$
In this case, we define%
\[
\mathcal{P}_{x,y}\left(  \alpha\right)  \left(  t\right)  :=a_{\gamma,1}%
^{+}\mathbf{1}_{\left[  0,t_{y,x}\right]  }\left(  t\right)  +\alpha\left(
t-t_{y,x}\right)  \mathbf{1}_{(t_{y,x},\infty)}\left(  t\right)  ,
\]
and get the same kind of estimates as in (a).

(c) We assume that $x\in J_{1}\cup\left\{  e_{1}\right\}  $ and $y\in J_{1}.$
Then, $\alpha\in\mathcal{A}_{\gamma,x}$ is admissible for $y$ (at least for
some small time). We define $t_{y}^{\ast}\left(  \alpha\right)  =\inf\left\{
t>0:y_{\gamma}\left(  t;y,\alpha\right)  \in\partial J_{1}\right\}  \wedge
\inf\left\{  t>0:y_{\gamma}\left(  t;x,\alpha\right)  =0\right\}  \wedge
t_{\varepsilon}.$ One notices, as before, that $y\mapsto t_{y}^{\ast}\left(
\alpha\right)  $ is Borel measurable.

(c1) If $t_{y}^{\ast}\left(  \alpha\right)  \geq t_{\varepsilon}$, then we let
$\mathcal{P}_{x,y}\left(  \alpha\right)  (t):=\alpha(t)\mathbf{1}_{\left[
0,t_{\varepsilon}\right)  }\left(  t\right)  +\alpha_{0}\left(  t;y_{\gamma
}\left(  t_{\varepsilon};y,\alpha\right)  ,\gamma\right)  \mathbf{1}_{\left[
t_{\varepsilon},\infty\right)  }\left(  t\right)  ,$ where $\alpha_{0}%
\in\mathcal{A}_{ad}$ and have%
\[
\left\vert y_{\gamma}\left(  t;y,\mathcal{P}_{x,y}\left(  \alpha\right)
\right)  -y_{\gamma}\left(  t;x,\alpha\right)  \right\vert \leq e^{Lip(f)t}%
\left\vert x-y\right\vert \leq\sqrt{\left\vert x-y\right\vert }\leq
\rho_{\varepsilon}^{\frac{1}{1-\kappa}},
\]
for all $t\leq t_{\varepsilon}.$ Also, one easily gets, for every $T\leq
t_{\varepsilon},$
\begin{align*}
&  \left\vert \int_{0}^{T}e^{-\delta t}l\left(  y_{\gamma}\left(
t;y,\mathcal{P}_{x,y}\left(  \alpha\right)  \right)  ,\mathcal{P}_{x,y}\left(
\alpha\right)  \left(  t\right)  \right)  dt-\int_{0}^{T}e^{-\delta t}l\left(
y_{\gamma}\left(  t;x,\alpha\right)  ,\alpha\left(  t\right)  \right)
dt\right\vert \\
&  \leq\frac{Lip\left(  l\right)  }{\delta}\sqrt{\left\vert x-y\right\vert
}\leq\frac{Lip\left(  l\right)  }{\delta}\rho_{\varepsilon}^{\frac{1}%
{1-\kappa}}.
\end{align*}
Since $\alpha_{0}\in\mathcal{A}_{ad},$ it follows that $\left(  t,y\right)
\mapsto\mathcal{P}_{x,y}\left(  \alpha\right)  (t)1_{t_{y}^{\ast}\left(
\alpha\right)  \geq t_{\varepsilon}}$ is Borel-measurable.

(c2) If $t_{y}^{\ast}\left(  \alpha\right)  <t_{\varepsilon}$ and $y_{\gamma
}\left(  t_{y}^{\ast}\left(  \alpha\right)  ;y,\alpha\right)  =e_{1}$, then,
in particular, $\left\vert y_{\gamma}\left(  t_{y}^{\ast}\left(
\alpha\right)  ;x,\alpha\right)  -e_{1}\right\vert <\sqrt{\left\vert
x-y\right\vert }\leq\rho_{\varepsilon}^{\frac{1}{1-\kappa}}.$ Of course, this
case is only interesting if $\alpha$ is no longer admissible. In particular,
when $A_{\gamma,e_{1}}\neq A^{\gamma,1}.$ Then, we introduce $t_{e_{1}%
,y_{\gamma}\left(  t_{y}^{\ast}\left(  \alpha\right)  ;x,\alpha\right)
}:=\inf\left\{  t\geq0:y_{\gamma}\left(  t;e_{1},a_{\gamma,1}\right)
=y_{\gamma}\left(  t_{y}^{\ast}\left(  \alpha\right)  ;x,\alpha\right)
\right\}  .$ One has $t_{e_{1},y_{\gamma}\left(  t_{y}^{\ast}\left(
\alpha\right)  ;x,\alpha\right)  }\leq\frac{\sqrt{\left\vert x-y\right\vert }%
}{\beta}.$ We define
\begin{align*}
\mathcal{P}_{x,y}\left(  \alpha\right)  \left(  t\right)   &  :=\alpha\left(
t\right)  \mathbf{1}_{\left[  0,t_{y}^{\ast}\left(  \alpha\right)  \right)
}\left(  t\right)  +a_{\gamma,1}\mathbf{1}_{\left[  t_{y}^{\ast}\left(
\alpha\right)  ,t_{y}^{\ast}\left(  \alpha\right)  +t_{e_{1},y_{\gamma}\left(
t_{y}^{\ast}\left(  \alpha\right)  ;x,\alpha\right)  }\right]  }\left(
t\right) \\
&  +\alpha\left(  t-t_{e_{1},y_{\gamma}\left(  t_{y}^{\ast}\left(
\alpha\right)  ;x,\alpha\right)  }\right)  \mathbf{1}_{\left(  t_{y}^{\ast
}\left(  \alpha\right)  +t_{e_{1},y_{\gamma}\left(  t_{y}^{\ast}\left(
\alpha\right)  ;x,\alpha\right)  },\infty\right)  }\left(  t\right)  .
\end{align*}
The functions $y\mapsto t_{y}^{\ast}\left(  \alpha\right)  ,$ $y\mapsto
y_{\gamma}\left(  t_{y}^{\ast}\left(  \alpha\right)  ;y,\alpha\right)  $ are
Borel measurable. Hence, so is $y\mapsto t_{e_{1},y_{\gamma}\left(
t_{y}^{\ast}\left(  \alpha\right)  ;x,\alpha\right)  }$. It follows that
\[
\left(  t,y\right)  \mapsto\mathcal{P}_{x,y}\left(  \alpha\right)
(t)\mathbf{1}_{t_{y}^{\ast}\left(  \alpha\right)  <t_{\varepsilon},\text{
}y_{\gamma}\left(  t_{y}^{\ast}\left(  \alpha\right)  ;y,\alpha\right)
=e_{1}}%
\]
is also Borel-measurable. One has%
\[
\left\vert y_{\gamma}\left(  t;y,\mathcal{P}_{x,y}\left(  \alpha\right)
\right)  -y_{\gamma}\left(  t;x,\alpha\right)  \right\vert \leq\sqrt
{\left\vert x-y\right\vert },
\]
if $t\leq t_{y}^{\ast}\left(  \alpha\right)  ,$%
\begin{align*}
\left\vert y_{\gamma}\left(  t;y,\mathcal{P}_{x,y}\left(  \alpha\right)
\right)  -y_{\gamma}\left(  t;x,\alpha\right)  \right\vert  &  \leq\left\vert
y_{\gamma}\left(  t-t_{y}^{\ast}\left(  \alpha\right)  ;e_{1},a_{\gamma
,1}\right)  -e_{1}\right\vert +\left\vert e_{1}-y_{\gamma}\left(  t_{y}^{\ast
}\left(  \alpha\right)  ;x,\alpha\right)  \right\vert \\
&  +\left\vert y_{\gamma}\left(  t_{y}^{\ast}\left(  \alpha\right)
;x,\alpha\right)  -y_{\gamma}\left(  t;x,\alpha\right)  \right\vert \\
&  \leq\left(  \frac{2\left\vert f\right\vert _{0}}{\beta}+1\right)
\sqrt{\left\vert x-y\right\vert },
\end{align*}
if $t\in\left[  t_{y}^{\ast}\left(  \alpha\right)  ,t_{y}^{\ast}\left(
\alpha\right)  +t_{e_{1},y_{\gamma}\left(  t_{y}^{\ast}\left(  \alpha\right)
;x,\alpha\right)  }\right]  .$ Finally, if $t>t_{y}^{\ast}\left(
\alpha\right)  +t_{e_{1},y_{\gamma}\left(  t_{y}^{\ast}\left(  \alpha\right)
;x,\alpha\right)  },$ then
\begin{align*}
&  \left\vert y_{\gamma}\left(  t;y,\mathcal{P}_{x,y}\left(  \alpha\right)
\right)  -y_{\gamma}\left(  t;x,\alpha\right)  \right\vert \\
&  =\left\vert
\begin{array}
[c]{c}%
y_{\gamma}\left(  t-t_{y}^{\ast}\left(  \alpha\right)  +t_{e_{1},y_{\gamma
}\left(  t_{y}^{\ast}\left(  \alpha\right)  ;x,\alpha\right)  };y_{\gamma
}\left(  t_{y}^{\ast}\left(  \alpha\right)  ;x,\alpha\right)  ,\mathcal{\alpha
}\left(  t_{y}^{\ast}\left(  \alpha\right)  +\cdot\right)  \right) \\
-y_{\gamma}\left(  t-t_{y}^{\ast}\left(  \alpha\right)  ;y_{\gamma}\left(
t_{y}^{\ast}\left(  \alpha\right)  ;x,\alpha\right)  ,\alpha\left(
t_{y}^{\ast}\left(  \alpha\right)  +\cdot\right)  \right)
\end{array}
\right\vert \\
&  \leq\left\vert f\right\vert _{0}\frac{\sqrt{\left\vert x-y\right\vert }%
}{\beta}.
\end{align*}
Moreover, if $T\leq t_{\varepsilon},$ one gets (similar to (a)),
\begin{align*}
&  \left\vert \int_{0}^{T}e^{-\delta t}l\left(  y_{\gamma}\left(
t;y,\mathcal{P}_{x,y}\left(  \alpha\right)  \right)  ,\mathcal{P}_{x,y}\left(
\alpha\right)  \left(  t\right)  \right)  dt-\int_{0}^{T}e^{-\delta t}l\left(
y_{\gamma}\left(  t;x,\alpha\right)  ,\alpha\left(  t\right)  \right)
dt\right\vert \\
&  \leq\int_{0}^{t_{y}^{\ast}\left(  \alpha\right)  }e^{-\delta t}Lip\left(
l\right)  \sqrt{\left\vert x-y\right\vert }dt+\frac{4\left\vert l\right\vert
_{0}}{\beta}\sqrt{\left\vert x-y\right\vert }.
\end{align*}

(c3) The case $t_{y}^{\ast}\left(  \alpha\right)  <t_{\varepsilon}$ and
$y_{\gamma}\left(  t_{y}^{\ast}\left(  \alpha\right)  ;y,\alpha\right)  =O:$
In particular, one gets $\left\vert y_{\gamma}\left(  t_{y}^{\ast}\left(
\alpha\right)  ;x,\alpha\right)  \right\vert \leq\sqrt{\left\vert
x-y\right\vert }\leq\rho_{\varepsilon}^{\frac{1}{1-\kappa}}.$

(c3.1) In the "active case", we consider $t_{O,y_{\gamma}\left(  t_{y}^{\ast
}\left(  \alpha\right)  ;x,\alpha\right)  }=\inf\left\{  t>0:y_{\gamma}\left(
t;O,a_{\gamma,1}^{+}\right)  =y_{\gamma}\left(  t_{y}^{\ast}\left(
\alpha\right)  ;x,\alpha\right)  \right\}  $ and define
\begin{align*}
\mathcal{P}_{x,y}\left(  \alpha\right)  \left(  t\right)   &  :=\alpha\left(
t\right)  \mathbf{1}_{\left[  0,t_{y}^{\ast}\left(  \alpha\right)  \right)
}\left(  t\right)  +a_{\gamma,1}^{+}\mathbf{1}_{\left[  t_{y}^{\ast}\left(
\alpha\right)  ,t_{y}^{\ast}\left(  \alpha\right)  +t_{O,y_{\gamma}\left(
t_{y}^{\ast}\left(  \alpha\right)  ;x,\alpha\right)  }\right]  }\left(
t\right) \\
&  +\alpha\left(  t-t_{O,y_{\gamma}\left(  t_{y}^{\ast}\left(  \alpha\right)
;x,\alpha\right)  }\right)  \mathbf{1}_{\left(  t_{y}^{\ast}\left(
\alpha\right)  +t_{O,y_{\gamma}\left(  t_{y}^{\ast}\left(  \alpha\right)
;x,\alpha\right)  },\infty\right)  }\left(  t\right)  .
\end{align*}
One gets the same estimates (and measurability properties) as in (c2).

(c3.2) The "inactive case" is similar to (b1\textbf{).} We consider
\begin{align*}
\mathcal{P}_{x,y}\left(  \alpha\right)  \left(  t\right)   &  :=\alpha\left(
t\right)  \mathbf{1}_{\left[  0,t_{y}^{\ast}\left(  \alpha\right)  \right)
}\left(  t\right)  +a_{\gamma,1}^{0}\mathbf{1}_{\left[  t_{y}^{\ast}\left(
\alpha\right)  ,t_{y}^{\ast}\left(  \alpha\right)  +t_{y_{\gamma}\left(
t_{y}^{\ast}\left(  \alpha\right)  ;x,\alpha\right)  ,O}\right]  }\left(
t\right) \\
&  +\alpha\left(  t-t_{y_{\gamma}\left(  t_{y}^{\ast}\left(  \alpha\right)
;x,\alpha\right)  ,O}\right)  \mathbf{1}_{\left(  t_{y}^{\ast}\left(
\alpha\right)  +t_{y_{\gamma}\left(  t_{y}^{\ast}\left(  \alpha\right)
;x,\alpha\right)  ,O},\infty\right)  }\left(  t\right)  ,
\end{align*}
for all $t\geq0.$ The functions $y\mapsto t_{y}^{\ast}\left(  \alpha\right)
,$ $y\mapsto y_{\gamma}\left(  t_{y}^{\ast}\left(  \alpha\right)
;x,\alpha\right)  $ are Borel measurable. Hence, so is $y\mapsto t_{y_{\gamma
}\left(  t_{y}^{\ast}\left(  \alpha\right)  ;x,\alpha\right)  ,O}\left(
a_{\gamma,1}^{0}\right)  $. It follows that
\[
\left(  t,y\right)  \mapsto\mathcal{P}_{x,y}\left(  \alpha\right)
(t)\mathbf{1}_{t_{y}^{\ast}\left(  \alpha\right)  <t_{\varepsilon},\text{
}y_{\gamma}\left(  t_{y}^{\ast}\left(  \alpha\right)  ;y,\alpha\right)  =O}%
\]
is also Borel-measurable.

One easily notices that
\[
\left\vert y_{\gamma}\left(  t;y,\mathcal{P}_{x,y}\left(  \alpha\right)
\right)  -y_{\gamma}\left(  t;x,\alpha\right)  \right\vert \leq\sqrt
{\left\vert x-y\right\vert }\leq\rho_{\varepsilon},\text{ if }0\leq t\leq
t_{y}^{\ast}\left(  \alpha\right)  +t_{y_{\gamma}\left(  t_{y}^{\ast}\left(
\alpha\right)  ;x,\alpha\right)  ,O},
\]
and $y_{\gamma}\left(  t;y,\mathcal{P}_{x,y}\left(  \alpha\right)  \right)
=y_{\gamma}\left(  t;x,\alpha\right)  $ if $t>t_{y}^{\ast}\left(
\alpha\right)  +t_{y_{\gamma}\left(  t_{y}^{\ast}\left(  \alpha\right)
;x,\alpha\right)  ,O}.$ Using the assumption (\textbf{Ac}) on $\left[
t_{y}^{\ast}\left(  \alpha\right)  ,t_{y}^{\ast}\left(  \alpha\right)
+t_{y_{\gamma}\left(  t_{y}^{\ast}\left(  \alpha\right)  ;x,\alpha\right)
,O}\right]  $, one gets%
\begin{align*}
&  \left\vert \int_{0}^{T}e^{-\delta t}l\left(  y_{\gamma}\left(
t;y,\mathcal{P}_{x,y}\left(  \alpha\right)  \right)  ,\mathcal{P}_{x,y}\left(
\alpha\right)  \left(  t\right)  \right)  dt-\int_{0}^{T}e^{-\delta t}l\left(
y_{\gamma}\left(  t;x,\alpha\right)  ,\alpha\left(  t\right)  \right)
dt\right\vert \\
&  \leq\int_{0}^{t_{y}^{\ast}\left(  \alpha\right)  }e^{-\delta t}Lip\left(
l\right)  \sqrt{\left\vert x-y\right\vert }dt+\int_{t_{y}^{\ast}\left(
\alpha\right)  }^{\left(  t_{y}^{\ast}\left(  \alpha\right)  +t_{y_{\gamma
}\left(  t_{y}^{\ast}\left(  \alpha\right)  ;x,\alpha\right)  ,O}\right)
\wedge T}e^{-\delta t}Lip(l)\sqrt{\left\vert x-y\right\vert }dt\\
&  \leq\frac{1}{\delta}Lip(l)\sqrt{\left\vert x-y\right\vert }.
\end{align*}

(c4) If $t_{y}^{\ast}\left(  \alpha\right)  <t_{\varepsilon}$ and $y_{\gamma
}\left(  t_{y}^{\ast}\left(  \alpha\right)  ;x,\alpha\right)  =O,$ then we
proceed as in (a). We let
\[
t_{y_{\gamma}\left(  t_{y}^{\ast}\left(  \alpha\right)  ;y,\alpha\right)
,O}:=\inf\left\{  t\geq0:y_{\gamma}\left(  t;y_{\gamma}\left(  t_{y}^{\ast
}\left(  \alpha\right)  ;y,\alpha\right)  ,a_{\gamma,1}^{-}\right)
=O\right\}  .
\]
Obviously, $t_{y_{\gamma}\left(  t_{y}^{\ast}\left(  \alpha\right)
;y,\alpha\right)  ,O}\leq\frac{\sqrt{\left\vert x-y\right\vert }^{1-\kappa}%
}{\left(  1-\kappa\right)  \beta}.$ We set
\begin{align*}
\mathcal{P}_{x,y}\left(  \alpha\right)  \left(  t\right)   &  :=\alpha\left(
t\right)  \mathbf{1}_{\left[  0,t_{y}^{\ast}\left(  \alpha\right)  \right)
}\left(  t\right)  +a_{\gamma,1}^{-}\mathbf{1}_{\left[  t_{y}^{\ast}\left(
\alpha\right)  ,t_{y}^{\ast}\left(  \alpha\right)  +t_{y_{\gamma}\left(
t_{y}^{\ast}\left(  \alpha\right)  ;y,\alpha\right)  ,O}\right]  }\left(
t\right) \\
&  +\alpha\left(  t-t_{y_{\gamma}\left(  t_{y}^{\ast}\left(  \alpha\right)
;y,\alpha\right)  ,O}\right)  \mathbf{1}_{\left(  t_{y}^{\ast}\left(
\alpha\right)  +t_{y_{\gamma}\left(  t_{y}^{\ast}\left(  \alpha\right)
;y,\alpha\right)  ,O},\infty\right)  }\left(  t\right)  ,
\end{align*}
for all $t\geq0$ and the estimates follow. The measurability properties hold
as before.

(d) Finally, we assume that $y=e_{1}.$ Again, we only modify $\alpha$ if
$A_{\gamma,e_{1}}\neq A^{\gamma,1}$. In this eventuality, we define
$t_{e_{1},x}:=\inf\left\{  t\geq0:y_{\gamma}\left(  t;y,a_{\gamma,1}\right)
=x\right\}  ,$ where $a_{\gamma,1}$ appears in (\textbf{Aa})$.$ Then
$t_{e_{1},x}\leq\frac{\left\vert x-y\right\vert }{\beta}.$ We let%
\[
\mathcal{P}_{x,e_{1}}\left(  \alpha\right)  \left(  t\right)  :=a_{\gamma
,1}\mathbf{1}_{\left[  0,t_{e_{1},x}\right]  }\left(  t\right)  +\alpha\left(
t-t_{e_{1},x}\right)  \mathbf{1}_{\left(  t_{e_{1},x},\infty\right)  }\left(
t\right)  .
\]
and get the conclusion.

The proof of our lemma is now complete.
\end{proof}

\subsection{Proof of Lemma \ref{Projection_Lemma_eps}.}

For any $y\in\lbrack O,(1+\varepsilon)e_{i}]$ (with $\gamma\in E_{i}^{active}%
$), we set
\[
a_{\gamma,i}^{\mathrm{opt},+}(y)=\underset{a\in A_{\gamma,y}}{\mathrm{argmax}%
}\langle f_{\gamma}\left(  y,a\right)  ,e_{i}\rangle.
\]
It is clear that
\begin{equation}
\left.
\begin{array}
[c]{l}%
\left\langle f_{\gamma}\left(  y^{\prime},a\right)  -f_{\gamma}\left(
y,a_{\gamma,i}^{\mathrm{opt},+}(y)\right)  ,e_{i}\right\rangle \leq
\underset{a^{\prime}\in A_{\gamma,e_{i}}}{\sup}\left\vert f_{\gamma}\left(
y^{\prime},a^{\prime}\right)  -f_{\gamma}\left(  y,a^{\prime}\right)
\right\vert \leq Lip\left(  f\right)  \left\vert y^{\prime}-y\right\vert ,\\
\left\langle f_{\gamma}\left(  y,a\right)  -f_{\gamma}\left(  y,a_{\gamma
,i}^{\mathrm{opt},+}(y)\right)  ,e_{i}\right\rangle \leq0,
\end{array}
\right.  \label{estimatesArgmax}%
\end{equation}
for all $y,y^{\prime}\in\lbrack O,(1+\varepsilon)e_{i}]$. We also let
\[
d_{geo}\left(  x,y\right)  :=\left\{
\begin{array}
[c]{c}%
\left\vert x-y\right\vert ,\text{ if }x,y\in\left[  -1-\varepsilon
,1+\varepsilon\right]  e_{i},\\
\left\vert x\right\vert +\left\vert y\right\vert ,\text{ if }x\in\left[
-1-\varepsilon,1+\varepsilon\right]  e_{i},y\in\left[  -1-\varepsilon
,1+\varepsilon\right]  e_{j},\text{ }i\neq j
\end{array}
\right.  .
\]

\begin{proof}
[Proof of Lemma \ref{Projection_Lemma_eps}.]We will prove only the estimates
on the trajectory. The estimates on the partial cost follow from the
construction $\mathcal{P}_{x}\left(  \alpha\right)  $ which coincides with
$\alpha$ except at the end points (where (\textbf{C}) applies; see also the
similar condition (Ac) and the proof of Lemma \ref{Projection_Lemma}). The
assertion (ii) follows similar patterns to Lemma \ref{Projection_Lemma}.

We aim at constructing $\tilde{\alpha}:=\mathcal{P}_{x}\left(  \alpha\right)
.$ We let $r_{0}\leq\varepsilon$ (to be specified later on). We can assume,
without loss of generality, that $x\neq O.$ (Should this not be the case, see
Case 3). Then $\alpha$ is locally admissible. We set
\[
\tau_{0}:=\inf\left\{  t\geq0~:~d_{geo}\left(  y_{\gamma}(t;x,\alpha
),{y_{\gamma}^{\rho_{\varepsilon}}\left(  t;x,\overline{\alpha}\right)
}\right)  \geq r_{0}\right\}  .
\]
If $\tau_{0}\geq t_{\varepsilon},$ the conclusion follows. Otherwise, the time
where $y_{\gamma}$ meets again our target $y^{\rho_{\varepsilon}}$ will be
referred to as \textquotedblleft renewal time\textquotedblright. We give the
construction of $\tilde{\alpha}$ on $[\tau_{0},t_{\varepsilon}]$ prior to
renewal time. We let $\tau_{O}^{\varepsilon}$ be the exit time of the target
from the branch,
\[
\tau_{O}^{\varepsilon}=\inf\left\{  t\geq\tau_{0}~:~{y_{\gamma}^{\rho
_{\varepsilon}}\left(  t;x,\overline{\alpha}\right)  }=O\right\}  .
\]
(Hence, $\tau_{O}^{\varepsilon}>\tau_{0}$). Let us assume that $\tilde{\alpha
}$ has been constructed up to some time $\tau_{0}\leq t^{\ast}\leq\tau
_{O}^{\varepsilon}$ before the renewal time such that
\begin{equation}
d_{geo}\left(  y_{\gamma}^{\ast},y_{\gamma}^{\rho_{\varepsilon},\ast}\right)
\leq\omega_{\varepsilon}\left(  t^{\ast},r_{0}\right)  , \tag{R}\label{R}%
\end{equation}
where we used the notation $y_{\gamma}^{\ast}=y_{\gamma}\left(  t^{\ast
};x,\widetilde{\alpha}\right)  $ and $y_{\gamma}^{\rho_{\varepsilon},\ast
}=y_{\gamma}^{\rho_{\varepsilon}}\left(  t^{\ast};x,\overline{\alpha}\right)
$. Even if this is not crucial for the rest of the proof, remark that renewal
cannot occur before $\tau_{0}+\frac{r_{0}}{2|f|_{0}}$, so that this iterative
procedure will be applied only a finite number of times.

\textbf{Case 1: } $y_{\gamma}$ and $y_{\gamma}^{\rho_{\varepsilon}}$ are on
the same branch (say $\left[  O,(1+\varepsilon)e_{1}\right]  ;$ the case when
$y_{\gamma}$ and $y^{\rho_{\varepsilon}}$ are on a "new" branch $\left[
O,-\varepsilon e_{1}\right]  $ is similar), and $y_{\gamma}$ lies between the
junction $O$ and $y_{\gamma}^{\rho_{\varepsilon}}$ (i.e. $0\leq\langle
y_{\gamma}^{\ast},e_{1}\rangle<\langle y_{\gamma}^{\rho_{\varepsilon},\ast
},e_{1}\rangle$). We let
\begin{align*}
t_{out}  &  =\inf\left\{  t\geq0:y_{\gamma}\left(  t;y_{\gamma}^{\ast}%
,\alpha(t^{\ast}+\cdot)\right)  =\left(  1+\varepsilon\right)  e_{1}\right\}
,\;\;t_{out}^{\rho_{\varepsilon}}=\inf\left\{  t\geq0:y_{\gamma}%
^{\rho_{\varepsilon}}\left(  t;y_{\gamma}^{{\rho_{\varepsilon}},\ast
},\overline{\alpha}(t^{\ast}+\cdot)\right)  =\left(  1+\varepsilon\right)
e_{1}\right\}  ,\\
t_{0}^{\rho_{\varepsilon}}  &  =\inf\left\{  t\geq0:y_{\gamma}^{\rho
_{\varepsilon}}\left(  t;y_{\gamma}^{\rho_{\varepsilon,\ast}},\overline
{\alpha}(t^{\ast}+\cdot)\right)  =O\right\}  ,\;\;t_{0}=\inf\left\{
t\geq0:y_{\gamma}\left(  t;y_{\gamma}^{\ast},\alpha(t^{\ast}+\cdot)\right)
=O\right\}  .
\end{align*}
Let us introduce $t_{act}=\min\left(  t_{out},t_{out}^{\rho_{\varepsilon}%
},t_{0},t_{0}^{\rho_{\varepsilon}}\right)  $. Obviously, prior to the renewal
time$,$ only $t_{0}$ is relevant (since $t_{out},t_{0}^{\rho_{\varepsilon}}$
cannot occur without renewal and if $t_{out}^{\rho_{\varepsilon}}<t_{0},$ then
$\alpha$ is still locally admissible for the follower $y_{\gamma}$). We
distinguish between the cases

(a1) If $t_{act}>0$, we extend $\tilde{\alpha}$ by setting $\tilde{\alpha
}(t)=\alpha\left(  t\right)  $, if $t^{\ast}<t\leq t^{\ast}+t_{act}.$
Gronwall's inequality yields
\[
\left\vert y_{\gamma}\left(  t;x,\tilde{\alpha}\right)  -y_{\gamma}%
^{\rho_{\varepsilon}}\left(  t;x,\overline{\alpha}\right)  \right\vert
\leq\omega_{\varepsilon}\left(  t-t^{\ast};|y_{\gamma}^{\ast}-y_{\gamma}%
^{\rho_{\varepsilon},\ast}|\right)  ,
\]
for all $t^{\ast}<t\leq t^{\ast}+t_{act}$.

(a2) If $t_{act}=t_{0}=0$, then we necessarily have that $t_{0}^{\rho
_{\varepsilon}}>0$. In this case $y_{\gamma}^{\ast}=O$ and $\langle y_{\gamma
}^{\rho_{\varepsilon}}\left(  t^{\ast};x,\overline{\alpha}\right)
,e_{1}\rangle>0$.

(a2.1) The active case (by far the most complicated) $\gamma\in E_{1}%
^{active}$. In order to simplify our notations, denote, in this case,
$a_{\gamma,O}^{+}=a_{\gamma,1}^{\mathrm{opt},+}(O)$. We introduce
\begin{align*}
{t}_{control}  &  =\inf\{t>0~:~y_{\gamma}\left(  t;,y_{\gamma}^{\ast
},a_{\gamma,O}^{+}\right)  =r_{\varepsilon}^{\prime}e_{1}\}\\
{t}_{collision}  &  =\inf\{t>0~:~y_{\gamma}\left(  t;,y_{\gamma}^{\ast
},a_{\gamma,O}^{+}\right)  =y_{\gamma}^{\rho_{\varepsilon}}\left(  t^{\ast
}+t;,y_{\gamma}^{\rho_{\varepsilon},\ast},\overline{\alpha}(t^{\ast}%
+\cdot)\right)  \}
\end{align*}
Note that because of the continuity of the trajectories and since
$r_{\varepsilon}^{\prime}>0$, we have ${t}_{control}>0$ and ${t}%
_{collision}>0$. We extend naturally $\tilde{\alpha}$ by setting
\[
\tilde{\alpha}\left(  t+t^{\ast}\right)  =a_{\gamma,O}^{+}\text{, if }%
t\in\left(  0,{t}_{collision}\wedge{t}_{control}\right]  .
\]
With this extension, our assumptions guarantee that $\langle y_{\gamma}\left(
t+t^{\ast};x,\tilde{\alpha}\right)  ,e_{1}\rangle\geq Lip(f)\beta>0$ and the
junction $O$ is now a reflecting barrier for $t\mapsto y_{\gamma}(t;y_{\gamma
}^{\ast},a_{\gamma,O}^{+}).$ Note also that for any $t\leq{t}_{collision}%
\wedge{t}_{control}$, we have $\langle y_{\gamma}^{\rho_{\varepsilon}}\left(
t+t^{\ast};x,\overline{\alpha}\right)  ,e_{1}\rangle>0$. For every $0<t\leq
t_{collision}\wedge t_{control}$, one uses (\ref{estimatesArgmax}) to get
\begin{align*}
&  \left\vert y_{\gamma}^{\rho_{\varepsilon}}\left(  t+t^{\ast};x,\overline
{\alpha}\right)  -y_{\gamma}\left(  t+t^{\ast};x,\tilde{\alpha}\right)
\right\vert =\left\langle y_{\gamma}^{\rho_{\varepsilon}}\left(  t;y_{\gamma
}^{\rho_{\varepsilon},\ast},\overline{\alpha}\left(  t^{\ast}+\cdot\right)
\right)  -y_{\gamma}\left(  t;y_{\gamma}^{\ast},a_{\gamma,O}^{+}\right)
,e_{1}\right\rangle \\
&  =\langle(y_{\gamma}^{\rho_{\varepsilon},\ast}-y_{\gamma}^{\ast}%
),e_{1}\rangle+\int_{0}^{t}\left\langle f_{\gamma}^{\rho_{\varepsilon}}\left(
y_{\gamma}^{\rho_{\varepsilon}}\left(  s;y_{\gamma}^{\rho_{\varepsilon},\ast
},\overline{\alpha}\left(  t^{\ast}+\cdot\right)  \right)  ,\overline{\alpha
}\left(  t^{\ast}+\cdot\right)  \right)  -f_{\gamma}\left(  y_{\gamma}\left(
s;y_{\gamma}^{\ast},a_{\gamma,O}^{+}\right)  ,a_{\gamma,O}^{+}\right)
,e_{1}\right\rangle ds\\
&  \leq\langle(y_{\gamma}^{\rho_{\varepsilon},\ast}-y_{\gamma}^{\ast}%
),e_{1}\rangle+\int_{0}^{t}Lip\left(  f\right)  \left(  \rho_{\varepsilon
}+\left\vert y_{\gamma}^{\rho_{\varepsilon}}\left(  s+t^{\ast};x,\overline
{\alpha}\right)  -y_{\gamma}\left(  s+t^{\ast};x,\tilde{\alpha}\right)
\right\vert \right)  ds\\
&  +\int_{0}^{t}\left[  \left\langle
\begin{array}
[c]{c}%
f_{\gamma}\left(  y_{\gamma}\left(  s;y_{\gamma}^{\ast},a_{\gamma,O}%
^{+}\right)  ,\alpha\left(  t^{\ast}+\cdot\right)  \right)  -f_{\gamma}\left(
O,a_{\gamma,O}^{+}\right) \\
+f_{\gamma}\left(  O,a_{\gamma,O}^{+}\right)  -f_{\gamma}\left(  y_{\gamma
}\left(  s;y_{\gamma}^{\ast},a_{\gamma,O}^{+}\right)  ,a_{\gamma,O}%
^{+}\right)
\end{array}
,e_{1}\right\rangle \right]  ds\\
&  \leq\left\vert y_{\gamma}^{\rho_{\varepsilon},\ast}-y_{\gamma}^{\ast
}\right\vert +Lip\left(  f\right)  \left[  \left(  \rho_{\varepsilon
}+2r_{\varepsilon}^{\prime}\right)  t+\int_{0}^{t}\left\vert y_{\gamma}%
^{\rho_{\varepsilon}}\left(  s+t^{\ast};x,\overline{\alpha}\right)
-y_{\gamma}\left(  s+t^{\ast};x,\tilde{\alpha}\right)  \right\vert ds\right]
.
\end{align*}
Using Gronwall's inequality and our assumptions on $r_{\varepsilon}^{\prime}$,
we deduce that for any $0<t\leq t_{control}\wedge t_{collision}$,
\[
\left\vert y_{\gamma}\left(  t+t^{\ast};x,\tilde{\alpha}\right)  -y_{\gamma
}^{\rho_{\varepsilon}}\left(  t+t^{\ast};x,\overline{\alpha}\right)
\right\vert \leq\omega_{\varepsilon}\left(  t;\left\vert y_{\gamma}%
^{\rho_{\varepsilon},\ast}-y_{\gamma}^{\ast}\right\vert \right)  .
\]
Thus, we have constructed an extension of $t\mapsto\tilde{\alpha}(t)$
satisfying $(R)$ during an increment of some strictly positive time
$t_{control}\wedge t_{collision}$.

(a2.2) In the inactive case, it suffices to continue with the control $\alpha$
(since, in this case, $f_{\gamma}\left(  O,a\right)  =0,$ for all $a\in
A^{\gamma,1}$) up till $t_{collision}$ (or $t_{\varepsilon}$).

\textbf{Case 2}~:~ We use the same notations as in the first case and aim at
giving the control when $\tilde{\alpha}$ has been constructed up to some time
$\tau_{0}\leq t^{\ast}\leq\tau_{O}^{\varepsilon}$ such that renewal does not
occur at $t^{\ast}$ and both motions are at time $t^{\ast}$ on the same active
branch (say $\left[  O,(1+\varepsilon)e_{1}\right]  $). Contrary to Case 1, in
this case we are assuming that $0<\langle y_{\gamma}^{\rho_{\varepsilon},\ast
},e_{1}\rangle<\langle y_{\gamma}^{\ast},e_{1}\rangle$. We distinguish the
following cases

(b1) If $t_{act}>0$. In this case we proceed exactly as in case (a1) and get
the same conclusion.

(b2) If $t_{act}=t_{out}=0$ then $y_{\gamma}^{\ast}=(1+\varepsilon)e_{1}$ and
we have $t_{out}^{\rho_{\varepsilon}}>0$. This case is completely symmetric to
case (a2.1) but with motions starting at $t^{\ast}$ near $(1+\varepsilon
)e_{1}$. The conclusion is similar.

(The case when $y_{\gamma}^{\ast}=-\varepsilon e_{1}$ is similar to (a2.1) if
$\gamma\in E_{1}^{active}$ and to (a2.2) in the inactive case.)

\textbf{Case 3}~:~control when $y_{\gamma}^{\rho_{\varepsilon}}\left(
t^{\ast};x,\overline{\alpha}\right)  \in\left[  O,(1+\varepsilon)e_{j}\right]
$ and $y_{\gamma}\left(  t^{\ast};x,\alpha^{\ast}\right)  \in\left[
O,(1+\varepsilon)e_{i}\right]  $ with $i\neq j$. In particular, the two points
may be at the intersection or the target is at the intersection and the
follower is not. We can assume, without loss of generality, that $\gamma\in
E_{j}^{active}$. (Otherwise, recalling that we start at the same initial
point, this situation can only happen if $y_{\gamma}^{\rho_{\varepsilon},\ast
}=O$ and no active branch exists. Then, whatever the control, $y_{\gamma}$ can
only get closer to $O.$) In this case, we introduce
\begin{align*}
\hat{t}_{O}  &  =\inf\{t>0~:~y_{\gamma}\left(  t;y_{\gamma}^{\ast}%
,a_{\gamma,i}^{-}\right)  =O\}\\
\hat{t}_{collision}  &  =\inf\{t>0~:~y_{\gamma}\left(  t;y_{\gamma}^{\ast
},a_{\gamma,i}^{-}\right)  =y_{\gamma}^{\rho_{\varepsilon}}\left(  t^{\ast
}+t;x,\overline{\alpha}\right)  \}
\end{align*}
and we extend $t\mapsto\tilde{\alpha}\left(  t\right)  $ up to time $t^{\ast
}+\hat{t}_{O}\wedge\hat{t}_{collision}$ by setting
\[
\tilde{\alpha}(t)=a_{\gamma,i}^{-}\text{, for }t^{\ast}<t<t^{\ast}+\hat{t}%
_{O}\wedge\hat{t}_{collision}.
\]
Since by assumption $d_{geo}\left(  y_{\gamma}^{\ast},y_{\gamma}%
^{\rho_{\varepsilon},\ast}\right)  \leq\omega_{\varepsilon}(t^{\ast};r_{0})$,
we have that
\[
0<\hat{t}_{O}\wedge\hat{t}_{collision}\leq\frac{\left(  \omega_{\varepsilon
}(t^{\ast};r_{0})\right)  ^{1-\kappa}}{\left(  1-\kappa\right)  \beta}.
\]
Hence, with such a construction we have that
\[
d_{geo}\left(  y_{\gamma}\left(  t;y_{\gamma}^{\ast},\tilde{\alpha}\right)
,y_{\gamma}^{\rho_{\varepsilon}}\left(  t;y_{\gamma}^{\rho_{\varepsilon},\ast
},\overline{\alpha}\right)  \right)  |\leq\left(  \frac{|f|_{0}}{\left(
1-\kappa\right)  \beta}+1\right)  \left(  \omega_{\varepsilon}(t^{\ast}%
;r_{0})\right)  ^{1-\kappa},
\]
for all $t<\hat{t}_{O}\wedge\hat{t}_{collision}$. If $\hat{t}_{O}=\hat{t}%
_{O}\wedge\hat{t}_{collision}$, we arrive at $y_{\gamma}\left(  \hat{t}%
_{O};y_{\gamma}^{\ast},\tilde{\alpha}\right)  =O$. If every road is inactive,
we continue to stay at $O.$

(c1) If $y_{\gamma}^{\rho_{\varepsilon}}\left(  \hat{t}_{O};y_{\gamma}%
^{\rho_{\varepsilon},\ast},\overline{\alpha}\right)  \neq O$ we are back to
case 1 but with $r_{0}$ now replaced by $r_{0}^{\prime}$ lower than $\left(
\frac{|f|_{0}}{\left(  1-\kappa\right)  \beta}+1\right)  \left(
\omega_{\varepsilon}(t^{\ast};r_{0})\right)  ^{1-\kappa}$~:~even if there has
been a deterioration of the distance between $y_{\gamma}$ and $y^{\rho
_{\varepsilon}}$ (not exceeding $\left(  \frac{|f|_{0}}{\left(  1-\kappa
\right)  \beta}+1\right)  \left(  \omega_{\varepsilon}(t^{\ast};r_{0})\right)
^{1-\kappa}$ because we are back to case 1, the situation of case 3 (and also
the situation of (b2)) will never happen before some renewal time occurs.
Consequently, in the situation of case 3 we are always allowed to take in
$(R)$ the same value for $r_{0}$ (and we choose $r_{0}=r_{\varepsilon}%
^{\prime}$).

(c2) Finally, we assume $y_{\gamma}^{\rho_{\varepsilon}}\left(  \hat{t}%
_{O};y_{\gamma}^{\rho_{\varepsilon},\ast},\overline{\alpha}\right)  =O$. If
every road is inactive, then $y_{\gamma}$ stays at $O$ and $y_{\gamma}%
^{\rho_{\varepsilon}}$ cannot go further than $\rho_{\varepsilon}$. Otherwise,
let us assume that some $j^{\prime}$ is active. Then, we take $\tilde{\alpha
}\left(  t\right)  =a_{\gamma,j^{\prime}}^{+}$ for some very small (yet
strictly positive) time $t^{\ast}+\hat{t}_{O}<t\leq t^{\ast}+\hat{t}_{O}%
+\frac{r_{\varepsilon}^{\prime}}{2\left\vert f\right\vert _{0}}$ and get
\[
d_{geo}\left(  y_{\gamma}^{\rho_{\varepsilon}}\left(  t;x,\overline{\alpha
}\right)  ,y_{\gamma}\left(  t;x,\widetilde{\alpha}\right)  \right)  \leq
r_{\varepsilon}^{\prime},
\]
which allows one to iterate.

\textbf{Conclusion} Gathering all these results together, the constructed
strategy $\tilde{\alpha}$ is such that
\[
\left\vert y_{\gamma}\left(  t;x,\tilde{\alpha}\right)  -y_{\gamma}%
^{\rho_{\varepsilon}}\left(  t;x,\overline{\alpha}\right)  \right\vert
\leq\omega_{\varepsilon}(t_{\varepsilon};\Phi(\varepsilon)).
\]
for any $t\leq t_{\varepsilon}$ and the lemma is proved.
\end{proof}

\subsection{Some hints on the proof of Lemma \ref{Projection_Lemma_eps_Partii}%
}

The reader is invited to note that, if (C) holds true, then $l\left(
y,a\right)  =l\left(  \Pi_{\overline{\mathcal{G}}}\left(  y\right)  ,a\right)
,$for all $y\in\overline{\mathcal{G}}^{+,\varepsilon}$. Hence, the same kind
of cost can be reached by :

- hurrying to $O$ when the target is at $O,$ then wait for collision by

\qquad- staying at $O$ when the target enters a fictive road from the
intersection if a control $a$ such that $f\left(  O,a\right)  =0$ exists (for
example, in the inactive case).

\qquad- or mimic staying at $O$ by making very small trips (see case (c2) of
the previous Lemma);

- at $e_{1}:$

\qquad- if $\left\langle f\left(  e_{1},a\right)  ,e_{1}\right\rangle \leq0,$
for all $a,$ we are done, since the target will never enter $\left(
1,1+\varepsilon\right]  e_{1}$ (recall we start from $\overline{\mathcal{G}%
}).$

\qquad- otherwise, there exists $\left\langle f\left(  e_{1},\widetilde{a}%
\right)  ,e_{1}\right\rangle >\beta^{\prime}>0$ and, by our assumption, we
also have $\left\langle f\left(  e_{1},a_{\gamma,1}\right)  ,e_{1}%
\right\rangle <-\beta.$ Then, again, we mimic staying at $e_{1}$ by making
very small trips until collision.

The same kind of assertion are valid for $\lambda$ and $Q$ (notice the
definition of these terms on "fictive" roads). The trajectories around $O$ are
close due to the $\varepsilon$ distance from $\overline{\mathcal{G}%
}^{+,\varepsilon}$ to $\overline{\mathcal{G}}$ and as in the previous
argument, coming around the intersection can only occur once before collision.

\bibliographystyle{plain}
\bibliography{bibliografie_06092014}

\end{document}